\newcommand{\Rspace}        	{{\mathbb R}}
\newcommand{\Zspace}        	{{\mathbb Z}}
\newcommand{\Sspace}        	{{\mathbb S}}
\newcommand{\st}			{\mathsf{st}}
\newcommand{\cl}			{\mathsf{cl}}
\newcommand{\epi}			{\twoheadrightarrow}
\newcommand{\mono}		{\hookrightarrow}
\newcommand{\Open}         	{\mathsf{Open}}
\newcommand{\COpen}         	{\mathsf{COpen}}
\newcommand{\Ab}          		{\mathsf{Ab}}
\newcommand{\Sheaf}          	{{\mathsf{Sh}}}
\newcommand{\Loc}          	{{\mathsf{Loc}}}
\newcommand{\Cosheaf}          	{{\mathsf{Cosh}}}
\newcommand{\Coloc}          	{{\mathsf{Coloc}}}
\newcommand{\Bisheaf}          	{{\mathsf{Bish}}}
\newcommand{\Stack}          	{{\mathsf{Stack}}}
\newcommand{\Ent}          	{{\mathsf{Ent}}}
\newcommand{\Exit}          	{{\mathsf{Exit}}}
\newcommand{\Etal}         		{\mathsf{Etale}}
\newcommand{\Basic}         	{\mathsf{Basic}}
\newcommand{\Acat}          	{{\mathsf{A}}}
\newcommand{\Pcat}          	{{\mathsf{P}}}
\newcommand{\Ucat}          	{{\mathsf{U}}}
\newcommand{\Hgroup}          	{{\mathsf{H}}}
\newcommand{\Sstrat}          	{{\mathcal{S}}}
\newcommand{\Tstrat}          	{{\mathcal{T}}}
\newcommand{\Kstrat}          	{{\mathcal{K}}}
\newcommand{\Lstrat}          	{{\mathcal{L}}}
\newcommand{\Jcontrol}          	{{\mathcal{J}}}
\newcommand\cov[1]          	{{\mathsf{#1}}}
\newcommand{\cosheaf}[1]          	{\underline{\mathsf{{#1}}}}
\newcommand{\sheaf}[1]          		{\overline{\mathsf{{#1}}}}
\newcommand{\bisheaf}[1]          	{\underline{\overline{\mathsf{{#1}}}}}
\newcommand{\Fstack}          	{\ulbar {\mathcal{F}}}
\newcommand{\uFstack}          	{\ubar {\mathcal{F}}}
\newcommand{\lFstack}          	{\lbar {\mathcal{F}}}
\newcommand{\Gstack}          	{\ulbar {\mathcal{G}}}
\newcommand{\uGstack}          	{\ubar {\mathcal{G}}}
\newcommand{\lGstack}          	{\lbar {\mathcal{G}}}
\newcommand{\Hstack}          	{\ulbar {\mathcal{H}}}
\newcommand{\Epi}			{{\mathsf{Epi}}}	
\newcommand{\Mono}		{{\mathsf{Mono}}}
\newcommand{\Iso}         	 	{{\mathsf{Iso}}}
\newcommand{\Ffunc}          	{{\mathsf{F}}}
\newcommand{\Gfunc}          	{{\mathsf{G}}}
\newcommand{\Hfunc}          	{{\mathsf{H}}}
\newcommand{\Ifunc}          	{{\mathsf{I}}}
\newcommand{\Qfunc}          	{{\mathsf{Q}}}
\newcommand\ubar[1]		{{\overline{#1}}}	
\newcommand\lbar[1]		{{\underline{#1}}}	
\newcommand\ulbar[1]		{\overline{{\underline{#1}}}}
\newcommand{\colim}		{\mathsf{colim}\;}
\newcommand{\mylim}		{\mathsf{lim}\;}
\newcommand{\image}		{\mathsf{im}\;}
\newcommand{\coimage}		{\mathsf{coim}\;}
\newcommand{\id}			{\mathrm{id}}
\newcommand{\Dist}			{\mathsf{dist}}
\newcommand\define[1]		{{\bf{#1}}}	
\newcommand{\Haus}		{\mathsf{Haus}}
\def\moverlay{\mathpalette\mov@rlay}
\def\mov@rlay#1#2{\leavevmode\vtop{%
   \baselineskip\z@skip \lineskiplimit-\maxdimen
   \ialign{\hfil$\m@th#1##$\hfil\cr#2\crcr}}}
\newcommand{\charfusion}[3][\mathord]{
    #1{\ifx#1\mathop\vphantom{#2}\fi
        \mathpalette\mov@rlay{#2\cr#3}
      }
    \ifx#1\mathop\expandafter\displaylimits\fi}
\newtheoremstyle{amit}
{7pt}
{7pt}
{}
{7pt}
{\bf}
{:}
{.5em}
{}
\begin{document}

\title{Persistent Local Systems}
\author[1]{Robert MacPherson}
\author[2]{Amit Patel \thanks{This research was partially supported by NSF grant CCF 1717159} }
\affil[1]{School of Mathematics, Institute for Advanced Study}
\affil[2]{Department of Mathematics, Colorado State University}
\date{}
\renewcommand\Authands{ and }

\maketitle

\abstract{
In this paper, we give lower bounds for the homology of the fibers of a map to a manifold. 
Using new sheaf theoretic methods, we show that these lower bounds persist over whole open sets of the manifold
and that they are stable under perturbations of the map.
This generalizes certain ideas of persistent homology to higher dimensions.
}

\theoremstyle{amit}
\newtheorem{defn}{Definition}[section]
\newtheorem{prop}[defn]{Proposition}
\newtheorem{lem}[defn]{Lemma}
\newtheorem{thm}[defn]{Theorem}
\newtheorem{corr}[defn]{Corollary}
\newtheorem{rmk}[defn]{Remark}
\newtheorem{ex}[defn]{Example}

\section{Introduction}
\label{sec:introduction}

In this paper, we consider a continuous mapping $f: X \to M$ of a topological space $X$ to a manifold $M$.  
We think of $f$ as a family of fibers $f^{-1}p \subseteq X$ parameterized by points $p$ in $M$. 
We are interested in topological properties of these fibers that are stable under small perturbations of the map $f$.  
Besides being of mathematical interest in its own right, this stability requirement is important for applications, where $f$ may be subject to perturbations from measurement noise or computational error.   
Stability is particularly appealing for data analysis as data is inherently noisy. 

We study the homology groups of the fibers $\Hfunc_j( f^{-1}p)$ and their dimensions, the Betti numbers 
$\beta_j( f^{-1}p)$.  
We are mainly interested in lower bounds on the Betti numbers that continue to hold for small perturbations of $f$.  
Lower bounds are important because linearly independent elements of $\Hfunc_j( f^{-1}p)$ 
that remain linearly independent under small perturbations are regarded as interesting features of the family.  
The stability requirement is a serious one:  
even when $\beta_j ( f^{-1}p)$ is large, there can exist perturbations $\tilde f$ arbitrarily close to $f$ such that 
$\beta_j ( \tilde f^{-1} p )=0$.

\paragraph{Conventions.}  In this introduction,    we fix a map $f:X\to M$, where $M$ is a manifold, a metric $d$ on $M$, and an orientation of $M$. All homology groups are with field coefficients and of fixed degree $j$ and all open sets are path-connected.

\paragraph{Persistent Dimension.} The simplest statement of the type of result in this paper is the following.
For every open set $U\subseteq M$, we will associate, in a few paragraphs below, 
a nonnegative integer $\mathcal P(U)$ called the {\em persistent dimension} of $U$ which
has the following properties:

\begin{enumerate}
\item{Betti number lower bound}: $\beta_j(f^{-1}p)\geq \mathcal P(U)$ for all $p\in U$.
That is, $\mathcal P(U)$ is a lower bound for the Betti numbers of all the fibers over $U$. 
\label{prop:dimension_one}
\item {Stability}: For all sufficiently small $\epsilon \geq 0$ and for all perturbed $\tilde f$ that is $\epsilon$-close 
to $f$, we have 
$\beta_j (\tilde f^{-1}p) \geq \mathcal P(U)$ for all $p\in U$ that is at least $\epsilon$ away from the boundary of $U$. 
In other words, $\mathcal P(U)$ is still a lower bound for the Betti numbers of the fibers if $U$ is shrunk by~$\epsilon$.
\label{prop:dimension_two}   
\end{enumerate}
The metric in which we ask $\tilde f$ to be $\epsilon$-close to $f$ is $\sup_{x \in X} d(f x,\tilde f x)$. 
By the second property, it follows that 
for all $p\in U$, there is an $\epsilon$ so that for all $\tilde f$ that is $\epsilon$ close to~$f$, 
$\beta_j(\tilde f^{-1}p)\geq\mathcal P(U)$.
In other words, the lower bounds on Betti numbers provided by $\mathcal P(U)$ are meaningful in the presence of small enough error in the determination of~$f$.

We would like to say that the $\mathcal P(U)$-dimension part of $\Hfunc_j(f^{-1}p)$ guaranteed by the first property 
forms a family over $U$. 
To do that, we need to recall the idea of a local system. 

\paragraph{Local Systems.} A {local system} $\mathcal{L}$ over a space $U$, also called a locally constant sheaf over $U$, is  a ``family'' of vector spaces parameterized by points in $U$.  It may be defined as the following data:  
	\begin{enumerate}
	\item a vector space $\mathcal L_p$ for every point $p\in U$ called the stalk of $\mathcal L$ at $p$, and 
	\item an isomorphism $\mathcal L_\gamma: \mathcal L_p\to \mathcal L_q$ for every homotopy class 
	$\gamma$ of paths from $p$ to $q$ called the monodromy along $\gamma$.
	\end{enumerate}
Local systems over $U$ form a category: morphisms $\mathcal L \to \mathcal L'$ are sets of 
linear maps $\mathcal L_p\to \mathcal L'_p$ for each $p\in U$ that commute with the monodromy maps.
The isomorphisms $\mathcal L_\gamma$ are required to be compatible with composition of paths.
In other words, a local system is a functor from the fundamental groupoid of $U$ to the category of vector spaces,
and a morphism of local systems is a natural transformation of functors.
If $U'$ is a subset of $U$, a local system $\mathcal L$ over $U$ restricts to a local system $\mathcal L|_{U'}$ over $U'$ by throwing away all the data that does not lie in $U'$.
If $U$ is path-connected, the vector spaces $\mathcal L_p$ all have the same dimension and if further $U$ 
is simply connected, then they may all be identified with a single vector space $V$ 
so that all the maps $L_\gamma$ are the identity on $V$.

\paragraph{Persistent Local Systems.}  
For every path-connected 
open set $U\subseteq M$, we will construct (see Example \ref{ex:bisheaf} followed by Example
\ref{ex:stack_example}) a local system $\mathcal L(U)$ over $U$ 
called the {\em persistent local system} over $U$ with the following properties:

\begin{enumerate}
\item{Relation to homology of fibers}: For every point $p\in U$, the stalk $\mathcal L(U)_p$ of $\mathcal L(U)$ at $p$ is naturally a subquotient of $\Hfunc_j(f^{-1}p)$, the $j$-th homology of the fiber over~$p$;
see Example \ref{ex:stack_example}.
Recall a subquotient of $\Hfunc_j(f^{-1}p)$ is a quotient $\frac{B}{A}$ where $A \subseteq B \subseteq \Hfunc_j(f^{-1}p)$
are subgroups.

\item{Stability}: For every perturbed $\tilde f$ that is  $\epsilon$ close to $f$,  $\mathcal L(U)|_{U^\epsilon}$ is naturally a subquotient of 
$ \tilde{\mathcal{L}}(U^\epsilon) $ where $U^\epsilon$ is the interior of the subset of $U$ consisting of points that are 
at least a distance $\epsilon$ from the boundary of $U$, $\mathcal L(U)|_{U^\epsilon}$ 
is the restriction of $\mathcal L(U)$ to $U^\epsilon$, and 
$ \tilde{\mathcal{L}}(U^\epsilon)$ is the persistent local system over $U^\epsilon$ constructed from~$\tilde f$;
see the discussion after Corollary \ref{corr:main}.
\end{enumerate}
We define $\mathcal P(U)$, the persistent dimension of $U$, to be the dimension of the stalks of 
$\mathcal L(U)$.  
The two properties of the persistent dimension above follow from the two properties of a persistent local system since the dimension of any vector space $V$ is bounded from below by the dimension any subquotient of $V$. 

\paragraph{Sheaves and Cosheaves.} It is not surprising that sheaf theory is a useful tool to study these questions.  It was introduced by Leray 75 years ago precisely to study the homology of the fibers of a map.  We develop the sheaf theory we need (constructible sheaves and cosheaves) in Sections \ref{sec:sheaves} and \ref{sec:cosheaves}. 
Local systems are equivalent to certain types of sheaves (see Definition \ref{defn:constructible_sheaf}) 
and cosheaves (see Definition \ref{defn:constructible_cosheaf}).

The $j$-th Leray homology cosheaf of a map $f:X\to M$ (see Example \ref{ex:ordinary_cosheaf}) 
is a cosheaf $\cosheaf{F}_j$ under $M$ that contains the information of the $j$-th homology of the 
fibers $\Hfunc_j(f^{-1}p)$, for all points $p\in M$, all woven together into one algebraic object.  
In the cellular setting, it is amenable to computation \cite{shepard}.
The $j$-th Leray relative homology sheaf of~$f$ (see Example \ref{ex:relative_sheaf})
is a similar dual object $\sheaf{F}_j$ over $M$.

\paragraph{The Case $M= \mathbb{R}$ and Persistent Homology.}  If the manifold $M$ is the space of real numbers, then there is a remarkably simple construction of the persistent local systems $\mathcal L(U)$.  Let $\cosheaf{F}$ be the Leray cosheaf of $f$ and $\cosheaf{F}|_U$ be its restriction to $U$.  
Then $\mathcal L(U)$  can be characterized as the largest local system contained in $\cosheaf{F}|_U$ as a direct summand. 
If $\sheaf{F}$ is the Leray sheaf, $\mathcal{L}(U)$ can also be characterized as the largest local system contained in 
$\sheaf{F}|_U$ as a direct summand.

So $\mathcal L(U)$ constructed in this way satisfies the two properties \emph{Relation to homology of the fibers} and \emph{Stability}.  This construction and these properties of it were already known to the 
persistent homology community \cite{levelset_stability, patel}.
Since U is path-connected and simply connected, the stalks of the local system $\mathcal L(U)$ are all identified with a single vector space $V$. 

Most of the persistent homology literature focuses on a special case of our situation.  There is a space $Y$ with a function $h:Y\to \mathbb R$ and we are interested in the homology of the sublevel sets $h^{-1}(-\infty,r]$ as a function of $r$ .  
For every pair $r \leq s$, the image of the homomorphism $\Hfunc_j \big( h^{-1}(-\infty, r] \big)
\to \Hfunc_j \big( h^{-1}(-\infty, s] \big)$ is called the \emph{persistent vector space} associated to the interval $(r,s)$~\cite{Edelsbrunner2002}.
The collection of all dimensions of persistent vector spaces, called the \emph{rank function} of $h$, 
uniquely defines what is called the \emph{barcode} or the \emph{persistence diagram} of $h$ 
\cite{CSEdH, Carlsson2009, Patel2018}.
This special case translates into a case of ours by concocting a function $f:X\to \Rspace$ such that the 
sublevel sets of $h$ are the fibers of $f$.  
Take 
$X= \big\{(y,r)\in Y \times \mathbb R \; \big| \; h(y)\leq r \big\}$ 
and $f(y,r)=r$.   
The persistent vector space of $h$ for an interval $(r,s)$ is the persistent local system
of $f$ over $(r,s)$.
In this way, the persistent local system behaves very much like the well known rank function 
in persistent homology.

There is work on the persistent homology of circle valued functions $f : Y \to \mathbb{S}^1$ \cite{Burghelea2013}.
We believe the persistent local systems of $f$ are closely related to their invariant.

\paragraph{This paper.}This paper was motivated by  our desire to generalize this very beautiful theory of persistent vector spaces to functions with values in any manifold.  
One might ask, why not just do the same thing --  The construction of the persistent local system
$\mathcal L(U)$, for the case $M = \Rspace$ above, 
makes sense for any manifold $M$.  
However, it does not work. 
The result does not satisfy the stability condition.
This is the first indication of many aspects of the problem that are much more complicated for higher dimensional manifolds than for $\mathbb R$. 
In fact, one can show that there can be no construction of persistent local systems $\mathcal L(U)$ 
that depends only on $\sheaf{F}$, gives the ``right'' answer for fibrations, and satisfies stability (see Example \ref{ex:two}); the situation is similar for $\cosheaf{F}$ (see Example \ref{ex:one}). 

Our construction of persistent local systems uses both the cosheaf $\sheaf{F}$ 
and the sheaf $\cosheaf{F}$ plus a map between them
$\Ffunc: \sheaf{F}\to \cosheaf{F}$ (see Example \ref{ex:bisheaf}) constructed from the orientation class of~$M$.  
We call this data a {\em bisheaf}. 
In terms of computability, a bisheaf is not much more complicated than a sheaf or a cosheaf.   
However, since the map $\Ffunc$ mixes objects from different categories, the theory of bisheaves is complicated.  
For example, bisheaves form an interesting category (see Definition \ref{defn:bisheaves}), but unlike sheaves and cosheaves, it is not an abelian category (e.g.\ no zero object).

Given the bisheaf $\Ffunc: \sheaf{F} \to \cosheaf{F}$, the construction of the persistent local system $\mathcal L(U)$ proceeds in four steps.  Here is an outline:
\begin{enumerate}
\item Restrict the bisheaf to $U$, $\Ffunc |_U: \sheaf{F}|_U\to \cosheaf{F}|_U$.
\item Construct a canonical subsheaf $\Epi(\sheaf{F}|_U) \hookrightarrow \cosheaf{F}|_U$;
see Definition \ref{defn:epification}.
\item Construct a canonical quotient cosheaf $\cosheaf{F}|_U \twoheadrightarrow \Mono (\cosheaf{F}|_U)$;
see Definition \ref{defn:monofication}.
\item Then $\mathcal L(U)$ is the image of the composition 
$\Epi(\sheaf{F}|_U) \hookrightarrow \sheaf{F}|_U\to \cosheaf{F}|_U \twoheadrightarrow \Mono (\cosheaf{F}|_U)$;
see Proposition \ref{prop:image_local_system}.
\end{enumerate}

We would have liked the persistent local systems $\mathcal L(U)$ to satisfy a stacky functoriality in $U$
as in \cite{treumann}.  What is true is a rather weaker statement: if $U'$ is a subset of $U$, then  $\mathcal L(U)|_{U'}$ is naturally a subquotient of $\mathcal L(U')$.  The solution we found to this is the {\em isobisheaf stack} (see Definition \ref{defn:isobisheaf_stack}) which has all the functorial properties we need.  We believe that the category of bisheaves, the $\Epi$ and $\Mono$ constructions, and isobisheaf stacks are interesting new tools of sheaf theory.
We hope they will be useful in other contexts.

\paragraph{Acknowledgments.}
The second author thanks Vidit Nanda and Oliver Vipond for carefully reading the first versions of this
paper.
The second author also thanks Justin Curry for helpful comments on Appendices \ref{sec:sheafification}
and \ref{sec:cosheafification}.
Finally, we thank our anonymous reviewers.

\section{Constructible Maps}
\label{sec:maps}
We start by defining the class of spaces and maps we will be working with. The class we consider is chosen to be general enough to include all the maps that generally come up in geometry and applied mathematics, but controlled enough to allow the powerful technology of constructible sheaf theory.  

\begin{defn}{\cite{Mather}}
A \define{Thom-Mather space} is a triple $(X, \Sstrat, \Jcontrol)$ satisfying the 
following nine axioms:
	\begin{enumerate}
	\item $X$ is a Hausdorff, locally compact, and second-countable
	topological space.
	
	\item $\Sstrat$ is a set of path-connected, locally closed subsets of 
	$X$ such that $X$
	is the disjoint union of the elements of $\Sstrat$.
	
	\item[] The elements of $\Sstrat$ are called the \emph{strata} of $X$.
	We call $\Sstrat$ the stratification of the Thom-Mather space.
	
	\item Each stratum of $X$ is a topological manifold (in the induced topology)
	provided with a $C^\infty$ smoothness structure.
	
	\item The set $\Sstrat$ is locally finite. That is, each point $x \in X$ has an open
	neighborhood that intersects finitely many strata.
	
	\item The set $\Sstrat$ satisfies the \emph{condition of the frontier}:
	if $R,S \in \Sstrat$ and $S$ has a non-empty intersection with the
	closure of $R$, then $S$ is a subset of the closure of $R$.
	In this case, we say $S$ is on the \emph{frontier} of $R$.
	
	\item[] The axiom of the frontier makes $\Sstrat$ a poset with $S \leq R$ iff
	$S$ is on the frontier of $R$.
	
	\item $\Jcontrol$ is a triple $\big \{ (T_S), (\pi_S), (\rho_S) \big \}$, where
	for each $S \in \Sstrat$, $T_S$ is an open neighborhood of $S$ in $X$,
	$\pi_S : T_S \to S$ is a continuous retraction onto $S$, and $\rho_S : T_S \to [0,\infty)$
	is a continuous function.
	
	\item[] The open set $T_S \subseteq X$ is called the \emph{tubular neighborhood} 
	of $S$ in $X$,
	$\pi_S$ is called the \emph{local retraction} of $T_S$ onto $S$, and
	$\rho_S$ is called the \emph{tubular function} of $S$.
	We call $\Jcontrol$ the \emph{control data} of the Thom-Mather space.
	
	\item For each stratum $S \in \Sstrat$, $S = \{ x \in T_S \; | \; \rho_S(x) = 0 \}$.
	
	\item[] For two strata $R, S \in \Sstrat$, let $T_{R,S} = T_R \cap S$,
	$\pi_{R,S} = \pi_R | T_{R,S} : T_{R,S} \to R$, and $\rho_{R,S} = \rho_R | T_{R,S}:
	T_{R,S} \to [0,\infty)$.
	It is possible that $T_{R,S}$ is empty, in which case these maps are the
	empty mappings.

	\item For any strata $R,S \in \Sstrat$, the mapping
		$$ (\pi_{R, S} , \rho_{R,S} ) : T_{R, S} \to R \times (0, \infty)$$
		is a smooth submersion.
		
	\item For any strata $Q,R, S \in \Sstrat$, the following diagrams commute:
		\begin{equation*}
		\xymatrix{
		T_{Q,S} \cap T_{R,S} \cap \pi^{-1}_{R,S}(T_{Q,R}) 
		\ar[rd]^>>>>>>>>>{\pi_{Q,S}} \ar[rr]^>>>>>>>>>>>>>>>{\pi_{R,S}} && T_{Q,R} \ar[dl]^{\pi_{Q,R}} &
		T_{R,S} \ar@{^{(}->}[rr]^{\pi_{R,S}} \ar[dr]^{\rho_{Q,S}} && T_R \ar[dl]^{\rho_{Q,R}} \\
		& T_{Q,S} & & & [0, \infty). & 
		}
		\end{equation*}
	
	\end{enumerate}

\end{defn}

Let $(X, \Sstrat, \Jcontrol)$ be a Thom-Mather space.
Choose a stratum $S \in \Sstrat$ and a topological ball $B \subseteq S$
open in $S$.
For a value $r \in (0,\infty)$, let
	$$B_r = \left \{ x \in T_S \; \middle | \; \rho_S(x) < r \text{ and } \pi_S(x) \in B \right\}.$$
We call $B_r$ a \emph{basic open} of $(X, \Sstrat, \Jcontrol)$ \emph{associated}
to the stratum $S$.
Let $\Basic(X, \Sstrat, \Jcontrol)$ be the poset of all basic opens
over all strata $S \in \Sstrat$ and over all $r \in (0, \infty)$ ordered by inclusion.
The union of the open sets in $\Basic(X, \Sstrat, \Jcontrol)$ is $X$.
For any two $U, V \in \Basic(X, \Sstrat, \Jcontrol)$ with $x \in U \cap V$, there is a set
$W \in \Basic(X, \Sstrat, \Jcontrol)$ such that $x \in W$ and $W \subseteq U \cap V$.
This makes $\Basic(X, \Sstrat, \Jcontrol)$ a basis for the topology on $X$.

\begin{defn}
\label{defn:constructible_map}
Let $X$ and $Y$ be Hausdorff, locally compact, and second countable topological spaces.
A continuous map $f : Y \to X$ is \define{$(\Sstrat, \Jcontrol)$-constructible}
if there is a Thom-Mather space $(X, \Sstrat, \Jcontrol)$ such that for every 
pair $V \subseteq U$ in $\Basic(X, \Sstrat, \Jcontrol)$ associated to a common stratum, the inclusions
	\begin{equation*}
	\xymatrix{
	\big( Y, Y - f^{-1}(U) \big) \mono \big( Y, Y - f^{-1}(V) \big)
	&&
	f^{-1}(V) \mono f^{-1}(U)
	}
	\end{equation*}
are homotopy equivalences.
A continuous map $f : Y \to X$ is \define{constructible}
if it is $(\Sstrat, \Jcontrol)$-constructible for some Thom-Mather space
$(X, \Sstrat, \Jcontrol)$.
\end{defn}

\begin{ex}
The following classes of maps are all constructible:
(a) Real algebraic maps, (b) real analytic maps that are ``controlled at infinity,'' 
(c) piecewise linear maps that are ``controlled at infinity,'' 
and (d) an open dense set of  proper smooth maps.

Here ``controlled at infinity'' means that the map $Y\to X$ factorizes in the category of analytic 
(resp.\ PL spaces) as follows: 
$Y\subset Z \to X$ where $Y\subset Z$ is an inclusion of an open set,  $Z-Y$ is analytic (resp.\ 
PL) subspace of $Z$, and  $Z \to X$ is proper. 
Proper maps are automatically controlled at infinity: set $Z=Y$.
Algebraic maps are always similarly controlled at infinity.  

In all four cases, the proof has three steps:  
\begin{enumerate}
\item Construct a Whitney stratified structure on the map $Y \to X$ in which $Y$ is a union of strata, using \cite{Shiota} in cases (a), (b), and (c) and \cite{Gibson} in case (d).
\item Choose the Thom-Mather data on $X$ to be the one obtained from the Whitney stratification of $X$ in \cite{Mather}.
\item Use {\em moving the wall} from \cite[Chapter 4 page 70]{SMT} to show the required homotopy equivalences. 
\end{enumerate}
\end{ex}

\begin{rmk}
We expect almost any map defined by a finite process to be constructible.  Non-constructible examples, like the inclusion of a Cantor set into a manifold, come from infinite or iterative processes.
\end{rmk}

We will not require the smooth structure of a Thom-Mather space until 
Section \ref{sec:dilation}.
For the next few sections, all we require is a topological stratified space.
Recall the open cone $C(X)$ on a topological space $X$ is the quotient space
$\dfrac{X \times [0, \infty)}{X \times \{0\}}$.
Its cone point, denoted $\bullet \in C(X)$, is the point $\dfrac{X \times \{0\}}{X \times \{0\}}$.

\begin{defn}{\cite{GorMac80}}
\label{defn:stratifed_space}
An $n$-\define{dimensional (topological) stratified space} $X$ is an $n$-step filtration
\begin{equation*}
	\emptyset = X_{-1} \subseteq \cdots \subseteq X_n = X
\end{equation*}
of a second countable, locally compact, Hausdorff space 
where for each $d$ and each point $p \in X_d - X_{d-1}$,
there is a compact $(n-d-1)$-dimensional stratified space $L$
and a filtration preserving homeomorphism 
$$h: \Rspace^d \times C(L) \to U$$
such that $U$ is an open neighborhood of $p$ and $h (0, \bullet) = p$.
Here $\Rspace^d$ is interpreted as a filtered space with just one step and~$\bullet$ is the cone point of $C(L)$.
We call $h$ a \emph{local parameterization} of the stratified space.
Each path-connected component of $X_d - X_{d-1}$ is a \define{$d$-stratum}.
It will be convenient to write a stratified space as a tuple 
$(X, \Sstrat)$ where~$\Sstrat$ is its set of strata.
\end{defn}

Let $(X, \Sstrat)$ be an $n$-dimensional stratified space.
The local parameterizations imply that each $d$-stratum is a
topological $d$-manifold and that the condition of the frontier is satisfied.
This makes $\Sstrat$ a poset.
We call an open set $U \subseteq X$ an \emph{$\Sstrat$-basic open} if it is the image
of a local parameterization $h : \Rspace^d \times C(L) \to X$.
An $\Sstrat$-basic open is \emph{associated} to the unique stratum in $\Sstrat$
containing the topological ball $h(\Rspace^d \times \bullet)$.
Let $\Basic(X, \Sstrat) \subseteq \Open(X)$ be the poset of $\Sstrat$-basic opens ordered by inclusion.
The set $\Basic(X, \Sstrat)$ is a basis for the topology on $X$.
Note that every finite dimensional Thom-Mather space $(X, \Sstrat, \Jcontrol)$ is a stratified space $(X, \Sstrat)$
and $\Basic(X, \Sstrat, \Jcontrol) \subseteq \Basic(X, \Sstrat)$.
However, not every open set in $\Basic(X, \Sstrat)$ belongs to $\Basic(X, \Sstrat, \Jcontrol)$.

For the purpose of proving stability (see Theorem \ref{thm:main}), it will be convenient to work with a 
triangulation of a Thom-Mather space as opposed to working directly with a Thom-Mather space.
Recall a pair $(K, K_0)$ of simplicial complexes is a simplicial complex $K$ and a subcomplex $K_0 \subseteq K$.
The geometric realization $|K - K_0|$ of the pair is the geometric realization $|K|$ take-away the subspace
$|K_0| \subseteq |K|$.

\begin{defn}
A stratified space $(X, \Kstrat)$ is a \define{triangulation}
if there is a simplicial pair $(K, K_0)$ and a homeomorphism $\phi : | K - K_0 | \to X$
such that each stratum of $\Kstrat$ is the image of a simplex in $K - K_0$.
A stratified space $(X, \Sstrat)$ is \define{triangulable} if there is a triangulation
$(X, \Kstrat)$ such that for each stratum $\sigma \in \Kstrat$ there is a stratum
$S \in \Sstrat$ where $\sigma \subseteq S$.
\end{defn}

We use $\sigma$ and $\tau$ to denote strata of a triangulation $(X, \Kstrat)$.
The \emph{open star} of a stratum $\sigma \in \Kstrat$ is the subposet
$\st\; \sigma := \big \{  \tau \in \Kstrat \; | \; \sigma \leq \tau \big \} \subseteq \Kstrat.$
Note that every open star $\st\; \sigma$ is a $\Kstrat$-basic open associated
to the stratum $\sigma$.


\begin{prop}[\cite{10.2307/2042563}]
\label{prop:triangulation}
Every Thom-Mather space $(X, \Sstrat, \Jcontrol)$ is triangulable.
\end{prop}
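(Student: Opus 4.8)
The plan is to argue by induction on the \emph{depth} of the stratification, i.e.\ the length of the longest chain in the poset $\Sstrat$, triangulating the strata one at a time from the bottom up and using the control data $\Jcontrol$ to extend each partial triangulation across a tubular neighborhood. The base case is a single stratum, where $X$ is a smooth manifold and triangulability is the classical Cairns--Whitehead theorem. For the inductive step, choose a stratum $S$ that is minimal in $\Sstrat$; by the condition of the frontier it is closed in $X$, and it is a smooth manifold, hence triangulable. One should in fact carry through the induction in a strengthened form: not merely that a triangulation exists, but that it can be chosen subordinate to the stratification and unique up to a controlled PL isotopy, since the uniqueness is what makes the gluing in the inductive step go through.

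The key geometric input is that the control data make a punctured tubular neighborhood of $S$ into a locally trivial fiber bundle. By the Thom--Mather isotopy lemmas, for small $\epsilon$ the map $(\pi_S,\rho_S)\colon \{\,x\in T_S \mid 0<\rho_S(x)<\epsilon\,\}\to S\times(0,\epsilon)$ is a locally trivial fibration whose fiber is the open cone on a compact \emph{link} $L$, and the total space $\{\,0\le \rho_S<\epsilon\,\}$ is the open mapping cylinder of the link bundle $\rho_S^{-1}(\epsilon)\to S$. Each link $L$ carries the structure of a Thom--Mather space of strictly smaller depth (restrict the control data to $\rho_S^{-1}(\epsilon)\cap \pi_S^{-1}(x)$), so the inductive hypothesis triangulates it; the uniqueness clause lets the triangulations of the individual links be assembled, over local trivializations of the bundle, into a single triangulation of $\rho_S^{-1}(\epsilon)$ compatible with a chosen triangulation of $S$. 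Coning this over $S$ through the mapping-cylinder structure yields a triangulation of the closed neighborhood $\{\,0\le\rho_S\le \epsilon/2\,\}$ in which $S$ is a subcomplex.

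It remains to splice this local triangulation to one of the exterior. The complement $X-S$ is again a Thom--Mather space, but of the same depth, so one runs the construction simultaneously over all minimal strata (or, more carefully, processes strata in a fixed order compatible with the poset and works over the complement of those already handled), and then glues along the collar $\rho_S^{-1}[\epsilon/2,\epsilon]$ using a relative triangulation theorem: a PL homeomorphism of two triangulations of a common closed subspace promotes to a triangulation of the union. Local finiteness of $\Sstrat$ and second countability of $X$ guarantee the process is well defined globally and terminates.

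The main obstacle is precisely this compatibility bookkeeping, not any single step. One must ensure that the triangulation produced on a link of $S$ agrees with what $X$ already induces near the higher strata adjacent to $S$, and that the bundle trivializations used to spread out the link triangulations can be chosen coherently over all of $S$ at once --- an obstruction-theoretic/partition-of-unity argument that is tractable only because everything in sight is controlled. Packaging all of this is the content of the cited reference; the cleanest route is to state and prove the strengthened inductive hypothesis (existence plus uniqueness up to controlled PL isotopy) and let the mapping-cylinder and relative-triangulation lemmas do the rest.
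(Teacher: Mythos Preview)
The paper does not prove this proposition at all: it is stated with a citation and no proof environment follows. There is nothing in the paper to compare your argument against. The result is simply imported from the literature as a black box.

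That said, your sketch is a faithful outline of how the cited triangulation theorems (Goresky, Johnson, Verona) actually proceed: induction on depth, triangulate the minimal closed strata, use the control data and Thom's first isotopy lemma to exhibit a tubular neighborhood as a mapping cylinder of a link bundle, apply the inductive hypothesis to the lower-depth link, and glue along a collar. You are also right that the real work is the compatibility bookkeeping and the need for a strengthened inductive statement including uniqueness up to stratified isotopy; without that, the gluing across overlapping tubes of different minimal strata does not obviously cohere. One technical point you glide over: the links need not be connected or even nonempty, and the ``bundle'' over $S$ is only a stratified fiber bundle, so the assembly of link triangulations over local trivializations requires the stratified version of the isotopy lemma, not just smooth bundle theory. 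This is handled in the references but is where a casual write-up would break. If you intend to actually supply a proof rather than cite one, you would need to state precisely the relative/uniqueness form of the inductive hypothesis and invoke the stratified isotopy lemma explicitly; as written, your proposal is a correct roadmap but not a proof.
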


Throughout this paper, $M$ will denote a topological $m$-manifold without boundary.
A topological manifold is a locally Euclidean, second-countable, and Hausdorff space.

\section{Sheaves}
\label{sec:sheaves}
In this section, we develop the theory of constructible sheaves.  We introduce the notions of an episheaf and epification which we will use to study the fibers of a 
constructible~map.  On a technical level, the main new device is the use of basic open sets. 

For a topological space $X$, 
let $\Open(X)$ be its poset of open sets ordered
by inclusion~$V \subseteq U$.
An \emph{open cover} of an open set $U \subseteq X$ is 
a subposet $\Ucat \subseteq \Open(X)$ of open sets whose union is $U$ and 
for every $U_i, U_j \in \Ucat$, $U_i \cap U_j$ is a union of elements in $\Ucat$.
Let $\Ab$ be the category of abelian groups.

\begin{defn}
\label{defn:sheaf}
A \define{sheaf} (of abelian groups) \emph{over} $X$ is a 
contravariant functor
$$\sheaf{F} : \Open(X) \to \Ab$$
satisfying the following property.
For every open set $U \subseteq X$ and for every open cover $\cov{U}$ of $U$, 
the universal map $\sheaf{F}(U) \to \mylim \sheaf{F} |_{\cov{U}}$
is an isomorphism.
A \define{sheaf map} is a natural transformation of functors 
$\ubar{\alpha} : \sheaf{F} \to \sheaf{G}$.
\end{defn}


\begin{defn}
\label{defn:constructible_sheaf}
Let $(X, \Sstrat)$ be a stratified space.
A sheaf $\sheaf{F}$ over $X$ is \define{$\Sstrat$-constructible} 
if for every pair of $\Sstrat$-basic opens $V \subseteq U$
associated to a common stratum,
the map
	$$\sheaf{F}(V \subseteq U) : \sheaf{F}(U) \to \sheaf{F}(V)$$
is an isomorphism.
If $\sheaf{F}(V \subseteq U)$ is an isomorphism for \emph{every} pair of $\Sstrat$-basic opens 
$V \subseteq U$, then $\sheaf{F}$ is a \define{local system}.
A sheaf $\sheaf{F}$ over $X$ is \define{constructible} if there is a stratified space
$(X, \Sstrat)$ for which $\sheaf{F}$ is $\Sstrat$-constructible.
Let $\Sheaf(X, \Sstrat)$ be the category of $\Sstrat$-constructible 
sheaves over $X$ and sheaf maps.
Let $\Sheaf(X)$ be the category of constructible sheaves
over $X$ and sheaf maps.
\end{defn}

When defining an $\Sstrat$-constructible sheaf over $X$, it is enough to specify
a well behaved contravariant functor on a subposet of $\Open(X)$.
Let $\Acat \subseteq \Basic(X, \Sstrat)$ be any subposet that is a basis for the topology on $X$.
For example, if $(X, \Sstrat, \Jcontrol)$ is a Thom-Mather space, then we may let
$\Acat$ be $\Basic(X, \Sstrat, \Jcontrol)$.
Let $\Ffunc : \Acat \to \Ab$ be a contravariant functor
such that for every pair $J \subseteq I$ associated to a common stratum, the map
$\Ffunc(J \subseteq I)$ is an isomorphism.
Then $\Ffunc$ uniquely generates (up to an isomorphism) 
an $\Sstrat$-constructible sheaf $\sheaf{F}$ as follows.
For an arbitrary open set $U \subseteq X$, let $\Acat(U) \subseteq \Acat$ 
be the subposet consisting of all open sets contained in $U$.
Let $\sheaf{F}(U) := \lim \Ffunc |_{\Acat(U)}$.
Note that if $U \in \Acat$, then $\sheaf{F}(U)$ is canonically isomorphic to $\Ffunc(U)$.
For an arbitrary pair of open sets $V \subseteq U \subseteq X$, let $\sheaf{F}(V \subseteq U)$ be the
universal morphism between the two limits.
See Appendix \ref{sec:sheafification} for a check that $\sheaf{F}$ is indeed an $\Sstrat$-constructible sheaf.

Given a sheaf map $\ubar \alpha : \sheaf{F} \to \sheaf{G}$ between two $\Sstrat$-constructible
sheaves, its image $\image \ubar \alpha$ is an $\Sstrat$-constructible sheaf equipped with a canonical
inclusion $\image \ubar \alpha \mono \sheaf{G}$ as follows.
Let $\Hfunc : \Basic(X, \Sstrat) \to \Ab$ be the contravariant functor that assigns to 
each $\Sstrat$-basic open $U$ the group $\Hfunc(U) := \image \ubar \alpha(U)$.
For $V \subseteq U$, the map $\Hfunc(V \subseteq U)$ is the map $\sheaf{G}(V \subseteq U)$
restricted to $\image \ubar \alpha(U)$.
If both $V \subseteq U$ are associated to a common stratum, then $\Hfunc(V \subseteq U)$
is an isomorphism. 
Extend $\Hfunc$ to a sheaf $\sheaf{H}$ over $X$ using the procedure in the previous paragraph.
For any open set $U \subseteq X$, the universal morphism $\sheaf{H}(U) \to \sheaf{G}(U)$
is injective.
The coimage, kernel, and cokernel of $\ubar \alpha$ are defined similarly.



\begin{ex}
\label{ex:relative_sheaf}
Let $f : Y \to X$ be an $(\Sstrat, \Jcontrol)$-constructible map.
Define $\sheaf{F}_\ast$ as the $\Sstrat$-constructible sheaf generated
by assigning to each  $U \in \Basic(X, \Sstrat, \Jcontrol)$
the relative singular homology group
$$\sheaf{F}_\ast ( U ) := \Hgroup_{\ast} \big( Y, Y - {f}^{-1} ( U ); \Zspace \big).$$
For two $(\Sstrat, \Jcontrol)$-basic opens $V \subseteq U$ associated to a common
stratum, the map
	$$\sheaf{F}_\ast(V \subseteq U) : \sheaf{F}_\ast (U) \to \sheaf{F}_\ast(V)$$
is, by definition of an $(\Sstrat, \Jcontrol)$-constructible map, an isomorphism.
Thus $\sheaf{F}_\ast$ is an $\Sstrat$-constructible sheaf.
\end{ex}


\begin{defn}
An $\Sstrat$-constructible sheaf $\sheaf{F}$ over $X$ is an
\define{episheaf} if for every pair of $\Sstrat$-basic opens 
$V \subseteq U$,
the map $\sheaf{F}(V \subseteq U) : \sheaf{F}(U) \to \sheaf{F}(V)$
is surjective.
\end{defn}

\begin{prop}
\label{prop:epi_one}
Consider a sheaf map $\ubar \alpha : \sheaf{E} \to \sheaf{F}$ in $\Sheaf(X, \Sstrat)$.
If $\sheaf{E}$ is an episheaf, then $\image \ubar \alpha$ is an $\Sstrat$-constructible episheaf.
\end{prop}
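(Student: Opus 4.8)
The plan is to show that the image presheaf $U \mapsto \image\big(\ubar\alpha(U)\big) \subseteq \sheaf{F}(U)$, suitably sheafified, is already an $\Sstrat$-constructible episheaf. Since we are working with $\Sstrat$-constructible sheaves, everything is determined by values on $\Basic(X,\Sstrat)$, so I would first reduce the whole problem to basic opens. Concretely, define $\sheaf{G}$ to be the $\Sstrat$-constructible sheaf generated (in the sense explained after the definition of $\Sheaf(X,\Sstrat)$) by the functor on $\Basic(X,\Sstrat)$ sending $U$ to $\image\big(\ubar\alpha(U): \sheaf{E}(U) \to \sheaf{F}(U)\big)$, with the restriction maps induced by those of $\sheaf{F}$ (equivalently of $\sheaf{E}$). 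The first thing to check is that this really is a functor on $\Basic(X,\Sstrat)$ that sends pairs associated to a common stratum to isomorphisms: for such a pair $V \subseteq U$, both $\sheaf{E}(V\subseteq U)$ and $\sheaf{F}(V\subseteq U)$ are isomorphisms, and $\ubar\alpha$ is natural, so the induced map on images is an isomorphism too. Hence $\sheaf{G}$ is a well-defined $\Sstrat$-constructible sheaf, and the inclusions $\sheaf{G}(U) \hookrightarrow \sheaf{F}(U)$ on basic opens assemble (by the universal property of limits used to generate $\sheaf{F}$) into a monomorphism of sheaves $\sheaf{G} \hookrightarrow \sheaf{F}$ whose image is exactly the image of $\ubar\alpha$.

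**The key point** is the episheaf property: for \emph{every} pair of $\Sstrat$-basic opens $V \subseteq U$ (not just those associated to a common stratum), the map $\sheaf{G}(V \subseteq U) : \sheaf{G}(U) \to \sheaf{G}(V)$ is surjective. Here is where the hypothesis that $\sheaf{E}$ is an episheaf enters. Consider the commuting square with $\sheaf{E}(V\subseteq U) : \sheaf{E}(U) \to \sheaf{E}(V)$ on the left, $\sheaf{F}(V\subseteq U)$ on the right, and the two instances of $\ubar\alpha$ horizontally. Take any element $y \in \sheaf{G}(V) = \image\big(\ubar\alpha(V)\big)$; write $y = \ubar\alpha(V)(e')$ for some $e' \in \sheaf{E}(V)$. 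Because $\sheaf{E}$ is an episheaf, $\sheaf{E}(V\subseteq U)$ is surjective, so there is $e \in \sheaf{E}(U)$ with $\sheaf{E}(V\subseteq U)(e) = e'$. Then $\ubar\alpha(U)(e) \in \sheaf{G}(U)$ and, by commutativity of the square, $\sheaf{F}(V\subseteq U)\big(\ubar\alpha(U)(e)\big) = \ubar\alpha(V)(e') = y$. Since the restriction map of $\sheaf{G}$ is just the restriction of that of $\sheaf{F}$, this exhibits $y$ as the restriction of $\ubar\alpha(U)(e) \in \sheaf{G}(U)$. Hence $\sheaf{G}(V\subseteq U)$ is surjective, so $\sheaf{G}$ is an episheaf.

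**The one technical subtlety** — and the step I'd expect to be the main obstacle to write carefully — is checking that the "image on basic opens" functor generates a sheaf whose value on an arbitrary basic open $U$ is literally $\image\big(\ubar\alpha(U)\big)$, rather than something only isomorphic to it or requiring a sheafification step. The generation procedure defines $\sheaf{G}(U) := \mylim \Ffunc|_{\Acat(U)}$ over basic opens contained in $U$; one must verify that this limit computes the image of $\ubar\alpha(U)$, using that $\sheaf{E}$, $\sheaf{F}$ are $\Sstrat$-constructible (so both are determined by the same limits) and that taking images commutes with the relevant limits here because $\Acat(U)$ is cofiltered on strata and the transition maps on images between opens in a common stratum are isomorphisms. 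Once that identification is in hand, the episheaf argument above applies verbatim on all basic opens, and $\Sstrat$-constructibility of $\sheaf{G}$ is immediate from its construction; this completes the proof.
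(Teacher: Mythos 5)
Your argument is correct and is essentially the paper's own proof: the paper likewise establishes the episheaf property by chasing the commutative square for an arbitrary pair of $\Sstrat$-basic opens $V \subseteq U$, using surjectivity of $\sheaf{E}(V \subseteq U)$ to lift a preimage and push it through $\ubar\alpha$. The extra scaffolding you supply (constructibility of the image on common-stratum pairs, and the identification of the generated sheaf's value on a basic open with the literal image) is left implicit in the paper, and the subtlety you flag resolves immediately because a basic open $U$ is the maximal element of $\Acat(U)$, so the limit defining $\sheaf{G}(U)$ is just $\image \ubar\alpha(U)$.
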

\begin{proof}
By construction of $\image \ubar \alpha$, we need only look at the following
commutative diagram for any pair of $\Sstrat$-basic opens $V \subseteq U$:
	\begin{equation*}
	\xymatrix{
	\sheaf{E}(U) \ar[d]_{\ubar \alpha(U)} \ar@{->>}[rr]^{\sheaf{E}(V \subseteq U)} && 
	\sheaf{E}(V) \ar[d]^{\ubar \alpha(V)} \\
	\sheaf{F}(U) \ar[rr]^{\sheaf{F}(V \subseteq U)} && \sheaf{F}(V).
	}
	\end{equation*}
The restriction of $\sheaf{F}(V \subseteq U)$ to the image of $\ubar \alpha(U)$
is a surjection onto the image of $\ubar \alpha (V)$.
Thus $\image \ubar \alpha$ is an episheaf.
\end{proof}

Let $\sheaf{F}$ be an $\Sstrat$-constructible sheaf over $X$.
A \emph{sub-episheaf} of $\sheaf{F}$ is an inclusion $\sheaf{E} \mono \sheaf{F}$
of an $\Sstrat$-constructible episheaf $\sheaf{E}$.
The zero sheaf $\sheaf{0} \mono \sheaf{F}$ is the smallest sub-episheaf of $\sheaf{F}$.
For any two sub-episheaves 
$\sheaf{E}_1, \sheaf{E}_2 \mono \sheaf{F}$, their internal sum 
$\sheaf{E}_1 \uplus \sheaf{E}_2$, which assigns to each open set $U$ the smallest subgroup of $\sheaf{F}(U)$
containing both $\sheaf{E}_1(U)$ and $\sheaf{E}_2(U)$, is also a sub-episheaf.
Let $\Pcat$ be the poset of sub-episheaves of $\sheaf{F}$ ordered by inclusion.
For any chain
	\begin{equation*}
	\xymatrix{
	\sheaf{E}_1 \ar@{^{(}->}[r] \ar@{^{(}->}[rd] & \sheaf{E}_2 \ar@{^{(}->}[r] \ar@{^{(}->}[d] & 
	\sheaf{E}_3 \ar@{^{(}->}[r] \ar@{^{(}->}[ld] & \cdots \\
	& \sheaf{F} && 
	}
	\end{equation*}
in $\Pcat$, the sub-episheaf $\biguplus \sheaf{E}_i$ contains them all.
By Zorn's Lemma, $\Pcat$ has a maximal element and therefore 
$\sheaf{F}$ has a maximal sub-episheaf.
Consider a sheaf map $\ubar{\alpha} : \sheaf{F} \to \sheaf{G}$ in~$\Sheaf(X, \Sstrat)$.
Suppose $\sheaf{D} \mono \sheaf{F}$ and $\sheaf{E} \mono \sheaf{G}$
are maximal sub-episheaves.
By Proposition~\ref{prop:epi_one}, 
the image of the composition
	\begin{equation*}
	\xymatrix{
	\sheaf{D} \ar@{^{(}->}[r] & \sheaf{F} \ar[r]^{\ubar{\alpha}} & \sheaf{G}
	}
	\end{equation*}
is a sub-episheaf of $\sheaf{G}$.
My maximality of $\sheaf{E}$, this image is contained in $\sheaf{E}$ thus inducing
a map $\sheaf{D} \to \sheaf{E}$ that makes the following diagram commute:
	\begin{equation*}
	\xymatrix{
	\sheaf{D} \ar@{^{(}->}[d] \ar@{-->}[rr] && \sheaf{E} \ar@{^{(}->}[d] \\
	\sheaf{F} \ar[rr]^{\ubar{\alpha}} && \sheaf{G}.
	}
	\end{equation*}
Thus the assignment to each $\Sstrat$-constructible sheaf 
its maximal sub-episheaf is functorial.

\begin{defn}
\label{defn:epification}
The \define{epification} of $\Sstrat$-constructible sheaves over $X$ is the functor
	$$\Epi : \Sheaf(X, \Sstrat) \to \Sheaf( X, \Sstrat)$$
that sends each sheaf to its maximal sub-episheaf. 
Let $\ubar{\eta} : \Epi \Rightarrow \id_{\Sheaf(X, \Sstrat)}$ be the inclusion
natural transformation.
\end{defn}

\section{Cosheaves}
\label{sec:cosheaves}
Cosheaves are ``dual'' to their better known cousins, sheaves.  In this section, whose parallel structure to the last one reflects that ``duality," we develop the theory of constructible cosheaves.  We introduce the notions of a monocosheaf and monofication.

\begin{defn}
\label{defn:cosheaf}
A \define{cosheaf} (of abelian groups) \emph{under} $X$ is a covariant functor
$$\cosheaf{F} : \Open(X) \to \Ab$$
satisfying the following property.
For every open set $U \subseteq X$ and for every open cover $\cov{U}$ of $U$,
the universal map 
$\colim\; \cosheaf{F} |_{\cov{U}} \to \cosheaf{F}(U)$
is an isomorphism.
A \define{cosheaf map} is a natural transformation of functors $\lbar{\alpha} : \cosheaf{F} \to \cosheaf{G}$.
\end{defn}

\begin{defn}
\label{defn:constructible_cosheaf}
Let $(X, \Sstrat)$ be a stratified space.
A cosheaf $\cosheaf{F}$ under $X$ is \define{$\Sstrat$-constructible} 
if for every pair of open sets $V \subseteq U$ in $\Basic(X, \Sstrat)$ associated
to a common stratum, the map 
$$\cosheaf{F}(V \subseteq U) : \cosheaf{F}(V) \to \cosheaf{F}(U)$$
is an isomorphism.
A cosheaf $\cosheaf{F}$ under $X$ is \define{constructible} if it is
$\Sstrat$-constructible for some stratified space $(X, \Sstrat)$.
If $\cosheaf{F}(V \subseteq U)$ is an isomorphism for \emph{every} pair of $\Sstrat$-basic opens 
$V \subseteq U$, then $\cosheaf{F}$ is a \define{colocal system}.
Let $\Cosheaf(X, \Sstrat)$ be the category of $\Sstrat$-constructible
cosheaves under $X$ and cosheaf maps.
Let $\Cosheaf(X)$ be the category of constructible cosheaves under $X$ and cosheaf maps.
\end{defn}

When defining an $\Sstrat$-constructible cosheaf under $X$, it is enough to specify
a well behaved covariant functor on a subposet of $\Open(X)$.
Let $\Acat \subseteq \Basic(X, \Sstrat)$ be any subposet that is a basis for the
topology on $X$.
For example, if $(X, \Sstrat, \Jcontrol)$ is a Thom-Mather space, then we may let
$\Acat$ be $\Basic(X, \Sstrat, \Jcontrol)$.
Let $\Ffunc : \Acat \to \Ab$ be a covariant functor 
such that for every pair $J \subseteq I$ associated to a common stratum, the map
$\Ffunc(J \subseteq I)$ is an isomorphism.
Then $\Ffunc$ uniquely generates (up to an isomorphism) an $\Sstrat$-constructible cosheaf 
$\cosheaf{F}$ as follows.
For an arbitrary open set $U \subseteq X$, let $\Acat(U) \subseteq \Acat$ 
be the subposet consisting of all open sets in $U$.
Let $\cosheaf{F}(U) := \colim \Ffunc |_{\Acat(U)}$.
Note that is $U \in \Acat$, then $\cosheaf{F}(U)$ is canonically isomorphic to $\Ffunc(U)$.
For an arbitrary pair of open sets $V \subseteq U \subseteq X$, 
let $\cosheaf{F}(V \subseteq U)$ be the
universal morphism between the two colimits.
See Appendix \ref{sec:cosheafification} for a check that $\cosheaf{F}$ is indeed an $\Sstrat$-constructible cosheaf.

Given a cosheaf map $\ubar \alpha : \cosheaf{F} \to \cosheaf{G}$ between two $\Sstrat$-constructible
cosheaves, its image $\image \lbar \alpha$ is an $\Sstrat$-constructible cosheaf equipped with a canonical
inclusion $\image \lbar \alpha \mono \sheaf{G}$ as follows.
Let $\Hfunc : \Basic(X, \Sstrat) \to \Ab$ be the covariant functor that assigns to 
each $\Sstrat$-basic open $U$ the group $\Hfunc(U) := \image \lbar \alpha(U)$.
For $V \subseteq U$, the map $\Hfunc(V \subseteq U)$ is the map $\cosheaf{G}(V \subseteq U)$
restricted to $\image \lbar \alpha(V)$.
If both $V \subseteq U$ are associated to a common stratum, then $\Hfunc(V \subseteq U)$
is an isomorphism. 
Extend $\Hfunc$ to a cosheaf $\cosheaf{H}$ under $X$ using the procedure in the previous paragraph.
For any open set $U \subseteq X$, the universal morphism $\cosheaf{H}(U) \to \cosheaf{G}(U)$
is injective.
The coimage, kernel, and cokernel of $\lbar \alpha$ are defined similarly.



\begin{ex}
\label{ex:relative_cosheaf}
Let $f : Y \to X$ be a $(\Sstrat, \Jcontrol)$-constructible map.
Define $\cosheaf{F}^\ast$ as the $\Sstrat$-constructible cosheaf generated by assigning 
to each $U \in \Basic(X, \Sstrat, \Jcontrol)$ the 
singular relative cohomology group
$$\cosheaf{F}^\ast ( U ) := \Hgroup^\ast \big( Y, Y - f^{-1} (U) ; \Zspace \big).$$
For two $(\Sstrat, \Jcontrol)$-basic opens $V \subseteq U$ associated to a common stratum, the map
	$$\cosheaf{F}^\ast(V \subseteq U) : \cosheaf{F}^\ast (V) \to \cosheaf{F}^\ast(U)$$
is, by definition of an $(\Sstrat, \Jcontrol)$-constructible map, an isomorphism.
Thus $\cosheaf{F}^\ast$ is an $\Sstrat$-constructible cosheaf.
\end{ex}

\begin{ex}
\label{ex:ordinary_cosheaf}
Let $f : Y \to X$ be a $(\Sstrat, \Jcontrol)$-constructible map.
Define $\cosheaf{F}_\ast$ as the $\Sstrat$-constructible cosheaf generated by assigning 
to each $U \in \Basic(X, \Sstrat, \Jcontrol)$ the 
singular homology group
$$\cosheaf{F}_\ast ( U ) := \Hgroup_\ast \big( f^{-1}(U) ; \Zspace \big).$$
For two $(\Sstrat, \Jcontrol)$-basic opens $V \subseteq U$ associated to a common stratum,
the map
$$\cosheaf{F}_\ast(V \subseteq U) : \cosheaf{F}_\ast (V ) \to \cosheaf{F}_\ast( U )$$
is, by definition of an $(\Sstrat, \Jcontrol)$-constructible map,
an isomorphism.
Thus $\cosheaf{F}_\ast$ is a $\Sstrat$-constructible cosheaf.
\end{ex}

\begin{ex}
\label{ex:orientation_cosheaf}
Let $(M, \Sstrat)$ be a stratified space where $M$ is an $m$-manifold 
without boundary and $\Sstrat$ consists of a single stratum namely $M$.
Note that an open set is an $\Sstrat$-basic open iff it is an open topological $m$-ball.
The \emph{local orientation cosheaf} under $M$ is the $\Sstrat$-constructible
cosheaf $\cosheaf{O}$ 
generated by assigning to each open topological $m$-ball $U \subseteq M$
the top dimensional singular relative cohomology group 
$$\cosheaf{O}( U ) := \Hfunc^m \big(M, M - U ; \Zspace \big) \cong \Zspace.$$
For two $m$-balls $V \subseteq U$,
the map
$$\cosheaf{O}(V \subseteq U) : \cosheaf{O} (V) \to \cosheaf{O} (U)$$
is an isomorphism.
Thus $\cosheaf{O}$ is an $\Sstrat$-constructible cosheaf.
Moreover, $\cosheaf{O}$ is a colocal system.
The manifold $M$ is \emph{orientable} if $\cosheaf{O}(M) \cong \Zspace$.
If $M$ is orientable, then an \emph{orientation} of $M$ is the choice of a 
generator of $\cosheaf{O}(M)$.
The poset of all $m$-balls $\Basic(M, \Sstrat)$ is a covering of $M$.
By the cosheaf axiom, the universal map $\colim \cosheaf{O} |_{\Basic(M, \Sstrat)} \to \cosheaf{O}(M)$
is an isomorphism.
If $M$ is orientable, then the map $\cosheaf{O}(U \subseteq M)$ is an isomorphism
for all $m$-balls $U$.
\end{ex}

\begin{defn}
An $\Sstrat$-constructible cosheaf $\cosheaf{M}$ under $X$ is a \define{monocosheaf} if for every pair of 
$\Sstrat$-basic opens $V \subseteq U$, 
the map $\cosheaf{M}(V \subseteq U): \cosheaf{M}(V) \to \cosheaf{M}(U)$
is injective.
\end{defn}

\begin{prop}
\label{prop:mono_one}
Consider a cosheaf map $\lbar \alpha : \cosheaf{F} \to \cosheaf{M}$ in $\Cosheaf(X, \Sstrat)$.
If $\cosheaf{M}$ is a monocosheaf, then the image of $\lbar \alpha$ is an $\Sstrat$-constructible
monocosheaf.
\end{prop}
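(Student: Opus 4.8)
The plan is to dualize the proof of Proposition \ref{prop:epi_one} essentially line by line, swapping surjections for injections and reversing the direction of all the structure maps. First I would fix an arbitrary pair of $\Sstrat$-basic opens $V \subseteq U$ and write down the commutative square coming from naturality of $\lbar\alpha$, namely
	\begin{equation*}
	\xymatrix{
	\cosheaf{F}(V) \ar[d]_{\lbar\alpha(V)} \ar[rr]^{\cosheaf{F}(V \subseteq U)} && \cosheaf{F}(U) \ar[d]^{\lbar\alpha(U)} \\
	\cosheaf{M}(V) \ar@{^{(}->}[rr]^{\cosheaf{M}(V \subseteq U)} && \cosheaf{M}(U),
	}
	\end{equation*}
where the bottom map is injective because $\cosheaf{M}$ is a monocosheaf. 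Write $\image\, \lbar\alpha$ for the subfunctor of $\cosheaf{M}$ with $(\image\,\lbar\alpha)(U) = \image\big(\lbar\alpha(U)\big)$; I should first note this is an $\Sstrat$-constructible cosheaf — constructibility is immediate since on basic opens associated to a common stratum both $\cosheaf{F}(V\subseteq U)$ and $\cosheaf{M}(V\subseteq U)$ are isomorphisms, forcing $(\image\,\lbar\alpha)(V \subseteq U)$ to be an isomorphism too, and the cosheaf-gluing axiom for images follows because colimits are right exact (images are preserved by $\colim$).

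The heart of the argument is then the monocosheaf property for $\image\,\lbar\alpha$. Given $V \subseteq U$ basic, I must show $(\image\,\lbar\alpha)(V \subseteq U) : \image\big(\lbar\alpha(V)\big) \to \image\big(\lbar\alpha(U)\big)$ is injective. But this map is simply the restriction of $\cosheaf{M}(V \subseteq U)$ to the subgroup $\image\big(\lbar\alpha(V)\big) \subseteq \cosheaf{M}(V)$, and a restriction of an injective map to a subgroup is injective. That is the whole point — no diagram chase beyond observing that the image lands inside $\cosheaf{M}(V)$ and the ambient map is mono. (One should also check the image really maps into $\image\big(\lbar\alpha(U)\big)$, which is exactly commutativity of the square above.)

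I do not expect any real obstacle here; the statement is a formal dual of Proposition \ref{prop:epi_one} and the only thing that needs a word of care is the assertion that $\image\,\lbar\alpha$ is genuinely a constructible cosheaf (as opposed to merely a subfunctor), since "image" for cosheaves must be taken in the cosheaf category. If one prefers, one can sidestep this by defining $\image\,\lbar\alpha$ via the epi-mono factorization $\cosheaf{F} \twoheadrightarrow \image\,\lbar\alpha \hookrightarrow \cosheaf{M}$ in $\Cosheaf(X,\Sstrat)$, which exists because $\Cosheaf(X,\Sstrat)$ is abelian; then constructibility of $\image\,\lbar\alpha$ is automatic and only the monocosheaf condition remains, handled exactly as above.
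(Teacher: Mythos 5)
Your proposal is correct and follows essentially the same route as the paper: the same naturality square for $\lbar\alpha$ over a pair of basic opens $V \subseteq U$, followed by the observation that restricting the injective map $\cosheaf{M}(V \subseteq U)$ to $\image\big(\lbar\alpha(V)\big)$ gives an injection into $\image\big(\lbar\alpha(U)\big)$. The extra remarks on why $\image\,\lbar\alpha$ is itself an $\Sstrat$-constructible cosheaf are details the paper leaves implicit, but they do not change the argument.
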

\begin{proof}
By construction of $\image \ubar \alpha$, we need only look at the following
commutative diagram for any pair of $\Sstrat$-basic opens $V \subseteq U$:
	\begin{equation*}
	\xymatrix{
	\cosheaf{F}(U) \ar[d]_{\lbar \alpha (U)}
	&& \cosheaf{F}(V) \ar[d]^{\lbar \alpha(V)} \ar[ll]_{\cosheaf{F}(V \subseteq U)} \\
	\cosheaf{M}(U) && \cosheaf{M}(V) \ar@{^{(}->}[ll]_{\cosheaf{M}(V \subseteq U)}
	}
	\end{equation*}
The restriction of $\cosheaf{M}(V \subseteq U)$ to the image of $\lbar \alpha(V)$
is an injection into the image of $\lbar \alpha(U)$.
Thus $\image \lbar \alpha$ is a monocosheaf.
\end{proof}

Let $\cosheaf{F}$ be an $\Sstrat$-constructible cosheaf under $X$.
A \emph{quotient-monocosheaf} of $\cosheaf{F}$ is a 
surjection $\cosheaf{F} \epi \cosheaf{M}$ to an $\Sstrat$-constructible monocosheaf $\cosheaf{M}$.
The zero cosheaf $\cosheaf{F} \to \cosheaf{0}$ is the largest quotient-monocosheaf of $\cosheaf{F}$
because its kernel is all of $\cosheaf{F}$.
For any two quotient-monocosheaves $\cosheaf{F} \epi \cosheaf{M}_1$ and 
$\cosheaf{F} \epi \cosheaf{M}_2$, let 
$\cosheaf{K}_1, \cosheaf{K}_2 \subseteq \cosheaf{F}$ be their kernels.
Then 
$\cosheaf{F} \epi \sfrac{\cosheaf{F}}{\cosheaf{K}_1 \cap \cosheaf{K}_2}$,
which assigns to each open set $U$ the quotient $\sfrac{\cosheaf{F}(U)}{\cosheaf{K}_1(U) \cap \cosheaf{K}_2(U)}$,
is a quotient-monocosheaf of $\cosheaf{F}$.
Let $P$ be the poset of kernels of quotient-monocosheaves of $\cosheaf{F}$
ordered by containment.
For any chain of quotient-monocosheaves
	\begin{equation*}
	\xymatrix{
	&& \cosheaf{F} \ar@{->>}[ld] \ar@{->>}[d] \ar@{->>}[rd] && \\
	\cdots \ar@{->>}[r] & \cosheaf{M}_3 \ar@{->>}[r] & \cosheaf{M}_2 \ar@{->>}[r] & \cosheaf{M}_1,
	}
	\end{equation*}
the corresponding chain of kernels in $P$ has, by taking intersections, a minimal element in~$P$.
By Zorn's Lemma, $P$ has a minimal element and therefore $\cosheaf{F}$ has a minimal quotient-monocosheaf.
Consider a cosheaf map $\lbar{\alpha} : \cosheaf{F} \to \cosheaf{G}$ in $\Cosheaf(X, \Sstrat)$ and suppose
$\cosheaf{F} \epi \cosheaf{M}$ and $\cosheaf{G} \epi \cosheaf{N}$
are minimal quotient-monocosheaves.
By Proposition \ref{prop:mono_one}, 
the image of the composition
	\begin{equation*}
	\xymatrix{
	\cosheaf{F} \ar[r]^{\lbar \alpha} & \cosheaf{G} \ar@{->>}[r] & \cosheaf{N}
	}
	\end{equation*}
is a quotient-monocosheaf of $\cosheaf{F}$.
By minimality of $\cosheaf{M}$, the kernel of $\cosheaf{F} \epi \cosheaf{M}$
is contained in the kernel of the above composition
inducing a map
$\cosheaf{M} \to \cosheaf{N}$ that makes the following diagram commute:
	\begin{equation*}
	\xymatrix{
	\cosheaf{F} \ar@{->>}[d] \ar[rr]^{\lbar{\alpha}} && \cosheaf{G} \ar@{->>}[d] \\
	\cosheaf{M} \ar@{-->}[rr] && \cosheaf{N}.
	}
	\end{equation*}
Thus the assignment to each $\Sstrat$-constructible cosheaf 
its minimal quotient-monocosheaf is functorial.

\begin{defn}
\label{defn:monofication}
The \define{monofication} of $\Sstrat$-constructible cosheaves under $X$ is the functor
	$$\Mono : \Cosheaf(X, \Sstrat) \to \Cosheaf(X, \Sstrat)$$
that sends each cosheaf to its minimal quotient-monocosheaf.
Let $\lbar{\eta} : \id_{\Cosheaf(X)} \Rightarrow \Mono$ be the quotient
natural transformation.
\end{defn}

\section{Bisheaves}
\label{sec:bisheaves}
We now have both a sheaf theoretic and a cosheaf theoretic approach to studying the fibers 
of a constructible map.  As mentioned in Section \ref{sec:introduction}, neither of these alone is enough to produce the stability results we want.  
We now combine the two approaches with the ideas of a bisheaf and an isobisheaf.

\begin{defn}
Let $\COpen(X) \subseteq \Open(X)$ be the subposet consisting of path-connected
open sets.
A \define{bisheaf} \emph{around} $X$ is a triple
	$\bisheaf{F} := \big( \sheaf{F}, \cosheaf{F}, \Ffunc \big)$
where $\sheaf{F}$ is a sheaf over $X$, $\cosheaf{F}$ is a cosheaf under $X$,
and $\Ffunc := \big \{ \Ffunc(U): \sheaf{F}(U) \to \cosheaf{F}(U) \big \}_{U \in \COpen(X)}$ is a set of maps satisfying the following property.
For for each pair of open sets $V \subseteq U$ in $\COpen(X)$, the following diagram commutes:
	\begin{equation*}
	\xymatrix{
	\sheaf{F}(U) \ar[d]_{\Ffunc(U)} \ar[rr]^{\sheaf{F}(V \subseteq U )} && \sheaf{F}(V) \ar[d]^{\Ffunc(V)} \\
	\cosheaf{F}(U) && \cosheaf{F}(V) \ar[ll]^{\cosheaf{F}(V \subseteq U)}.
	}
	\end{equation*}
A \define{bisheaf map} $\ulbar{\alpha} : \bisheaf{F} \to \bisheaf{G}$ is a pair of maps
$\big( \ubar{\alpha}, \lbar{\alpha} \big)$ where
$\ubar{\alpha}: \sheaf{F} \to \sheaf{G}$ is a sheaf map and $\lbar{\alpha} : \cosheaf{G} \to \cosheaf{F}$
is a cosheaf map satisfying the following property.
For every path-connected open set $U \subseteq X$,
the following diagram commutes:
	\begin{equation*}
	\xymatrix{
	\sheaf{F}(U) \ar[d]_{\Ffunc(U)} \ar[rr]^{\ubar{\alpha}(U)}&& \sheaf{G}(U) \ar[d]^{\Gfunc(U)} \\
	\cosheaf{F}(U)  && \cosheaf{G}(U) \ar[ll]^{\lbar{\alpha}(U)}.
	}
	\end{equation*}
\end{defn}

\begin{defn}
\label{defn:bisheaves}
A bisheaf $\bisheaf{F} = \big( \sheaf{F}, \cosheaf{F}, \Ffunc \big)$ around $X$ 
is \define{$\Sstrat$-constructible} 
if both $\sheaf{F}$ and $\cosheaf{F}$ are $\Sstrat$-constructible.
A bisheaf is \define{constructible} if it is $\Sstrat$-constructible for some stratification
$(X, \Sstrat)$.
Let $\Bisheaf(X, \Sstrat)$ be the category of $\Sstrat$-constructible bisheaves
around $X$ and bisheaf maps.
Let $\Bisheaf(X)$ be the category of constructible bisheaves around
$X$ and bisheaf maps.
\end{defn}

When defining an $\Sstrat$-constructible bisheaf 
$\bisheaf{F} = ( \sheaf{F}, \cosheaf{F}, \Ffunc )$ around $X$, it is enough to
specify the sheaf, cosheaf, and maps between them on a subposet of $\Open(X)$.
Let $\Acat \subseteq \Basic(X, \Sstrat)$ be any subposet that is a basis for the topology on $X$.
For example, if $(X, \Sstrat, \Jcontrol)$ is a Thom-Mather space, then we may let
$\Acat$ be $\Basic(X, \Sstrat, \Jcontrol)$.
Let $\Gfunc : \Acat \to \Ab$ be a contravariant functor
such that for each pair $J \subseteq I$ associated to a common stratum, the map
$\Gfunc(J \subseteq I)$ is an isomorphism.
Let $\Hfunc : \Acat \to \Ab$ be a covariant functor
such that for each pair $J \subseteq I$ associated to a common stratum, the map
$\Hfunc(J \subseteq I)$ is an isomorphism.
Let $\big \{ \Qfunc(J) \big \}_{J \in \Acat}$ be a set of  maps
$\Qfunc(J) : \Gfunc(J) \to \Hfunc(J)$ such that for every $J \subseteq I$,
the following diagram commutes:
	\begin{equation*}
	\xymatrix{
	\Gfunc(I) \ar[d]^{\Qfunc(I)} \ar[rr]^{\Gfunc(J \subseteq I)} && \Gfunc(J) \ar[d]^{\Qfunc(J)} \\
	\Hfunc(I) && \Hfunc(J) \ar[ll]^{\Hfunc(J \subseteq I)}.
	}
	\end{equation*}
Given this data, we define $\bisheaf{F}$ as follows.
For an arbitrary open set $U \subseteq X$, let $\Acat(U) \subseteq \Acat$
be the subposet consisting of all open sets contained in $U$.
The $\Sstrat$-constructible sheaf $\sheaf{F}$ is generated by setting $\sheaf{F}(U) := \lim \Gfunc |_{\Acat}$
as we did in Section \ref{sec:sheaves}.
The $\Sstrat$-constructible cosheaf $\cosheaf{F}$ is generated by setting
$\cosheaf{F}(U) := \colim \Hfunc |_{\Acat}$ as we did in Section \ref{sec:cosheaves}.
Suppose $U$ is path-connected.
For every pair of open sets $J \subseteq I$ in $\Acat(U)$, the following diagram,
where $\pi_J$ is the canonical map from the limit and $\iota_J$ is the canonical map to the
colimit, commutes:
	\begin{equation*}
	\xymatrix{
	& \sheaf{F}(U) := \lim \Gfunc |_{\Acat(U)} \ar[ld]^{\pi_I} \ar[rd]^{\pi_J} & \\
	\Gfunc(I) \ar[d]^{\Qfunc(I)} \ar[rr]^{\Gfunc(J \subseteq I)} && \Gfunc(J) \ar[d]^{\Qfunc(J)} \\
	\Hfunc(I) \ar[dr]^{\iota_I} && \Hfunc(J) \ar[ld]^{\iota_J} \ar[ll]^{\Hfunc(J \subseteq I)} \\
	& \cosheaf{F}(U) := \colim \Hfunc |_{\Acat(U)}. & 
	}
	\end{equation*}
Define $\Ffunc(U) : \sheaf{F}(U) \to \cosheaf{F}(U)$ as any composition, for example $\iota_J \circ \Qfunc(J) \circ \pi_J$,
from the limit to the colimit.
Note that if $U$ is the disjoint union of infinitely many path-connected components, then there is no canonical 
map from the limit to the colimit.
	
\begin{ex}
\label{ex:bisheaf}
Let $f : Y \to M$ be a $(\Sstrat, \Jcontrol)$-constructible map to an oriented $m$-manifold $M$.
Recall the relative homology sheaf $\sheaf{F}_{\ast+m}$ of $f$ and
the ordinary homology cosheaf $\cosheaf{F}_\ast$ of $f$; see
Examples \ref{ex:relative_sheaf} and \ref{ex:ordinary_cosheaf} respectively.
Then there is a constructible bisheaf 
$$\bisheaf{F}_\ast := \Big( \sheaf{F}_{\ast + m} , \cosheaf{F}_\ast, \big \{ \Ffunc_\ast(U) \big \} \Big)$$
around $M$ where, for each $(\Sstrat, \Jcontrol)$-basic open $U$,
$\Ffunc(U)$ is a cap product constructed as follows.

Recall the local orientation cosheaf $\cosheaf{O}$ of $M$; see Example \ref{ex:orientation_cosheaf}.
Fix an orientation $o \in \cosheaf{O}(M)$.
Let $U \subseteq M$ be an $(\Sstrat, \Jcontrol)$-basic open and suppose $U$ is associated to a
stratum $S \in \Sstrat$.
Choose an $(\Sstrat, \Jcontrol)$-basic open $U' \subsetneq U$ that is also associated to $S$.
Then the the inclusion
$$\big( f^{-1}(U), f^{-1}(U) - f^{-1}(U') \big) \mono 
\big( Y, Y - f^{-1}(U') \big)$$
induces, by excision, an isomorphism on their relative singular (co)homology groups.
The inclusion
$$\big( Y, Y - f^{-1}(U) \big) \mono \big( Y, Y - f^{-1}(U') \big)$$
induces, by definition of a constructible map, an isomorphism on their singular relative (co)homology groups.
Thus the singular cap product
	\begin{equation*}
	\xymatrix{
	\Hfunc_{\ast +m}\big( f^{-1}(U), f^{-1}(U) - f^{-1}(U') \big)  \otimes  
	\Hfunc^m \big( f^{-1}(U), f^{-1}(U) - f^{-1}(U') \big)  \ar[r]^-{\frown}  
	& \Hfunc_\ast \big( f^{-1}(U) \big)
	}
	\end{equation*}
gives rise to a map
	\begin{equation*}
	\xymatrix{
	\sheaf{F}_{\ast +m}\big( U \big) \otimes  
	\cosheaf{F}^m \big( U \big)  \ar[r]^-{\frown}  
	& \cosheaf{F}_\ast \big( U \big)
	}
	\end{equation*}
where $\cosheaf{F}^m$ is the cosheaf of relative cohomology groups; see Example \ref{ex:relative_cosheaf}.
For any pair of $(\Sstrat, \Jcontrol)$-basic opens $V \subseteq U$,
we have the following diagram where the vertical 
maps are induced by inclusion:
	\begin{equation*}
	\label{eq:naturality}
	\xymatrix{
	\sheaf{F}_{\ast + m}(U)  \ar@<-5ex>[d]_{i} \otimes  \cosheaf{F}^{m}(U) 
	\ar[r]^<<<<<{\frown} & \cosheaf{F}_{\ast}(U) \\
	\sheaf{F}_{\ast + m}(V)   \otimes \cosheaf{F}^{m}(V)  \ar@<-5ex>[u]^{j}
	\ar[r]^<<<<<{\frown} & \cosheaf{F}_{\ast}(V) \ar[u]^{k} .
	}
	\end{equation*}
For any $\mu \in \sheaf{F}_{\ast + m}(U)$ and $c \in \cosheaf{F}^{m}(V)$, the cap product
satisfies
	\begin{equation}
	\label{eq:naturality}
	k \big( i(\mu) \frown c \big) = \mu \frown j(c).
	\end{equation}
Let $o_U := \cosheaf{O}^{-1}(U \subseteq M)(o)$ and
$o_V := \cosheaf{O}^{-1}(V \subseteq M)(o)$.
The map $f$ induces pull-backs
	\begin{equation*}
	\xymatrix{
	f^m_U : \cosheaf{O} \big ( U \big) \to \cosheaf{F}^m \big( U \big) &&
	f^m_V : \cosheaf{O} \big ( V \big) \to \cosheaf{F}^m \big( V \big).
	}
	\end{equation*}
By Equation \ref{eq:naturality}, the following diagram commutes:
	\begin{equation*}
	\label{eq:cap_square}
	\xymatrix{
	\sheaf{F}_{\ast + m}(U) \ar[d]_{\frown f^m_U(o_U)} \ar[rr]^i 
	&& \sheaf{F}_{\ast + m}(V) \ar[d]^{\frown f^m_V(o_V)} \\
	\cosheaf{F}_\ast(U)  && \cosheaf{F}_\ast(V). \ar[ll]_k
	}
	\end{equation*}
This triple of data, over all $(\Sstrat, \Jcontrol)$-basic opens, 
generates the $\Sstrat$-constructible bisheaf $\bisheaf{F}_{\ast}$.
\end{ex}

\begin{prop}
\label{prop:thom_isomorphism}
Let $f : Y \to M$ be a $(\Sstrat, \Jcontrol)$-constructible map to an oriented $m$-manifold $M$
and $\bisheaf{F}_{\ast}$ its bisheaf as constructed in Example \ref{ex:bisheaf}.
For a top dimensional stratum $S \in \Sstrat$, suppose the restriction $f |_{f^{-1}(S)} : f^{-1}(S) \to S$
is a fiber bundle over $S$.
Then for any $\Sstrat$-basic open $U \subseteq M$ associated to $S$, the cap product
$\Ffunc_\ast(U) : \sheaf{F}_{\ast + m}(U) \to \cosheaf{F}_\ast(U)$ is an isomorphism.
\end{prop}
\begin{proof}
Choose a point $p \in U$.
Since $U$ is contractible, $f |_{f^{-1}(S)}$ is a trivial bundle over $U$.
This means that there is a homeomorphism $h$ that makes the following diagram commute:
	\begin{equation*}
	\xymatrix{
	f^{-1}(U) \ar[rd]^f \ar[rr]^h && f^{-1}(p) \times U \ar[ld]^{\pi_2} \\
	& U. &
	}
	\end{equation*}
The projection $\pi_1 : f^{-1}(p) \times U \to f^{-1}(p)$ is an $m$-disk bundle over $f^{-1}(p)$ putting us
in the setting of the Thom isomorphism.
The element $\frown f^m_U(o_U) \in \cosheaf{F}^m(U)$ is a Thom class making the cap product
	\begin{equation*}
	\xymatrix{
	\sheaf{F}_{\ast + m}(U) \ar[rr]^{\frown f^m_U(o_U)} && \cosheaf{F}(U)
	}
	\end{equation*}
the Thom isomorphism.
Alternatively, we may view $\sheaf{F}_{\ast+m}(U)$ as the homology of the $m$-fold suspension
of $f^{-1}(p)$ making the cap product the suspension isomorphism.
\end{proof}

\begin{defn}
An $\Sstrat$-constructible bisheaf $\bisheaf{I} = \big( \sheaf{I}, \cosheaf{I}, \Ifunc \big)$ 
around $X$ is an \define{isobisheaf} if~$\sheaf{I}$
is an episheaf and $\cosheaf{I}$ is a monocosheaf.
\end{defn}

Let $\Loc(X)$ be the full subcategory of $\Sheaf(X)$
consisting of local systems.
Let $\Coloc(X)$ be the full subcategory of $\Cosheaf(X)$
consisting of colocal systems.
The two categories $\Loc(X)$ and $\Coloc(X)$ are equivalent.
The equivalence takes an $\Sstrat$-constructible local system $\sheaf{F}$ to the $\Sstrat$-constructible
colocal system $\cosheaf{F}$
generated by assigning to every $\Sstrat$-basic open $U$ the group $\sheaf{F}(U)$ and
by assigning to every pair $V \subseteq U$ of $\Sstrat$-basic opens the map $\sheaf{F}^{-1}(V \subseteq U)$.
Similarly, the equivalence takes an $\Sstrat$-constructible colocal system $\cosheaf{G}$ 
to the $\Sstrat$-constructible
local system $\sheaf{G}$ generated by assigning to every $\Sstrat$-basic open $U$ the group $\cosheaf{F}(U)$ and
by assigning to every pair $V \subseteq U$ of $\Sstrat$-basic opens the map $\cosheaf{F}^{-1}(V \subseteq U)$.

\begin{prop}
\label{prop:image_local_system}
Let $\bisheaf{I} = (\sheaf{I}, \cosheaf{I}, \Ifunc)$ be an $\Sstrat$-constructible isobisheaf around $X$.
Then the image $\image \Ifunc$, generated by the images of $\big \{ \Ifunc(U) \big\}$
over all $\Sstrat$-basic opens, 
is a colocal system under $X$ and the coimage $\coimage \Ifunc$, generated by the coimages of
$\big \{ \Ifunc(U) \big\}$ over all $\Sstrat$-basic opens, is a local system over $X$.
Furthermore, $\image \Ifunc$ is equivalent to $\coimage \Ifunc$.
\end{prop}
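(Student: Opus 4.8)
The plan is to work locally on a single $\Sstrat$-basic open and then glue, using that a locally constant sheaf (or cosheaf) is detected on the basis $\Basic(X,\Sstrat)$. Fix a pair of $\Sstrat$-basic opens $V \subseteq U$ associated to a common stratum. Since $\bisheaf{I}$ is $\Sstrat$-constructible, both $\sheaf{I}(V \subseteq U) : \sheaf{I}(U) \to \sheaf{I}(V)$ and $\cosheaf{I}(V \subseteq U) : \cosheaf{I}(V) \to \cosheaf{I}(U)$ are isomorphisms. First I would set $\image\Ifunc(U) := \image\big(\Ifunc(U)\big) \subseteq \cosheaf{I}(U)$ and $\coimage\Ifunc(U) := \sheaf{I}(U)/\ker\big(\Ifunc(U)\big)$, and likewise for every open set, with structure maps induced from those of $\cosheaf{I}$ and $\sheaf{I}$ respectively; the bisheaf commuting square guarantees these restrictions are well defined. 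The key observation is then that for $V \subseteq U$ associated to a common stratum, the naturality square for $\Ifunc$ together with the two isomorphisms above forces the induced maps $\image\Ifunc(U) \to \image\Ifunc(V)$ and $\coimage\Ifunc(V) \to \coimage\Ifunc(U)$ to be isomorphisms as well. Concretely: $\sheaf{I}(V\subseteq U)$ is iso, so it carries $\ker\Ifunc(U)$ onto $\ker\Ifunc(V)$ (by commutativity with $\cosheaf{I}(V\subseteq U)$, also iso), hence descends to an iso on coimages; dually for images.

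Next I would promote this ``iso on the basis'' statement to the assertion that $\image\Ifunc$ and $\coimage\Ifunc$ are genuinely constructible sheaves/cosheaves, and in fact locally constant. For this I invoke the episheaf/monocosheaf hypotheses: since $\sheaf{I}$ is an episheaf, every $\sheaf{I}(V\subseteq U)$ with $V\subseteq U$ basic (not just those over a common stratum) is surjective, so $\coimage\Ifunc(V\subseteq U)$ is surjective; since $\cosheaf{I}$ is a monocosheaf, every $\cosheaf{I}(V\subseteq U)$ is injective, so $\image\Ifunc(V\subseteq U)$ is injective. But we also have the commuting square relating the image maps to the coimage maps via the canonical factorization $\sheaf{I}(U) \twoheadrightarrow \coimage\Ifunc(U) \xrightarrow{\ \sim\ } \image\Ifunc(U) \hookrightarrow \cosheaf{I}(U)$ (this last iso is the standard first-isomorphism-theorem identification in $\Ab$, and it is natural in $U$ because $\Ifunc$ is a natural family of maps compatible with both sets of restriction maps). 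So for each basic $V\subseteq U$ the same abstract map is simultaneously a surjection (from the $\coimage$ side) and an injection (from the $\image$ side), hence an isomorphism. Therefore both $\image\Ifunc$ and $\coimage\Ifunc$ have all basic restriction maps isomorphisms, i.e.\ they are locally constant on $\Basic(X,\Sstrat)$, and the generation procedure described in \S\ref{sec:sheaves}--\ref{sec:cosheaves} extends them to honest locally constant sheaves/cosheaves on all of $X$, which under the identification of the introduction are local systems.

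Finally, the isomorphism $\image\Ifunc \cong \coimage\Ifunc$ is exactly the natural family of first-isomorphism-theorem maps $\coimage\Ifunc(U) \xrightarrow{\sim} \image\Ifunc(U)$ assembled over all $U$; naturality was already checked above, so this is an isomorphism of local systems. I expect the main obstacle to be the bookkeeping in the second paragraph: one must be careful that ``locally constant'' is really being verified for \emph{all} basic inclusions $V\subseteq U$ and not merely those over a common stratum, and the only leverage for the non-common-stratum inclusions is precisely the interplay of the episheaf surjectivity on one side with the monocosheaf injectivity on the other, mediated by the natural iso $\coimage\Ifunc \cong \image\Ifunc$ — so the argument is genuinely circular-looking and has to be set up so that constructibility of $\image\Ifunc$ and of $\coimage\Ifunc$ are extracted together rather than one from the other. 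A secondary technical point is confirming that $\ker\Ifunc$ is a subsheaf of $\sheaf{I}$ (not merely a subpresheaf) and $\image\Ifunc$ a subcosheaf of $\cosheaf{I}$, which follows from exactness properties of $\lim$ and $\colim$ over the covers appearing in the constructible setting, or more simply by checking the sheaf/cosheaf axiom directly on basic covers.
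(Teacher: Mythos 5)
Your proof is correct and rests on exactly the same ingredients as the paper's: the bisheaf square over an \emph{arbitrary} pair of basic opens $V\subseteq U$, with the episheaf condition supplying surjectivity of $\sheaf{I}(V\subseteq U)$ and the monocosheaf condition supplying injectivity of $\cosheaf{I}(V\subseteq U)$. The circularity you worry about at the end is not actually there: the paper's three-line proof reads off from that single square that $\cosheaf{I}(V\subseteq U)$ carries $\image\Ifunc(V)$ isomorphically onto $\image\Ifunc(U)$ and that $\sheaf{I}(V\subseteq U)$ descends to an isomorphism $\coimage\Ifunc(U)\to\coimage\Ifunc(V)$ (since $\ker\Ifunc(U)$ is precisely the preimage of $\ker\Ifunc(V)$), so each of $\image\Ifunc$ and $\coimage\Ifunc$ is seen to be a local system on its own, without routing one through the other via the first-isomorphism identification.
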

\begin{proof}
For a pair of $\Sstrat$-basic opens $V \subseteq U$, consider the following
commutative diagram:
	\begin{equation*}
	\xymatrix{
	\sheaf{I}(U) \ar[d]_{\Ifunc(U)} \ar@{->>}[rr]^{\sheaf{I}(V \subseteq U)} && \sheaf{I}(V) \ar[d]^{\Ifunc(V)} \\
	\cosheaf{I}(U) && \cosheaf{I}(V) \ar@{^{(}->}[ll]_{\cosheaf{I}(V \subseteq U)}.
	}
	\end{equation*}

We first show that $\cosheaf{I}(V \subseteq U)$ restricts to an isomorphism
from $\image \Ifunc(V)$ to $\image \Ifunc(U)$.	
For any element $c \in \image \Ifunc(V)$, there is, by surjectivity
of $\sheaf{I}(V \subseteq U)$, an element $a \in \sheaf{I}(U)$ 
such that $c = \Ifunc(V) \circ \sheaf{I}(V \subseteq U)(a)$.
This means that $\cosheaf{I}(V \subseteq U)$ restricts to a homomorphism
from $\image \Ifunc(V)$ to $\image \Ifunc(U)$.
Suppose $d = \Ifunc(U)(a)$.
Then there is a $c \in \cosheaf{I}(V)$,
namely $c = \Ifunc(V) \circ \sheaf{I}(V \subseteq U) \big)(a)$, such that
$d = \cosheaf{I}(V \subseteq U)(c)$.
This means that the restriction is surjective.
By injectivity of $\cosheaf{I}(V \subseteq U)$, the restriction is also injective.
These images and the maps between them
generate the colocal system $\image \Ifunc$.
	
We now show that $\sheaf{I}(V \subseteq U)$ quotients to an isomorphism
from $\coimage \Ifunc(U)$ to $\coimage \Ifunc(V)$.
For any element $a \in \ker \Ifunc(U)$, $\sheaf{I}(V \subseteq U)(a)$ is, by injectivity
of $\cosheaf{I}(V \subseteq U)$, an element of $\ker \Ifunc(V)$.
This means that $\sheaf{I}(V \subseteq U)$ quotients to a homomorphism
from $\coimage \Ifunc(U)$ to $\coimage \Ifunc(V)$.
For any $a \in \sheaf{I}(U)$, if $\sheaf{I}(V \subseteq U)(a)$ is in $\ker \Ifunc(V)$,
then $a \in \ker \Ifunc(U)$.
This means that the quotient map is injective.
By surjectivity of $\sheaf{I}(V \subseteq U)$, the quotient map
is also surjective.
These coimages and the maps between them generate the local system
$\coimage \Ifunc$.

By the first isomorphism theorem, $\image \Ifunc(U)$ is isomorphic
to $\coimage \Ifunc(U)$ for every $U$.
The colocal system $\image \Ifunc$ is therefore equivalent to the local system $\coimage \Ifunc$.
\end{proof}

Let $\bisheaf{F}$ be an $\Sstrat$-constructible bisheaf over $X$.
Epification of $\sheaf{F}$ and monofication of $\cosheaf{F}$ results in an isobisheaf
$\Iso\big( \bisheaf{F} \big) := \Big( \Epi\big( \sheaf{F} \big), 
	\Mono \big( \cosheaf{F} \big), \Iso (\Ffunc) := \lbar \eta \big( \cosheaf{F} \big) \circ 
	\Ffunc \circ \ubar \eta \big( \sheaf{F} \big) \Big);$
see Diagram \ref{dgm:isobisheaf_map}.
Consider a bisheaf map $\ulbar{\alpha} : \bisheaf{F} \to \bisheaf{G}$ in $\Bisheaf(X, \Sstrat)$.
The universal property of episheaves and monocosheaves induces a map of isobisheaves:
	\begin{equation}
	\label{dgm:isobisheaf_map}
	\begin{gathered}
	\xymatrix{
	\Epi \big( \sheaf{F} \big) \ar@{-->}[rr]^{\Epi ( \ubar{\alpha} )} \ar@{^{(}->}[d]_{\ubar{\eta}(\sheaf{F})}
	&& \Epi \big( \sheaf{G} \big) \ar@{^{(}->}[d]^{\ubar{\eta}(\sheaf{G})} \\
	\sheaf{F} \ar[rr]^{\ubar{\alpha}} \ar[d]_{\Ffunc}
	 && \sheaf{G} \ar[d]^{\Gfunc} \\
	\cosheaf{F} \ar@{->>}[d]_{\lbar{\eta}(\cosheaf{F})} && \cosheaf{G} \ar[ll]_{\lbar{\alpha}}  
	\ar@{->>}[d]^{\lbar{\eta}(\cosheaf{G})} \\ 
	\Mono \big( \cosheaf{F} \big) && \Mono \big( \cosheaf{G} \big) 
	\ar@{-->}[ll]_{\Mono ( \lbar{\alpha} )}.
	}
	\end{gathered}
	\end{equation}
Thus the assignment to each bisheaf its isobisheaf is functorial.

\begin{defn}
The \define{isofication} of $\Sstrat$-constructible bisheaves around $X$ is the functor
	$$\Iso : \Bisheaf(X, \Sstrat) \to \Bisheaf(X, \Sstrat)$$
that sends its bisheaf $\bisheaf{F}$ to its isobisheaf $\Iso \big( \bisheaf{F}\big)$.
Let $\ulbar{\eta} = \big( \ubar{\eta}, \lbar{\eta} \big): \id_{\Bisheaf(X, \Sstrat)} \Rightarrow \Iso$ be the
natural transformation induced by $\ubar{\eta}$ and $\lbar{\eta}$.
\end{defn}


\section{\'Etale Opens}
\label{sec:etale}
The idea of an \'etale open was introduced by Grothendieck in algebraic geometry 60 years ago as a 
natural generalization of an open set.  
For us, it is important to have persistent local systems $\mathcal L(U)$ (see Section \ref{sec:introduction}) 
not only for open sets $U$, but for \'etale opens as well.
While it is true that the image
$\image U$ of an \'etale open of $M$ is an open subset of $M$, it is not true that $\mathcal L(\image U)$ contains 
all the information of $\mathcal{L}(U)$.  
In fact $\mathcal L(\image U)$ can vanish while $\mathcal{L}(U)$ is still large.  
If $M = \Rspace$, then every path-connected \'etale open is a path-connected open set. 
This is another way in which  the $1$-dimensional case is much simpler.  

In this section we develop the notion of an \'etale open of a manifold $M$ without boundary.
In the last section, we saw that every constructible bisheaf around $M$ has associated to it 
a local system over $M$.
Now, we pull-back the bisheaf along any \'etale open $a : A \to M$ then use the same procedure to 
compute its persistent local system over $A$.
This gives us our collection of local systems one for every \'etale open of $M$ which constitutes finer information about the bisheaf.  

\begin{defn}
An \define{\'etale open} of $M$ is a continuous map
$a : A \to M$ from a Hausdorff, second countable space $A$ to $M$
that is locally a homeomorphism for every point of $A$.
An \define{\'etale map} $\mu: a \to b$ is a continuous map
$\mu : A \to B$ such that
the following diagram commutes:
	\begin{equation*}
	\xymatrix{
	A \ar[dr]_-{a} \ar[rr]^{\mu} && B \ar[ld]^-{b} \\
	& M. &
	}
	\end{equation*}
Let $\Etal(M)$ be the category of \'etale opens of $M$.
The initial object of $\Etal(M)$ is the empty \'etale open $\emptyset : \emptyset  \to M$
and the terminal object is the identity \'etale open $\id_M : M \to M$.
Note that every open set of $M$ is an \'etale open.
\end{defn}

Given a stratified space $(M, \Sstrat)$ and any \'etale open $a : A \to M$,
the stratification~$\Sstrat$ pulls-back along $a$ to a stratification
$a^\star \Sstrat$ (see Definition \ref{defn:stratifed_space}) of $A$ as follows.
The filtration
$\emptyset \subseteq M_0 \subseteq \cdots \subseteq M_n = M$
that gives rise to $\Sstrat$ lifts to a filtration
$\emptyset \subseteq A_0 \subseteq \cdots \subseteq A_n = A$
where $A_i = a^{-1}(M_i)$.
Every point $a \in A$ has a neighborhood $U$ such that the restriction $a |_U : U \to a(U)$
is a homeomorphism.
Since the point $a(p)$ is locally an open cone over a lower-dimensional stratified space,
the point $p$ is locally an open cone over the same lower-dimensional stratified space.

\begin{defn}
Let $(M, \Sstrat)$ be a stratified manifold.
An \'etale open $a : A \to M$ is \define{$\Sstrat$-constructible} if for 
every stratum $S \in \Sstrat$, $a^{-1}(S)$ is empty or 
the restriction $a |_{a^{-1}(S)} : a^{-1}(S) \to S$ is a covering space.
Let $\Etal(M, \Sstrat)$ be the category of $\Sstrat$-constructible \'etale opens.
\end{defn}
%
%
%
%
%
%


\begin{prop}
\label{prop:con-etale}
Let $(M, \Kstrat)$ be a triangulation of a manifold $M$.
For any \'etale open $a : A \to M$, there is an \'etale map $\mu : a \to b$ to an 
$\Kstrat$-constructible \'etale open $b$ satisfying the following universal property.
For any \'etale map $\nu : a \to c$, where~$c$ is a $\Kstrat$-constructible
\'etale open, there is a unique \'etale map $\eta : b \to c$ that makes the following
diagram commute:
	\begin{equation*}
	\xymatrix{
	a \ar[rd]_{\nu} \ar[r]^{\mu} & b \ar@{-->}[d]^{\eta} \\
	& c.
	}
	\end{equation*}
\end{prop}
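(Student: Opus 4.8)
The plan is to build $b$ by a ``constructible approximation'' of the étale open $a : A \to M$, mirroring the way one refines an arbitrary open set into a union of open stars. The key observation is that the property defining a $\Kstrat$-constructible étale open is essentially combinatorial: it asks that $A$ be triangulated compatibly with $K$ so that every simplex of the ``boundary'' part $L_0$ is a face of an interior simplex and every $(m-1)$-simplex of $L_0$ sits on exactly one $m$-simplex of $L - L_0$. First I would pass to the barycentric subdivision $\sd K$ (or iterated subdivisions) so that the open stars $\st\,\sigma$ for $\sigma \in \sd K$ form a good cover of $M$, and pull these back along $a$. Since $a$ is a local homeomorphism, for each point of $A$ there is a neighborhood mapping homeomorphically to an open star; the preimages $a^{-1}(|\st\,\sigma|)$ decompose into sheets, each identified via $a$ with $|\st\,\sigma| \subseteq |K|$. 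These sheets, glued according to $A$'s topology, assemble into a simplicial complex $L$ with a simplicial map $\psi : L \to \sd K$, and a homeomorphism $\chi$ of the appropriate open subcomplex onto $A$; this gives a candidate $\Kstrat$-constructible étale open $b : |L - L_0| \to M$ together with the étale map $\mu : a \to b$.

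The three bulleted conditions must then be arranged. The first (commuting square) holds by construction since $\psi$ is built to cover $a$. For the second and third, the subtlety is that $A$ need not be a manifold and $a$ need not be proper, so the ``frontier'' $L_0$ must be chosen carefully: I would let $L - L_0$ consist of exactly those simplices of $L$ whose image under $\psi$ lies in $K - K_0$ and which actually arise as sheets over open stars of $A$, and let $L_0$ collect the limit simplices needed to make $|L - L_0|$ an open subcomplex. The condition that each $(m-1)$-simplex of $L_0$ be a face of a single $m$-simplex of $L - L_0$ is where the local-homeomorphism hypothesis does real work: at a boundary $(m-1)$-cell, $a$ being étale forces the sheet structure to be ``one-sided'' in a precise sense, so exactly one top-dimensional sheet limits onto it. Establishing this local normal-form statement — that near any point of the closure of $A$ inside the relevant completion, $a$ looks like the inclusion of a union of open stars missing some codimension-one faces — is the step I expect to be the main obstacle, since it requires controlling how distinct sheets can accumulate.

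Finally, for the universal property: given any étale map $\nu : a \to c$ with $c$ a $\Kstrat$-constructible étale open $c : |L' - L'_0| \to M$, I would produce $\eta : b \to c$ simplicially. Because $\mu$ is an isomorphism onto its image away from the added frontier and both $b$ and $c$ are assembled from sheets over the same open stars of $\sd K$, the map $\nu$ prescribes $\eta$ uniquely on $A \subseteq |b|$; one then checks it extends continuously (indeed simplicially, after a common subdivision) over the frontier $L_0$, using that every simplex of $L_0$ is a face of an interior simplex (the second bullet) so its image is forced by continuity, and using the single-$m$-simplex condition (the third bullet) to rule out ambiguity along codimension-one strata. Uniqueness is then immediate since $A$ is dense in $|b|$ and $c$ is Hausdorff. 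I would assemble these pieces into a functor from $\Etal(M)$ to $\Etal(M, \Kstrat)$ left adjoint to the inclusion, which is exactly the content of the proposition; the only genuinely delicate point remains the local sheet-normalization underlying the third bullet.
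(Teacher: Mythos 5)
Your first step---pulling the triangulation back along $a$ and assembling the resulting sheets into a simplicial complex with a simplicial map to $K$---is essentially the paper's first step: each stratum of $a^\star\Kstrat$ is replaced by a copy of the simplex it maps to, giving a poset $L'$ which is then completed to a simplicial complex $\tilde L$ with $\tilde\psi:\tilde L\to K$. But the step you yourself flag as ``the main obstacle'' is a genuine gap, and the way you propose to close it is not how it gets closed. You hope to prove a local normal form asserting that, because $a$ is \'etale, exactly one top-dimensional sheet can limit onto each added boundary $(m-1)$-cell, so that the third bullet of the definition of a $\Kstrat$-constructible \'etale open holds automatically. The paper proves no such statement; instead it \emph{forces} the required behavior by passing to a quotient: two simplices of $\tilde L$ are identified whenever they are joined by a zig-zag of face relations along which the images under $\tilde\psi$ agree termwise, and $b$ is defined on $L:=\tilde L/\!\sim$. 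This quotient is not a technical afterthought. It is simultaneously what repairs the failure of $|\tilde\psi|$ to be a $\Kstrat$-constructible \'etale open and what makes the universal property true, because the identifications it imposes are exactly those that \emph{any} \'etale target is forced to make: a zig-zag of this type inside $c^\star\Kstrat$ would contradict $c$ being a local homeomorphism.

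This also undermines your argument for the universal property. You assume $\mu$ is an isomorphism onto its image away from the frontier and then get $\eta$ by density and continuity. Neither assumption holds: a stratum $S$ of $a^\star\Kstrat$ is a connected component of $a^{-1}(\sigma)$, and $a|_S:S\to\sigma$ need be neither injective nor surjective, so $\mu$ can identify disjoint sheets of $A$ and $\mu(A)$ need not be dense in $|L-L_0|$. Existence of $\eta$ therefore requires showing that whenever $\mu(x)=\mu(x')$ one also has $\nu(x)=\nu(x')$ for \emph{every} $\nu$ to a constructible \'etale open $c$; continuity plus Hausdorffness of $c$ cannot give this for points lying in disjoint sheets of $A$, whereas the zig-zag characterization of $\sim$ gives it immediately, and the paper then simply sets $\eta(\sigma):=\nu\big(\mu^{-1}(\sigma)\big)$ stratum by stratum. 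In short: right scaffolding, but the one step you could not supply is the actual content of the proof, and the intended mechanism (a combinatorial quotient, not a local structure theorem about accumulating sheets) is different from the one you propose.
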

\begin{proof}
The map $a$ takes the poset of strata $a^\star \Kstrat$ to the poset of strata $\Kstrat$.
Denote by $\tilde a : a^\star \Kstrat \to \Kstrat$ this poset map.
Let $E := \bigsqcup_{\sigma \in a^\star \Kstrat} \st\; \tilde a(\sigma)$ be the disjoint union of open
stars of strata in~$\Kstrat$
and denote by $e : E \to M$ the map that takes each point $x \in E$ to its original copy in $M$.
Note that there may be many strata in $a^\star \Kstrat$ that map to a single stratum $\tau \in \Kstrat$.
In this case, $E$ has $n$ copies of $\st\; \tau$ where $n$ is the number of strata in $a^\star \Kstrat$
that map to~$\tau$.
We now glue together the pieces of $E$.
Consider two points $x, y \in E$ and suppose $x \in \sigma_1$ and $y \in \sigma_2$
for strata $\sigma_1, \sigma_2 \in a^\star \Kstrat$.
Let us say $x$ and $y$ are related, $x \sim y$, if $e(x) = e(y)$ and $\sigma_1 \leq \sigma_2$
or $\sigma_2 \leq \sigma_1$.
The relation $\sim$ is reflexive and symmetric but not transitive.
Take the transitive closure of $\sim$, let $D := \sfrac{E}{\sim}$ be the quotient space,
and let $d : D \to M$ the quotient of $e$.
Note that every point $x \in D$ has an open neighborhood $U$ such that the restriction
$d |_U : U \to d(U)$ is a homeomorphism.
In other words, $d$ is a local homeomorphism.
Furthermore, for every stratum $\tau \in \Kstrat$, $d^{-1}(\tau)$ is either empty or a covering space
over $\tau$.
However, $D$ may not be Hausdorff; see the example in Figure \ref{fig:constructible}.
We remedy this problem by taking a second quotient as follows.
Let us say two points $p ,q \in D$ are related, $p \approx q$, if there is a path
$\gamma : [0,1] \to D$ satisfying the following two properties: $\gamma(0) = p$ and $\gamma(1) = q$,
and $d \circ \gamma(t) = d \circ \gamma(1- t)$ for all $0 \leq t \leq \sfrac{1}{2}$.
The relation $\approx$ is reflexive and symmetric but not transitive.
Take the transitive closure of $\approx$, let $B := \sfrac{D}{\approx}$, 
and let $b : B \to M$ be the induced continuous map.
The map $b$ is a $\Kstrat$-constructible \'etale open.
Let $\mu : A \to B$ be the composition of the continuous inclusion $A \to D$ followed by the quotient map $D \to B$.

We now prove the universal property.
Suppose $c$ is a $\Kstrat$-constructible \'etale open and $\nu : a \to c$ is given.
For every stratum $\tau \in b^\star \Kstrat$, $\mu^{-1}(\tau)$ is non-empty, by construction of $b$.
Furthermore, every stratum of $a^\star \Kstrat$ contained in $\mu^{-1}(\tau)$ maps, via $\nu$, to a single stratum in $c^\star \Kstrat$
because otherwise, $C$ could not be Hausdorff.
Let $\eta : B \to C$ be the unique map induced by sending every stratum $\tau \in b^\star \Kstrat$ to the stratum
$\nu \circ \mu^{-1}(\tau)$.
For any open set $U \subseteq C$, $\nu^{-1}(U)$ is open because $\nu$ is continuous.
Since $\mu$ is the composition of an open inclusion followed by a quotient map, $\mu$ maps
$\nu^{-1}(U)$ to an open set.
This makes $\eta$ continuous.
\end{proof}

\begin{ex}
Consider the triangulation $\Kstrat$ of $\Rspace^2$ illustrated in Figure~\ref{fig:constructible}.
Let $a : A \to \Rspace^2$ be the \'etale open in red.
Then its universal $\Kstrat$-constructible \'etale open $b : B \to \Rspace^2$ is in yellow.

\begin{figure}
\centering
\includegraphics{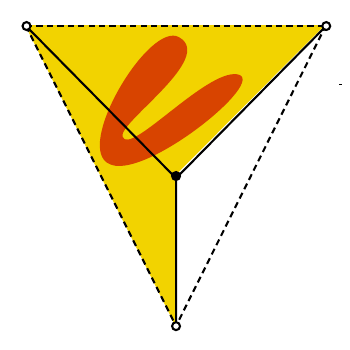}
\caption{Consider the triangulation $\Kstrat$ of the plane $\Rspace^2$ above.
The yellow \'etale open is the universal $\Kstrat$-constructible \'etale open containing the red \'etale open.}
\label{fig:constructible}
\end{figure}
\end{ex}

Given an $\Sstrat$-constructible bisheaf $\bisheaf{F} = (\sheaf{F}, \cosheaf{F}, \Ffunc)$ around $M$ and 
an \'etale
open $a : A \to M$, we may pull $\bisheaf{F}$ back to a $a^\star \Sstrat$-constructible
bisheaf $a^\star \bisheaf{F} = (a^\star \sheaf{F}, a^\star \cosheaf{F}, a^\star \Ffunc)$ around $A$ as follows.
Choose a subposet $\Acat \subseteq \Basic(A, a^\star \Sstrat)$ such that $\Acat$ is a basis for the topology
on $A$ and $a(J)$ is an $\Sstrat$-basic open for all $J \in \Acat$.
Let $\Gfunc : \Acat \to \Ab$ be the contravariant functor that assigns to
each open set $J$ the object $\Gfunc(J) := \sheaf{F}(a(J))$ and to every pair
$J \subseteq I$ the morphism $\sheaf{F}(a(J) \subseteq a(I))$.
Let $\Hfunc : \Acat \to \Ab$ be the covariant functor that assigns to each
open set $J$ the object $\cosheaf{F}(a(J))$ and to every pair $J \subseteq I$ the morphism
$\cosheaf{F}(a(J) \subseteq a(I))$.
Let $\big \{ \Qfunc(J) \big \}_{J \in \Acat}$ be the set of maps 
$\Qfunc(J) : \Gfunc(J) \to \Hfunc(J)$ where $\Qfunc(J) := \Ffunc(a(J))$.
This data, as discussed in Section~\ref{sec:bisheaves}, generates the bisheaf $a^\star \bisheaf{F}$.
For a morphism $\mu : a \to b$ of \'etale opens, the two bisheaves $a^\star \bisheaf{F}$
and $\mu^\star b^\star \bisheaf{F}$ around $A$ are isomorphic.

\section{Isobisheaf Stacks}
\label{sec:stacks}
We finally get to the central construction of this paper: isobisheaf stacks.  
Given a constructible bisheaf over a manifold $M$, we now have a local system
for each \'etale open of $M$.
Here we assemble these local systems into a stack.  The advantage is that the isobisheaf stack has good functorial properties which are useful, for example, in proving stability.  
The whole construction of the persistent local systems can be thought of this way:
$$\genfrac\{\}{0pt}0{\mbox{Maps }}{\mbox{} X\to M }
\longrightarrow 
\genfrac\{\}{0pt}0{\mbox{Bisheaves}}{\mbox{around } M } 
\longrightarrow 
\genfrac\{\}{0pt}0{\mbox{Isobisheaf stacks}}{\mbox{ around } M }\longrightarrow 
\genfrac\{\}{0pt}0{\mbox{Local systems for}}{\mbox{each \'etale open of }M} .
$$
\vspace{1em}

\begin{defn} \label{defn:isobisheaf_stack}
Let $(M, \Sstrat)$ be a stratified manifold.
An \define{$\Sstrat$-constructible isobisheaf stack} $\Fstack$ \emph{around} $M$ 
is the assignment to $\Etal(M)$ the following data satisfying the following
axiom:
	\begin{itemize}
	
	\item To each \'etale open $a: A \to M$, 
	$\Fstack(a)$ is an $a^\star \Sstrat$-constructible isobisheaf 
	$\big( \sheaf{F}_a, \cosheaf{F}_a, \Ffunc_a \big)$.
	
	\item To each \'etale map $\mu : a \to b$, $\Fstack(\mu) : 
	\mu^\star \Fstack(b) \to \Fstack(a)$ is a bisheaf map
		\begin{equation}
		\label{dgm:stack}
		\begin{gathered}
		\xymatrix{
		\mu^\star \sheaf{F}_b \ar[d]_{\mu^\star \Ffunc_b} \ar@{^{(}->}[rr]^{\uFstack(\mu)} 
		&& \sheaf{F}_a \ar[d]^{\Ffunc_a} \\
		\mu^\star \cosheaf{F}_b && \cosheaf{F}_a \ar@{->>}[ll]^{\lFstack(\mu)}
		}
		\end{gathered} 
		\end{equation}
	where $\uFstack(\mu)$ is injective and $\lFstack(\mu)$ is surjective for all open sets.
	
	\item For each pair of \'etale maps $\mu : a \to b$ and $\nu : b \to c$,
	$\Fstack(\nu \circ \mu) = \mu^\star \Fstack(\nu) \circ \Fstack(\mu)$.
	
	\end{itemize}
By Proposition \ref{prop:image_local_system}, the colocal system $\image \Ffunc_a$ under $M$
is equivalent to the local system $\coimage \Fstack(a)$ over $M$.
Let $\image \Fstack(a) := \image \Ffunc_a$ be the \define{persistent colocal system} of $\Fstack$ under $a$
and let $\coimage \Fstack(a) := \coimage \Fstack(a)$ be the \define{persistent local system} of $\Fstack$ over $a$.

Let $\Fstack$ be an $\Sstrat$-constructible isobisheaf around $M$ and $\Gstack$
a $\Tstrat$-constructible isobisheaf around $M$ where $\Sstrat$ and $\Tstrat$ may be different.
A \define{map of constructible isobisheaf stacks} $\ulbar \Phi : \Fstack \to \Gstack$
is the following data satisfying the following axiom:
	\begin{itemize}
	\item To each \'etale open $a : A \to M$, $\ulbar \Phi(a) : \Fstack(a) \to \Gstack(a)$
	is a bisheaf map
		\begin{equation}
		\label{dgm:stack_map}
		\begin{gathered}
		\xymatrix{
		\sheaf{F}_a \ar[d]_{\Ffunc_a} \ar[rr]^{\ubar \Phi(a)} 
		&& \sheaf{G}_a \ar[d]^{\Gfunc_a} \\
		\cosheaf{F}_a && \cosheaf{G}_a \ar[ll]^{\lbar \Phi(a)}.
		}
		\end{gathered}
		\end{equation}
	Note there are no conditions on $\ubar \Phi(a)$ and $\lbar \Phi(a)$ other than
	that the diagram commutes for every open set of $A$.
	
	\item For each \'etale map $\mu : a \to b$, the following diagram commutes
	for every open set of $A$:
	
	\begin{equation*}
	\label{dgm:stack_map_map}
	\begin{gathered}
	\xymatrix{
	\mu^\star \sheaf{F}_b \ar[ddd]_{\mu^\star \Ffunc_b} \ar[rrrr]^{\mu^\star \ubar \Phi(b)} 
	\ar@{^{(}->}[rd]^{\uFstack(\mu)}
	&&&& \mu^\star \sheaf{G}_b \ar[ddd]^{\mu^\star \Gfunc_b} 
	\ar@{^{(}->}[ld]^{\uGstack(\mu)} \\
	& \sheaf{F}_a \ar[d]_{\Ffunc_a} \ar[rr]^{ \ubar \Phi(a)} 
		&& \sheaf{G}_a \ar[d]^{\Gfunc_a}  & \\
	& \cosheaf{F}_a \ar@{->>}[ld]^{ \lFstack(\mu)} 
	&& \cosheaf{G}_a \ar[ll]^{\lbar \Phi(a)} 
	\ar@{->>}[rd]^{\lGstack(\mu)} & \\
	\mu^\star \cosheaf{F}_b &&&& \mu^\star \cosheaf{G}_b \ar[llll]^{\mu^\star  \lbar \Phi(b)}.
	}
	\end{gathered}
	\end{equation*}
	\end{itemize}
Let $\Stack(X)$ be the category consisting of isobisheaf stacks around $M$, each constructible
with respect to some stratification of $M$, and constructible isobisheaf stack maps.
\end{defn}

Given an $\Sstrat$-constructible isobisheaf stack $\Fstack$ around $M$, we have a persistent
colocal system $\image \Fstack(a)$ for each \'etale open $a : A \to M$.
For an \'etale map $\mu : a \to b$, the two colocal systems $\image \Fstack(a)$
and $\image \Fstack(b)$ are related by Diagram \ref{dgm:stack}.
Let $\cosheaf{I} := \image \big(\Ffunc_a \circ \uFstack(\mu) \big)$ and
$\cosheaf{K} := \cosheaf{I} \cap \ker \lFstack(\mu)$.
Then 
	\begin{equation*}
	\xymatrix{
	\image \mu^\star \Fstack(b) & \ar@{->>}[l]_-{/ \cosheaf{K}} \cosheaf{I}
	\ar@{^{(}->}[r] & \image \Fstack(a).
	}
	\end{equation*}
In other words, the data $\image \mu^\star \Fstack(b)$ \emph{persists} in $\image \Fstack(a)$
as a quotient of a sub-colocal system.
Given a stack map $\Phi : \Fstack \to \Gstack$ and an \'etale open $a$,
the two colocal systems $\image \Fstack(a)$ and $\image \Gstack(a)$ are related
by Diagram \ref{dgm:stack_map}.
Thus
	\begin{equation}
	\label{eq:subquotient}
	\xymatrix{
	\image \Fstack(a) & \ar@{->>}[l]_-{/ \cosheaf{K}} \cosheaf{I}
	\ar@{^{(}->}[r] & \image  \Gstack(a).
	}
	\end{equation}
where $\cosheaf{K}$ and $\cosheaf{I}$ are defined similarly.
As we will see in Section \ref{sec:stability}, this observation implies that persistent
colocal systems satisfy the property \emph{Stability} of Section \ref{sec:introduction}.

\begin{ex}
\label{ex:stack_example}
A constructible bisheaf $\bisheaf{F}$ over a manifold $M$ gives rise to a constructible
isobisheaf stack $\Fstack$ as follows.
For each \'etale open $a :A \to M$, let $\Fstack(a) := \Iso \big( a^\star \bisheaf{F} \big)$.
For an \'etale map $\mu : a \to b$, we have the following commutative diagram where the top and bottom horizontal
maps are induced by the universal property of $\Epi$ and $\Mono$ respectively:
	\begin{equation}
	\label{dgm:quillen}
	\begin{gathered}
	\xymatrix{
	\mu^\star \Epi \big( b^\star \sheaf{F} \big) \ar@{^{(}->}[rr]^{\ubar{\beta}} 
	\ar@{^{(}->}[d]_{\mu^\star \ubar \eta ( b^\star \sheaf{F} ) } 
	&& \Epi \big( a^\star \sheaf{F} \big) \ar@{^{(}->}[d]^{\ubar \eta ( a^\star \Ffunc )} \\
	\mu^\star b^\star \sheaf{F}  \ar[rr]^{\cong} 
	\ar[d]_{\mu^\star  b^\star \Ffunc } 
	&& a^\star \sheaf{F}  \ar[d]^{a^\star \Ffunc} \\
	\mu^\star b^\star \cosheaf{F}   
	\ar[d]_{\mu^\star \lbar \eta (b^\star \cosheaf{F}) } 
	&& \ar[ll]^{\cong} a^\star \cosheaf{F}  \ar[d]^{\lbar \eta (a^\star \cosheaf{F}) )} \\
	\mu^\star \Mono \big( b^\star \cosheaf{F} \big) && \ar@{->>}[ll]^{\lbar{\beta}} \Mono 
	\big( a^\star \cosheaf{F} \big).
	}
	\end{gathered}
	\end{equation}
Both $\mu^\star \Epi \big( b^\star \sheaf{F} \big)$ and $\Epi \big( a^\star \sheaf{F} \big)$
are sub-episheaves of $\mu^\star b^\star \sheaf{F} \cong a^\star \sheaf{F}$ and the latter
is maximal.
This makes the sheaf map $\ubar \beta$ injective on every open set.
Both  $\mu^\star \Mono \big( b^\star \cosheaf{F} \big)$ and $\Mono \big( a^\star \cosheaf{F} \big)$
are quotient-monocosheaves of $\mu^\star b^\star \cosheaf{F} \cong a^\star \cosheaf{F}$
and the later is minimal.
This makes the cosheaf map $\lbar \beta$ surjective on every open set.
Let $\Fstack(\mu) := \big( \ubar{\beta}, \lbar{\beta} \big)$.

Note that if the bisheaf $\bisheaf{F}$ is constructed from a constructible map $f$ as in Example~\ref{ex:bisheaf},
then every persistent colocal system is a subquotient of the homology of the fibers of $f$.
In other words, persistent colocal systems satisfy property \emph{Relation to homology of fibers}
of Section \ref{sec:introduction}.

A bisheaf map $\ulbar \alpha : \bisheaf{F} \to \bisheaf{G}$ gives rise
to a map of isobisheaf stacks $\ulbar \Phi : \Fstack \to \Gstack$ as follows.
Recall Diagram \ref{dgm:isobisheaf_map}.
For each \'etale open $a : A \to M$, 
replace the bisheaf $\bisheaf{F}$ in Diagram \ref{dgm:isobisheaf_map}
with $a^\star \bisheaf{F}$ and replace the bisheaf $\bisheaf{G}$
with $a^\star \bisheaf{G}$ to get the following diagram:
	\begin{equation*}
	\begin{gathered}
	\xymatrix{
	\Epi \big( a^\star \sheaf{F} \big) \ar[rr]^{\Epi ( a^\star  \ubar{\alpha} )} 
	\ar@{^{(}->}[d]_{\ubar{\eta}(a^\star \sheaf{F})}
	&& \Epi \big( a^\star \sheaf{G} \big) \ar@{^{(}->}[d]^{\ubar{\eta}(a^\star \sheaf{G})} \\
	a^\star \sheaf{F} \ar[rr]^{a^\star  \ubar{\alpha}} \ar[d]_{a^\star \Ffunc}
	 && a^\star \sheaf{G} \ar[d]^{a^\star \Gfunc} \\
	a^\star \cosheaf{F} \ar@{->>}[d]_{\lbar{\eta}(a^\star \cosheaf{F})} && a^\star \cosheaf{G} \ar[ll]_{a^\star \lbar{\alpha}}  
	\ar@{->>}[d]^{\lbar{\eta}(a^\star \cosheaf{G})} \\ 
	\Mono \big(a^\star \cosheaf{F} \big) && \Mono \big(a^\star \cosheaf{G} \big) 
	\ar[ll]_{\Mono ( a^\star  \lbar{\alpha} )}.
	}
	\end{gathered}
	\end{equation*}
For each \'etale open $a : A \to M$, let $\ubar \Phi(a) := \Epi(a^\star \ubar \alpha)$ and let
$\lbar \Phi(a) := \Mono(a^\star \ubar \alpha)$.
\end{ex}

\begin{prop}
\label{prop:constructible_stack}
Let $(M, \Kstrat)$ be a triangulation of a manifold $M$,
$\bisheaf{F}$ an $\Kstrat$-constructible bisheaf over $M$, and
$\Fstack$ its $\Kstrat$-constructible isobisheaf stack.
For an \'etale open $a : A \to M$, let $\mu : a \to b$
be the universal \'etale map to a $\Kstrat$-constructible \'etale open $b$
in the sense of Proposition \ref{prop:con-etale}.
Then $\Fstack (a) \cong \mu^\star \Fstack (b)$.
\end{prop}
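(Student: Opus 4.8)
The plan is to reduce everything to the behavior of the underlying sheaf $\sheaf{F}$ and cosheaf $\cosheaf{F}$, and to exploit the universal property of $\mu : a \to b$ from Proposition~\ref{prop:con-etale}. First I would show that the pullback of a $\Kstrat$-constructible (co)sheaf along the universal \'etale map is insensitive to $\mu$: more precisely, for $a^\star\sheaf{F}$ and $a^\star\cosheaf{F}$ I claim the natural maps $\mu^\star b^\star \sheaf{F} \to a^\star \sheaf{F}$ and $a^\star\cosheaf{F} \to \mu^\star b^\star\cosheaf{F}$ are isomorphisms. Since $\Fstack(a) = \Iso(a^\star\bisheaf{F})$ is built functorially out of $a^\star\sheaf{F}$, $a^\star\cosheaf{F}$ and $a^\star\Ffunc$ by applying $\Epi$ and $\Mono$ (Example~\ref{ex:stack_example}), an isomorphism of the input bisheaves $a^\star\bisheaf{F} \cong \mu^\star b^\star\bisheaf{F}$ immediately yields $\Fstack(a) \cong \mu^\star\Fstack(b)$, because $\Epi$ and $\Mono$ are functors and $\mu^\star$ commutes with them (this last point is exactly the content of Diagram~\ref{dgm:quillen}).

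The heart of the matter is therefore the claim that the canonical comparison maps on the sheaf and cosheaf levels are isomorphisms. Here I would use the explicit description of $\mu$ given in the proof of Proposition~\ref{prop:con-etale}: $b : B \to M$ is obtained from $A$ by first completing each top simplex of $a^\star\Kstrat$ to a full simplex and then quotienting by the relation $\sim$ that collapses chains of face-related simplices lying over a common simplex of $K$. A $\Kstrat$-constructible sheaf $\sheaf{F}$ is determined by its values on open stars $\st\,\sigma$ and has isomorphisms along all inclusions within a single stratum; pulling back along the local homeomorphism $a$, the value of $a^\star\sheaf{F}$ on a $a^\star\Kstrat$-basic open only depends on the image stratum, and the completion/quotient operations defining $b$ do not change which strata appear or the incidence data relevant to open stars over $M$. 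So the comparison map is an isomorphism stalkwise (equivalently, on open stars), hence an isomorphism of constructible sheaves; dually for the cosheaf, using colimits over open stars instead of limits. I would phrase this as: $\mu^\star$ identifies the poset of open stars of $a^\star\Kstrat$ with that of $b^\star\Kstrat$ in a way compatible with the functors defining $b^\star\sheaf{F}$ and $b^\star\cosheaf{F}$, and then invoke the uniqueness-up-to-isomorphism of a constructible (co)sheaf from its restriction to open stars, as recalled in \S\ref{sec:sheaves}--\ref{sec:cosheaves}.

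The main obstacle I expect is bookkeeping rather than conceptual: making precise that the two combinatorial modifications used to build $b$ from $A$ — completing $m$-simplices by adding faces in $\tilde L_0$, and collapsing along $\sim$ — are exactly the operations that do nothing to the pulled-back constructible data. The completion step adds simplices that lie in $L_0$ (the "removed" part), so they contribute nothing to $|L - L_0|$ and nothing to values of (co)sheaves on basic opens; the quotient step identifies simplices that an \'etale open is forced to have the same (co)sheaf value on anyway (the local-homeomorphism condition forces the constructible data to be constant along exactly the chains that $\sim$ collapses). I would isolate these two points as small lemmas about the map $\mu$ at the level of posets of strata, then feed them into the functoriality argument of the first paragraph. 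Once those are in place, the conclusion $\Fstack(a) \cong \mu^\star\Fstack(b)$, and more precisely compatibility with the isobisheaf structure $\Ffunc$, is formal from Diagram~\ref{dgm:quillen}.
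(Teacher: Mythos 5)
Your proposal locates the difficulty in the wrong place. The isomorphisms $\mu^\star b^\star\sheaf{F}\cong a^\star\sheaf{F}$ and $a^\star\cosheaf{F}\cong\mu^\star b^\star\cosheaf{F}$ that you call ``the heart of the matter'' are automatic: since $b\circ\mu=a$, functoriality of pullback gives $\mu^\star b^\star = a^\star$, which is why the middle horizontal maps of Diagram~\ref{dgm:quillen} are already marked $\cong$ without comment. The genuine content of the proposition is the step you dismiss as formal, namely that $\mu^\star$ commutes with $\Epi$ and $\Mono$. Diagram~\ref{dgm:quillen} does \emph{not} assert this; it only produces a canonical injection $\mu^\star\Epi(b^\star\sheaf{F})\mono\Epi(a^\star\sheaf{F})$ and a canonical surjection $\Mono(a^\star\cosheaf{F})\epi\mu^\star\Mono(b^\star\cosheaf{F})$, and for a general \'etale map these are not isomorphisms. (For instance, if $A$ is an open subset of $M$ meeting fewer strata, the epi condition over $A$ involves fewer pairs of basic opens, so the maximal sub-episheaf of $a^\star\sheaf{F}$ can be strictly larger than the restriction of $\Epi(\sheaf{F})$.) If commutation held for every \'etale map, the persistence stack of Section~\ref{sec:stacks} would consist entirely of isomorphisms and the proposition would be vacuous; its point is precisely that commutation holds for the \emph{universal} map $\mu$, and that is what must be proved.

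Your third paragraph does contain the right raw material --- the completion step only adds simplices lying in the removed part, and the collapse $\sim$ identifies simplices whose pulled-back restriction maps are canonically the same, both being pullbacks of a single map over $M$ --- but you apply these observations to the underlying sheaf and cosheaf, where nothing needs proving. The paper uses them one level up: it shows that $\Iso(\tilde a^\star\bisheaf{F})$ restricts to $\Iso(a^\star\bisheaf{F})$ because the completed simplices are contractible, and that identifying $\sigma$ with $\sigma'$ when $\sigma\sim\sigma'$ identifies $\Epi(\tilde a^\star\sheaf{F})(\st\;\sigma)$ with $\Epi(\tilde a^\star\sheaf{F})(\st\;\sigma')$, and likewise for $\Mono$ on the cosheaf side --- that is, the maximal sub-episheaf and the minimal quotient-monocosheaf are themselves invariant under $\sim$ and therefore descend to an isobisheaf over $B$ whose pullback along $\mu$ is $\Iso(a^\star\bisheaf{F})$. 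That descent argument for $\Epi$ and $\Mono$, rather than for $\sheaf{F}$ and $\cosheaf{F}$, is the step missing from your write-up.
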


\begin{proof}
Since every stratum of $\Kstrat$ is contractible, every stratum of $a^\star \Kstrat$
is also contractible.
The bisheaf $a^\star \bisheaf{F}$ is completely determined (up to an isomorphism) by the assignment to each
stratum $\sigma \in a^\star \Kstrat$ the map 
$\bisheaf{F}(\st\; a(\sigma )) : \sheaf{F}(\st\; a(\sigma)) \to \cosheaf{F}(\st\; a(\sigma))$
and to each relation $\sigma \leq \tau$ the bisheaf map 
$\big( \sheaf{F}( \st\; a(\tau) \subseteq \st\; a(\sigma) ) , \cosheaf{F}( \st\; a(\tau) \subseteq \st\; a(\sigma) \big)$.
For convenience, we will simply identity $a^\star \bisheaf{F}$ with this stratum-wise assignment.
The bisheaf $b^\star \bisheaf{F}$ is determined similarly since every stratum of $b^\star \Kstrat$ is contractible.
Consider Diagram \ref{dgm:quillen}.
We prove the claim by showing that $\ubar \beta$ is surjective and $\lbar \beta$ is injective.

For very stratum $\sigma \in b^\star \Kstrat$, there is, by universality of $b$, at least one
stratum $\sigma' \in a^\star \Kstrat$ such that $\mu(\sigma') = \sigma$.
Consider the following sub-episheaf $\sheaf{E}$ of $b^\star \sheaf{F}$.
For each stratum $\sigma \in b^\star \Kstrat$, let
	$$ \sheaf{E}(\st\; \sigma) := \biguplus_{\{\sigma' \in a^\star \Kstrat \; | \; 
	\mu(\sigma') \subseteq \sigma \}} \Epi \big( a^\star \sheaf{F} \big) (\st\; \sigma') \subseteq 
	\sheaf{F} \big( \st\; b(\sigma) \big)$$
and for each each $\sigma \leq \tau$, let 
$ \sheaf{E}(\st\; \tau \subseteq \st\; \sigma) : \sheaf{E}(\st\; \sigma) \to \sheaf{E}(\st\; \tau)$
be the restriction of the map $\sheaf{F}\big( \st\; b(\tau) \subseteq \st\; b(\sigma) \big)$.
Note that $\sheaf{E}(\st\; \tau \subseteq \st\; \sigma)$ is surjective.
The pull-back $\mu^\star \sheaf{E}$ is a sub-episheaf of $a^\star \sheaf{F}$ containing 
$\Epi \big( a^\star \sheaf{F} \big)$.
By maximality of $\Epi \big( a^\star \sheaf{F} \big)$, $\ubar \beta$ is surjective.

The dual argument shows that $\lbar \beta$ is injective.
Consider the following quotient-monocosheaf $\cosheaf{M}$ of $b^\star \cosheaf{F}$.
For each stratum $\sigma \in b^\star \Kstrat$, let
	$$ \cosheaf{M}(\st\; \sigma) := \dfrac{\cosheaf{F}\big(\st\; b(\sigma) \big)}
	{\bigcap_{\{\sigma' \in a^\star \Kstrat \; | \; 
	\mu(\sigma') \subseteq \sigma \}} \ker \lbar \eta (a^\star \cosheaf{F})(\st\; \sigma')}$$
and for each each $\sigma \leq \tau$, let 
$ \cosheaf{M}(\st\; \tau \subseteq \st\; \sigma) : \cosheaf{M}(\st\; \tau) \to \cosheaf{M}(\st\; \sigma)$
be the map $\cosheaf{F}\big( \st\; b(\tau) \subseteq \st\; b(\sigma) \big)$ quotient the intersection of the kernels.
Note that $\cosheaf{M}(\st\; \tau \subseteq \st\; \sigma)$ is injective.
The pull-back $\mu^\star \cosheaf{M}$ is a quotient-monocosheaf of $a^\star \cosheaf{F}$ smaller than 
$\Mono \big( a^\star \cosheaf{F} \big)$.
By minimality of $\Mono \big( a^\star \cosheaf{F} \big)$, $\lbar \beta$ is surjective.
\end{proof}

\section{Dilation}
\label{sec:dilation}
In this section, we begin the task of proving stability of the isobisheaf stack of a map.
Dilation is an operation that coarsens or smooths the data of a constructible bisheaf.

Let $K$ be a simplicial complex.
The \emph{first subdivision} of $K$ is the simplicial complex
$K^1$
whose (open) simplices are chains
$ [\sigma_{i_0} \leq \cdots \leq \sigma_{i_n} ]$
of simplices in $K$.
The face relation 
$[\sigma_{i_0} \leq \cdots \leq \sigma_{i_n}] \leq [\sigma_{j_0} \leq \cdots \leq \sigma_{j_m}]$
in $K^1$ is the subchain relation.
Similarly, the \emph{second subdivision} of $K$ is the simplicial complex
$K^2$ whose (open) simplices are chains
	$$\Big[ [\sigma_{i_0} \leq \cdots \leq \sigma_{i_n}] \leq \cdots \leq [\sigma_{j_0} \leq \cdots 
	\leq \sigma_{j_m}] \Big]$$
of simplices in $K^1$.
The face relation in $K^2$ is the subchain relation.

\begin{defn}
The \define{dilation} of a simplicial complex $K$ is the simplicial map
$\Sigma : K^2 \to K^1$ defined by sending each vertex
$\Big[ [\sigma_{i_0} \leq \cdots \leq \sigma_{i_n}] \Big ] \in K^2$
to the vertex $[\sigma_{i_0}] \in K^1$.
Thus each simplex
$$\Big[ [\sigma_{i_0} \leq \cdots \leq \sigma_{i_l}] \leq 
\cdots \leq [\sigma_{j_0} \leq \cdots \leq \sigma_{j_m}] \leq \cdots \leq
[\sigma_{k_0} \leq \cdots \leq \sigma_{k_n}] \Big] \in K^2$$
maps to the simplex
$ [ \sigma_{k_0} \leq \cdots \leq \sigma_{j_0} \leq \cdots \leq \sigma_{i_0} ] \in K^1.$
Note that for a simplex $\tau \in K$, 
	$$\Sigma^{-1}\big(  [ \tau ] \big) = \cl\; \st\; \big[ [ \tau ] \big] - 
	\bigcup_{\sigma < \tau} \big \{ \cl \; \st \; \big[ [ \sigma ] \big] \big \}.$$
Here $\cl\; \st\; \big[ [ \tau ] \big]$ means the closure of the open star
of $\big[ [ \tau ] \big]$ in $K^2$.
See Figure \ref{fig:dilation}.
\end{defn}

\begin{figure}
\centering
\includegraphics[scale=1.5]{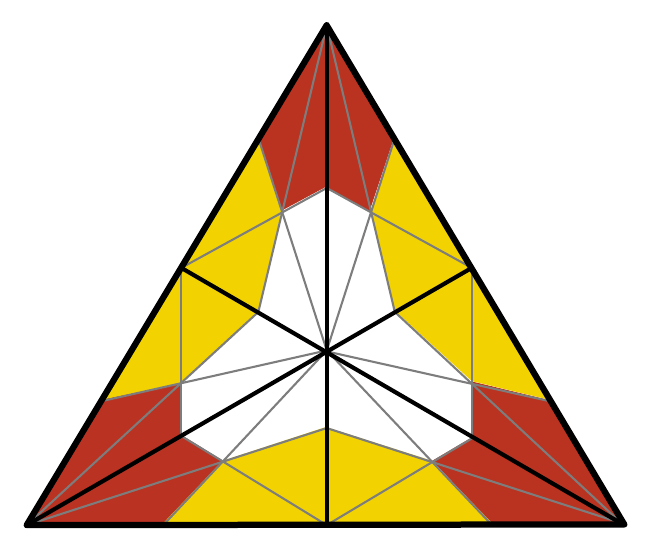}
\caption{The dilation of the $2$-simplex $K$ is the simplicial map $\Sigma : K^2 \to K^1$
where $K^1$ is its first subdivision and $K^2$ its second subdivision.
For each vertex (resp.\ edge) $\tau \in K$, $\Sigma^{-1}\big(  [ \tau ] \big)$ is colored in red (resp.\ yellow).}
\label{fig:dilation}
\end{figure}

Let $(X, \Kstrat)$ be a triangulation and $\phi : | K - K_0 | \to X$ the associated
homeomorphism from a simplicial pair $(K, K_0)$.
We subdivide $(X, \Kstrat)$ by subdividing $(K, K_0)$ and pushing-forward
along $\phi$.
Denote by $(X, \Kstrat^i)$ the $i$-th subdivision of $(X, \Kstrat)$.
The simplicial dilation map $\Sigma : K^2 \to K^1$
gives rise to a continuous dilation map $\Sigma : (X, \Kstrat^2) \to (X, \Kstrat^1)$.

A $\Kstrat^1$-constructible bisheaf $\bisheaf{F}$ around a manifold $M$ pulls back
along the dilation map to produce a $\Kstrat^2$-constructible
bisheaf $\Sigma^\star \bisheaf{F}$ around $M$ generated by the following assignments.
Consider any pair $V \subseteq U$ of $\Kstrat^2$-basic opens.
Suppose
$V$ is associated to a stratum $\tau$ and $U$ to a stratum $\sigma$.
Then $\sigma \leq \tau$ and therefore $\Sigma(\sigma) \leq \Sigma(\tau)$.
The $\Kstrat^2$-constructible sheaf $\Sigma^\star \sheaf{F}$ is generated by 
$\Sigma^\star \sheaf{F}(U) := \sheaf{F}\big( \st\; \Sigma(\sigma) \big)$
and 
$\Sigma^\star \sheaf{F}(V \subseteq U)$ is the map 
$\sheaf{F} \big( \st \; \Sigma(\tau) \subseteq \st\; \Sigma(\sigma) \big).$
The $\Kstrat^2$-constructible cosheaf $\Sigma^\star \cosheaf{F}$ is generated by
$\Sigma^\star \cosheaf{F}(U) := \cosheaf{F}\big( \st\; \Sigma(\sigma) \big)$
and 
$\Sigma^\star \cosheaf{F}(V \subseteq U)$ is the map 
$\cosheaf{F} \big( \st \; \Sigma(\tau) \subseteq \st\; \Sigma(\sigma) \big).$
Let $\Sigma^\star \Ffunc(U) := \Ffunc \big( \st\; \Sigma(\sigma) \big).$

\begin{prop}
\label{prop:dilation_bisheaf_map}
Let $(M, \Kstrat)$ be a triangulation and $\bisheaf{F}$ a $\Kstrat^1$-constructible
bisheaf.
Then there is a canonical bisheaf map $\ulbar \alpha : \Sigma^\star \bisheaf{F} \to \bisheaf{F}$.
\end{prop}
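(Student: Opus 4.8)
The plan is to construct the bisheaf map $\ulbar\alpha = (\ubar\alpha, \lbar\alpha)$ on the generating subposet of open stars $\{\st\,\tau \mid \tau \in \Kstrat^2\}$ and then let it propagate to all open sets by the universal property of limits (for the sheaf component) and colimits (for the cosheaf component), as in \S\ref{sec:sheaves}--\ref{sec:cosheaves}. By definition of the pull-back, $\Sigma^\star\sheaf{F}(\st\,\tau) = \sheaf{F}(\st\,\Sigma\tau)$ and $\Sigma^\star\cosheaf{F}(\st\,\tau) = \cosheaf{F}(\st\,\Sigma\tau)$, so I need, for each $\tau \in \Kstrat^2$, a map $\ubar\alpha(\st\,\tau) : \sheaf{F}(\st\,\Sigma\tau) \to \sheaf{F}(\st\,\tau)$ and a map $\lbar\alpha(\st\,\tau) : \cosheaf{F}(\st\,\tau) \to \cosheaf{F}(\st\,\Sigma\tau)$, and I must check naturality in $\tau$ and compatibility with $\Ffunc$. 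The key geometric observation to exploit is the formula $\Sigma^{-1}([\tau]) = \cl\,\st\,[[\tau]] - \bigcup_{\sigma \lneq \tau} \cl\,\st\,[[\sigma]]$: this says that the dilation map, restricted to each open star $\st\,\tau$ in $K^2$, has image landing inside the (smaller, finer) star $\st\,\Sigma\tau$ in $K^1$ in a controlled way, so that $\st\,\Sigma\tau$ is contained in $\Sigma(\st\,\tau)$ up to the combinatorics — more precisely, I expect $\st\,\Sigma\tau \subseteq \Sigma(\st\,\tau)$ fails set-theoretically but there is a canonical inclusion relation between the relevant open stars at the level of the poset $\Kstrat^1$ versus $\Kstrat^2$ that gives the restriction maps.

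Concretely, I would proceed as follows. First, unwind the combinatorics of $\Sigma : K^2 \to K^1$: for a simplex $\tau = [[\sigma_{i_0} \lneq \cdots] \lneq \cdots \lneq [\sigma_{k_0} \lneq \cdots]] \in K^2$, its image is $[\sigma_{k_0} \lneq \cdots \lneq \sigma_{i_0}] \in K^1$. The vertex $[\sigma_{k_0}] = \Sigma(\text{the smallest vertex of }\tau)$ determines which star of $K^1$ we map into; I would show that the open star $\st\,\Sigma\tau$ in $\Kstrat^1$, pulled back, sits as an open subset inside $\st\,\tau$, or rather that there is a natural poset map relating $\st_{\Kstrat^2}\tau$ to $\st_{\Kstrat^1}\Sigma\tau$. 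This gives, by $\Kstrat^1$-constructibility of $\sheaf{F}$ and $\cosheaf{F}$ (note these are constructible for the coarser stratification $\Kstrat^1$, while $\Sigma^\star\bisheaf{F}$ is $\Kstrat^2$-constructible), the desired structure maps — the point being that restriction of a $\Kstrat^1$-constructible sheaf along a refinement into $\st\,\Sigma\tau$ is well-defined. Second, I would check that for $\tau' \leq \tau$ in $\Kstrat^2$ the square relating $\ubar\alpha(\st\,\tau), \ubar\alpha(\st\,\tau')$ and the two restriction maps commutes; this reduces to the functoriality of $\sheaf{F}$ and $\cosheaf{F}$ together with the fact that $\Sigma$ is a simplicial (hence face-relation-preserving) map, so $\Sigma\tau' \leq \Sigma\tau$. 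Third, compatibility of $\ubar\alpha$ and $\lbar\alpha$ with the pairing maps $\Ffunc$ and $\Sigma^\star\Ffunc$: since $\Sigma^\star\Ffunc(\st\,\tau) = \Ffunc(\st\,\Sigma\tau)$ by definition, the required square is literally an instance of the commuting square in the definition of the bisheaf $\bisheaf{F}$ applied to the inclusion $\st\,\Sigma\tau \hookleftarrow (\text{relevant finer open})$, so this step should be essentially formal.

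The main obstacle I expect is the first step: pinning down precisely the inclusion (or zig-zag of inclusions) of open stars that produces the structure maps, i.e. verifying that the dilation map $\Sigma$ genuinely carries $\st_{\Kstrat^2}\tau$ into $\st_{\Kstrat^1}\Sigma\tau$ at the level of underlying spaces (this is where the displayed formula for $\Sigma^{-1}([\tau])$ and the "single $m$-simplex" / closure-of-star bookkeeping of the second subdivision must be used carefully), and that the resulting maps do not depend on choices. Once that geometric lemma is in hand, the rest is bookkeeping with the universal properties of $\lim$ and $\colim$ to extend from the basis of open stars to $\Open(M)$, exactly as in the constructions following the definitions of $\Sheaf(X,\Sstrat)$ and $\Cosheaf(X,\Sstrat)$. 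I would close by remarking that the map is canonical because every choice made (the combinatorial inclusion of stars) is forced by the simplicial structure of $\Sigma$.
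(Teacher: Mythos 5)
Your overall strategy---defining $\ubar{\alpha}$ and $\lbar{\alpha}$ on the generating subposet of open stars of $\Kstrat^2$ and extending to all of $\Open(M)$ by the universal properties of $\mylim$ and $\colim$---is the paper's, and you have the directions of the two components right. The gap sits exactly where you flag ``the main obstacle,'' and your candidate formulations of the missing geometric fact point the wrong way. Nothing about $\Sigma(\st\,\tau)$, nor about ``pulling $\st\,\Sigma(\tau)$ back inside $\st\,\tau$,'' is needed; the relevant statement is the plain containment of open subsets of $M$ (both $\Kstrat^1$ and $\Kstrat^2$ triangulate the same space):
\begin{equation*}
\st_{\Kstrat^2}\tau \;\subseteq\; \st_{\Kstrat^1}\Sigma(\tau) \qquad \text{for every } \tau \in \Kstrat^2,
\end{equation*}
i.e.\ the star in the finer triangulation lies inside the star of the image simplex in the coarser one---the opposite inclusion from the one you propose to prove. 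This is the standard simplicial-approximation property of the first-vertex map: since $\st\,\tau=\bigcap_v \st\,v$ over the vertices $v$ of $\tau$ and $\Sigma$ is simplicial, it suffices to treat a vertex $v=\big[[c]\big]$ with $c=[\sigma_{i_0}\lneq\cdots\lneq\sigma_{i_n}]$; any open simplex of $K^2$ having $v$ as a vertex is indexed by a chain $[c_0\lneq\cdots\lneq c_k]$ with $c$ among the $c_j$, it lies in the open simplex $|c_k|$ of $K^1$, and $c_k\supseteq c\ni\sigma_{i_0}$ forces $|c_k|\subseteq\st_{\Kstrat^1}[\sigma_{i_0}]=\st_{\Kstrat^1}\Sigma(v)$.

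Once that containment is in hand, the proof collapses and your steps 2 and 3 require no separate work. Define $\ubar{\alpha}(\st\,\tau):=\sheaf{F}\big(\st\,\tau\subseteq\st\,\Sigma(\tau)\big)$ and $\lbar{\alpha}(\st\,\tau):=\cosheaf{F}\big(\st\,\tau\subseteq\st\,\Sigma(\tau)\big)$, i.e.\ the structure maps of the original sheaf and cosheaf along this single inclusion; there are no choices, so canonicity is automatic. Naturality in $\tau$ is functoriality of $\sheaf{F}$ and $\cosheaf{F}$ applied to the two factorizations $\st\,\tau\subseteq\st\,\Sigma(\tau)\subseteq\st\,\Sigma(\tau')$ and $\st\,\tau\subseteq\st\,\tau'\subseteq\st\,\Sigma(\tau')$ of the same inclusion, for $\tau'\leq\tau$. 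Compatibility with the vertical maps is literally the bisheaf axiom of $\bisheaf{F}$ for the pair $\st\,\tau\subseteq\st\,\Sigma(\tau)$, since $\Sigma^\star\Ffunc(\st\,\tau)=\Ffunc\big(\st\,\Sigma(\tau)\big)$ by definition of the pull-back. So your plan is salvageable, but the one substantive lemma must be stated with the inclusion running the other way, and it is a genuine set-theoretic containment rather than a poset-level surrogate.
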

\begin{proof}
By the above construction of $\Sigma^\star \bisheaf{F}$, it is enough to specify
$\ulbar \alpha$ on the open stars of each stratum of $\Kstrat^2$.
For each $\tau \in \Kstrat^2$, we have $\st\; \tau \subseteq \st\; \Sigma(\tau)$.
Since $\Sigma^\star \sheaf{F} ( \st\; \tau)$ is canonically isomorphic to 
$\sheaf{F}\big( \st\; \Sigma(\tau) \big)$, let $\ubar{\alpha}( \st\; \tau )$ be the map
generated by the map $\sheaf{F}\big( \st\; \tau \subseteq  \st\; \Sigma(\tau) \big)$.
Since $\Sigma^\star \cosheaf{F} ( \st\; \tau)$ is canonically isomorphic to $\cosheaf{F}\big( \st\; \Sigma(\tau) \big)$,
let $\lbar{\alpha}( \st\; \tau )$ be generated by the map 
$\cosheaf{F}\big( \st\; \tau \subseteq \st\; \Sigma(\tau) \big)$.
\end{proof}

\begin{defn}
Let $(M, \Kstrat)$ be a triangulation of a manifold 
and $a : A \to M$ a $\Kstrat$-constructible \'etale open.
The \define{shrinking} of $a$ is the $\Kstrat^2$-constructible \'etale open
$\dot a : \dot A \to M$ that is obtained by pulling back $a$ along the continuous dilation map 
$\Sigma : (M, \Kstrat^2) \to (M, \Kstrat^1)$:
	\begin{equation*}
	\xymatrix{
	\dot A \ar@{-->}[d]^{\dot a} \ar@{-->}[rr]^{\mu} && A \ar[d]^{a} \\
	M \ar[rr]^{\Sigma} && M.
	}
	\end{equation*}
For example, suppose $M$ is $2$-dimensional. 
If $a$ is a $2$-stratum of $\Kstrat$ as in Figure \ref{fig:dilation}, 
then $\dot a$ is the white region in the interior.
%
%
\end{defn}

\begin{prop}
\label{prop:shrinking}
Let $(M, \Kstrat)$ be a triangulation of a manifold, $\bisheaf{F}$ a $\Kstrat$-constructible bisheaf,
$a : A \to M$ a $\Kstrat$-constructible \'etale open, and $\mu : \dot a \to a$ the
canonical \'etale map from the shrinking of $a$.
Then the two persistent colocal systems $\image \mu^\star \Iso \big( a^\star \bisheaf{F} \big)$ 
and $\image \Iso \big( \dot a^\star \Sigma^\star \bisheaf{F} \big)$ under $\dot A$ are isomorphic.
\end{prop}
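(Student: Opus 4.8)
The plan is to peel the manifold $M$ off the problem, reduce it to a combinatorial comparison on open stars over $A$, and then exhibit the desired isomorphism as the map on persistent local systems induced by the canonical dilation map of Proposition \ref{prop:dilation_bisheaf_map}.

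\emph{Step 1: reduce to a statement over $A$.} Since $a$ is a $\Kstrat$-constructible \'etale open, it is carried by a simplicial map of the triangulating pairs, and the canonical map $\mu : \dot a \to a$ is, under the identifications in the definition of shrinking, the inclusion of the open set $\dot A = |L^2 - \cl\,\st\, L_0^2|$ into $A = |L - L_0|$ followed by $a$. Dilation is a local, combinatorial construction, so it commutes with pull-back along a locally simplicial \'etale map: there is a dilation $\Sigma_A : (A, a^\star\Kstrat^2) \to (A, a^\star\Kstrat^1)$ with $a^\star\Sigma^\star\bisheaf{F} \cong \Sigma_A^\star(a^\star\bisheaf{F})$, hence $\dot a^\star\Sigma^\star\bisheaf{F} \cong \Sigma_A^\star(a^\star\bisheaf{F})\big|_{\dot A}$. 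Writing $\bisheaf{G} := a^\star\bisheaf{F}$, which is $a^\star\Kstrat$-constructible and therefore also $a^\star\Kstrat^1$-constructible, and using that $\mu^\star$ is restriction to the open set $\dot A$ and that the image functor commutes with restriction to an open set, the proposition becomes the assertion that the local systems $\image\Iso\big(\Sigma_A^\star\bisheaf{G}\big|_{\dot A}\big)$ and $\big(\image\Iso(\bisheaf{G})\big)\big|_{\dot A}$ are isomorphic over $\dot A$.

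\emph{Step 2: produce the comparison and reduce to stalks.} Apply Proposition \ref{prop:dilation_bisheaf_map} to $\bisheaf{G}$, getting the canonical bisheaf map $\ulbar{\alpha} : \Sigma_A^\star\bisheaf{G} \to \bisheaf{G}$; restrict it to $\dot A$, apply isofication, and pass to images. As in Example \ref{ex:stack_example} and Proposition \ref{prop:image_local_system} this yields a morphism of local systems over $\dot A$ between $\image\Iso\big(\Sigma_A^\star\bisheaf{G}\big|_{\dot A}\big)$ and $\big(\image\Iso(\bisheaf{G})\big)\big|_{\dot A}$. It remains to prove this morphism is an isomorphism, and since everything in sight is $a^\star\Kstrat^2$-constructible, hence determined by its values on open stars, it suffices to check this on the poset of simplices of $\dot A$ together with the monodromy identifications, which are all transported along the same simplicial data.

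\emph{Step 3: the local comparison, and the main obstacle.} For a simplex $\tau$ of $a^\star\Kstrat^2$ lying in $\dot A$ (equivalently $\tau \notin \cl\,\st\, L_0^2$), the sheaf component of $\ulbar{\alpha}$ at $\st\,\tau$ is the restriction $\sheaf{G}(\st\,\Sigma_A\tau) \to \sheaf{G}(\st\,\tau)$ along $\st\,\tau \subseteq \st\,\Sigma_A\tau$, and the cosheaf component is the matching corestriction. The geometric point is that $\Sigma_A$ moves open stars outward by one barycentric step --- $\Sigma_A(\tau)$ is the simplex of $a^\star\Kstrat^1$ read off from the minima of the chains composing $\tau$, and the formula expressing $\Sigma_A^{-1}$ of a simplex as a half-open neighborhood of its subdivided barycenter shows the step is exactly one --- while shrinking $A$ to $\dot A$ deletes precisely the one-step collar $\cl\,\st\, L_0^2$ of the frontier $L_0$. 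Thus for $\tau$ in $\dot A$ the enlarged star $\st\,\Sigma_A\tau$ still sits inside $A$ and, more to the point, the pair $\st\,\tau\subseteq\st\,\Sigma_A\tau$ lies over a configuration of strata of $\bisheaf{G}$ on which passage to $\Epi$ and $\Mono$ does not see the enlargement: I would compute the maximal sub-episheaf and the minimal quotient-monocosheaf stratum-by-stratum along chains of specializations of simplices (all stars being contractible, no sheaf-cover conditions intervene), and check that the one-step collar removal makes these chains ``complete'' inside $\dot A$, so that $\Epi(\Sigma_A^\star\sheaf{G})\big|_{\dot A}$ and $\Epi(\sheaf{G})\big|_{\dot A}$, and likewise the two monofied cosheaves, are carried into one another by $\ulbar{\alpha}$; the images of the isobisheaves then coincide. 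The delicate point --- the main obstacle --- is exactly this last claim: $\Epi$ and $\Mono$ are global operations that need not commute with pull-back along $\Sigma_A$ nor with restriction to $\dot A$, so the argument must exploit the precise combinatorics of the dilation map and the exact match between the collar deleted by shrinking and the support of the vertices of $\Kstrat^2$ that $\Sigma_A$ collapses, rather than any formal naturality.
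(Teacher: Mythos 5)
Your architecture is right and you have correctly located the crux, but the crux is precisely what is left unproven: Steps 1--2 are formal, and Step 3 ends with ``I would compute \dots and check that \dots'' at exactly the point where the proposition's content lives. The operations $\Epi$ and $\Mono$ are global maximality/minimality constructions, and a priori $\Iso\big(\Sigma_A^\star\bisheaf{G}|_{\dot A}\big)$ has no reason to agree with $\big(\Sigma_A^\star\Iso(\bisheaf{G})\big)|_{\dot A}$; asserting that a stratum-by-stratum computation along chains of specializations will confirm this is a restatement of the claim, not an argument. There is also a smaller mismatch in Step 2: restricting $\ulbar{\alpha}$ to $\dot A$ and then isofying gives a morphism into $\Iso\big(\bisheaf{G}|_{\dot A}\big)$, whereas the proposition's target is $\mu^\star\Iso\big(a^\star\bisheaf{F}\big)$, i.e.\ isofy first and then restrict; identifying the two images requires another instance of the same unproved commutation.

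The paper closes the gap with one observation you do not make: the dilation map pulled back to the shrinking is a \emph{surjective} $a^\star\Kstrat^1$-constructible map $\Lambda : \dot A \epi A$ --- this is exactly what the one-step collar removal buys, since $\Sigma$ enlarges stars by one barycentric step while shrinking deletes a one-step collar, so the stars of $\dot A$ still cover all of $A$ under $\Sigma$. Pull-back along such a surjection \emph{does} commute with $\Epi$ and $\Mono$: a sub-episheaf of $\Lambda^\star\sheaf{G}$ is the same data as a sub-episheaf of $\sheaf{G}$ because every restriction map of $\sheaf{G}$ between stars is realized as a restriction map of $\Lambda^\star\sheaf{G}$, and dually for quotient-monocosheaves; hence $\Iso\big(\dot a^\star\Sigma^\star\bisheaf{F}\big) = \Lambda^\star\Iso\big(a^\star\bisheaf{F}\big)$. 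This is ``formal naturality'' of exactly the kind you rule out, only along the right map. The isomorphism of images then drops out stalkwise: for each simplex $\sigma$ of $\dot a^\star\Kstrat^2$ one has $\Lambda(\st\;\sigma) \supseteq \mu(\st\;\sigma)$, giving a commutative square between $\Iso\big(a^\star\bisheaf{F}\big)$ evaluated on $\Lambda(\st\;\sigma)$ and on $\mu(\st\;\sigma)$ whose horizontal arrows are structure maps of an episheaf and of a monocosheaf, and such a square forces its two vertical images to be isomorphic. If you supply the surjectivity of $\Lambda$ and the resulting compatibility of $\Epi$ and $\Mono$ with $\Lambda^\star$, your outline becomes the paper's proof.
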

\begin{proof}
The dilation map $\Sigma : (M, \Kstrat^2) \to (M, \Kstrat^1)$ pulls-back to a surjective
$a^\star \Kstrat^1$-constructible map $\Lambda : (\dot A, \dot a^\star \Kstrat^2) \to (A, a^\star \Kstrat^1)$.
The isobisheaf
	$$\Epi \big( \dot a^\star \Sigma^\star \sheaf{F} \big) \mono
	\dot a^\star \Sigma^\star \sheaf{F} \to \dot a^\star \Sigma^\star \cosheaf{F}
	\epi \Mono \big( \dot a^\star \Sigma^\star \cosheaf{F} \big)$$
is the pull-back along $\Lambda$ of the isobisheaf
	$$\Epi \big(a^\star \sheaf{F} \big) \mono
	a^\star \sheaf{F} \to a^\star \cosheaf{F}
	\epi \Mono \big( a^\star \cosheaf{F} \big).$$
For each simplex $\sigma \in \dot a^\star \Kstrat^2$,
$\Lambda(\st\; \sigma) \supseteq \mu(\st \; \sigma)$.
Thus we have the following diagram
	\begin{equation*}
	\xymatrix{
	\Epi \big( \dot a^\star \Sigma^\star \sheaf{F} \big)(\st\; \sigma) 
	\ar[d] \ar@{^{(}->}[rr] && \Epi \big(a^\star \sheaf{F} \big)\big( \mu(\st \; \sigma) \big) \ar[d] \\
	\Mono \big( \dot a^\star \Sigma^\star \cosheaf{F} \big)(\st\; \sigma) && 
	\Mono \big( a^\star \cosheaf{F} \big)\big( \mu(\st \; \sigma) \big) \ar@{->>}[ll]
	}
	\end{equation*}
which induces an isomorphism between the two vertical images.
Therefore $\image \mu^\star \Iso \big( a^\star \bisheaf{F} \big)$ and 
$\image \Iso \big( \dot a^\star \Sigma^\star \bisheaf{F} \big)$ are isomorphic.
\end{proof}

\section{Stability}
\label{sec:stability}
Let $M$ be a compact oriented $m$-manifold
and $\mathfrak{W}(X, M)$ the set 
of all constructible maps $X \to M$ as in Definition \ref{defn:constructible_map}.
For each open set $U \subseteq X \times M$, let
$$T_U := \big\{ f \in \mathfrak{W}(X, M) \; 
\big |\; \mathsf{graph}(f) \subseteq U \big \} .$$
The collection $\big\{ T_U \big \}$ over all open sets $U$ forms the basis for 
the \emph{Whitney topology} on~$\mathfrak{W}(X, M)$.

\begin{thm}
\label{thm:main}
Every map $f \in \mathfrak{W}(X, M)$ has an open neighborhood $\mathcal{U} \subseteq \mathfrak{W}(X, M)$
such that for every map $g \in \mathcal{U}$, their bisheaves $\bisheaf{F}_\ast$ and $\bisheaf{G}_\ast$ are related by canonical bisheaf maps in~$\Bisheaf(M)$:
$$\bisheaf{F}_\ast \leftarrow \Sigma^\star \bisheaf{F}_\ast \to \bisheaf{G}_\ast.$$
\end{thm}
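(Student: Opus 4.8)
The plan is to produce the two bisheaf maps by finding, for each map $f\in\mathfrak W(X,M)$, a single triangulation $(M,\Kstrat)$ that simultaneously ``resolves'' both $f$ and every $g$ in a small enough Whitney neighborhood of $f$. First I would choose, using the constructibility of $f$ (Definition \ref{defn:constructible_map}) together with triangulability of Thom--Mather spaces (the proposition citing \cite{10.2307/2042563}), a triangulation $(M,\Kstrat)$ such that $f$ is $\Kstrat^1$-constructible — I take the first subdivision so that there is room to dilate. The bisheaf $\bisheaf F_\ast$ of $f$ is then $\Kstrat^1$-constructible, so by Proposition \ref{prop:dilation_bisheaf_map} we immediately get the left arrow $\Sigma^\star\bisheaf F_\ast\to\bisheaf F_\ast$ in $\Bisheaf(M)$, where $\Sigma:\Kstrat^2\to\Kstrat^1$ is the dilation map. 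This half is essentially formal once the triangulation is fixed.

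The content is in the right arrow. The strategy is: the dilated bisheaf $\Sigma^\star\bisheaf F_\ast$ is $\Kstrat^2$-constructible, and its sections over open stars $\st\tau$ of $\Kstrat^2$ are, by construction, sections of $\bisheaf F_\ast$ over the larger stars $\st\Sigma(\tau)$ of $\Kstrat^1$. The key geometric point is a ``moving the wall'' / compactness argument: because $M$ is compact and $f$ is constructible with respect to $\Kstrat$, there is $\epsilon>0$ such that whenever $\sup_p d(fp,gp)<\epsilon$ — equivalently, whenever $g$ lies in the basic Whitney neighborhood $T_U$ for a suitable tubular $U$ around $\mathsf{graph}(f)$ — the fibers $g^{-1}(\st\tau)$ sit inside $f^{-1}(\st\Sigma\tau)$ which sits inside $g^{-1}(\st\Sigma^2\tau)$ (or some such nesting adapted to the two subdivisions), and all the relevant inclusions of pairs $(X,X-f^{-1}(\,\cdot\,))$, $(X,X-g^{-1}(\,\cdot\,))$ are homotopy equivalences. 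Feeding these homotopy equivalences into the definitions of $\sheaf F_\ast$, $\cosheaf F_\ast$ (Examples \ref{ex:relative_sheaf}, \ref{ex:ordinary_cosheaf}) and naturality of the cap product (the bisheaf square \ref{eq:cap_square}) gives compatible maps $\Sigma^\star\sheaf F_\ast\to\sheaf G_\ast$ and $\cosheaf G_\ast\to\Sigma^\star\cosheaf F_\ast$ on open stars, hence — since a $\Kstrat^2$-constructible (co)sheaf is determined by its values on open stars — a bisheaf map $\Sigma^\star\bisheaf F_\ast\to\bisheaf G_\ast$. Here one must be slightly careful that the orientation classes match up: the cap product defining $\Ffunc_\ast$ and $\Gfunc_\ast$ both use the fixed orientation $o$ of $M$, pulled back along $f$ and $g$ respectively, and the nested-fiber inclusions are compatible with these pullbacks by naturality of $\cosheaf O$, so the square commutes.

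The main obstacle I expect is making the uniform choice of $\epsilon$ (the open neighborhood $U\subseteq\mathfrak W(X,M)$) work simultaneously for all simplices of $\Kstrat$: one needs that a single Whitney-open condition on $g$ forces the correct nesting of $g^{-1}(\st\tau)$ between two iterated stars of $f^{-1}$ for every $\tau$, and that this nesting is by maps inducing isomorphisms on the relevant homology. This is where the finiteness of $\Kstrat$ (compactness of $M$), the local conicality of the triangulation, and the ``moving the wall'' lemma from \cite[Ch.~4]{SMT} combine — the two subdivisions of $\Kstrat$ provide exactly the ``buffer'' needed so that a small perturbation cannot push a fiber out of the next larger star. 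A secondary technical point is checking that the constructed maps are genuinely independent of the auxiliary choices (the specific $\Kstrat$-constructible structure, the choice of basic opens $U'\subsetneq U$ used in the cap product), i.e.\ that they are \emph{canonical}; this follows from the universal properties and the uniqueness-up-to-isomorphism of generated constructible (co)sheaves established in \S\ref{sec:sheaves}--\ref{sec:cosheaves}, but it should be stated carefully.
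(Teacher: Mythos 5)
Your plan follows the paper's proof in all essentials: fix a triangulation $(M,\Kstrat)$ adapted to $f$ so that $\bisheaf{F}_\ast$ is $\Kstrat^1$-constructible and Proposition \ref{prop:dilation_bisheaf_map} gives the left arrow; use compactness of $M$ and finiteness of $\Kstrat$ to extract a uniform bound (the paper takes $\rho:=\min_{\sigma\in\Kstrat}\Haus\bigl(\st\,[[\sigma]],\,\st\,\sigma\bigr)$ and lets $U$ be the $\rho$-ball around $f$ in the sup metric); and build the right arrow from inclusions of fibers together with naturality of the cap product, Equation \ref{eq:naturality}.

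Two points of execution differ from what you sketch, and are worth noting. First, the paper needs only the \emph{one-sided} inclusion $g^{-1}(V)\subseteq f^{-1}(U)$ (with $V$ a small basic open for $g$'s stratification and $U$ a large basic open for $f$'s); it never asserts, and does not need, that any inclusion mixing $f$-fibers and $g$-fibers is a homotopy equivalence --- those cross-inclusions merely induce maps $i_{\ast+m}$ and $i_\ast$ on (relative) homology, and the resulting $\ubar{\alpha}$, $\lbar{\alpha}$ are in general not isomorphisms. The only homotopy equivalences used are the within-one-map ones guaranteed by constructibility, which identify sections of $\sheaf{F}_\ast$, $\cosheaf{G}_\ast$, etc., with singular homology groups. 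If your sandwich $g^{-1}(\st\tau)\subseteq f^{-1}(\st\Sigma\tau)\subseteq g^{-1}(\st\Sigma^2\tau)$ were meant to produce isomorphisms, it would fail, since $g$ is not constructible with respect to $\Kstrat$. Second, you cannot generate the right arrow on the open stars of $\Kstrat^2$ alone: the structure map $\Gfunc_\ast$ (the cap product for $g$) is only defined on $(\Sstrat',\Jcontrol')$-basic opens, and $\sheaf{G}_\ast(\st\,\tau)$ for $\tau\in\Kstrat^2$ is an uncontrolled limit. The paper therefore introduces a second triangulation $(M,\Lstrat)$ adapted to $g$, subdivided until every $\Lstrat$-star sits inside some $\Kstrat^2$-star, and generates the bisheaf map on $\Lstrat$-stars, which suffices because $\bisheaf{G}_\ast$ is $\Lstrat$-constructible. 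With these two adjustments your outline matches the paper's argument.
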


Note that $f$ and $g$ need not be constructible with
respect to the same stratification and therefore
$\bisheaf{F}_\ast$ and $\bisheaf{G}_\ast$
may not be constructible with respect to the same stratification.
Recall $\Sigma : M \to M$ is the dilation map with respect to some triangulation of $M$ and
$\Bisheaf(M)$ is the category of all constructible
bisheaves over $M$.

\begin{proof}
Suppose $f$ is $(\Sstrat, \Jcontrol)$-constructible making $\bisheaf{F}_\ast$
an $\Sstrat$-constructible bisheaf.
By Proposition \ref{prop:triangulation},
there is a triangulation $(M, \Kstrat)$ of $(M, \Sstrat, \Jcontrol)$ and this triangulation
can be chosen so that the open star of each stratum in $\Kstrat$
is contained in an $(\Sstrat, \Jcontrol)$-basic open.
This makes $\bisheaf{F}_\ast$ a $\Kstrat^1$-constructible bisheaf
and $\Sigma^\star \bisheaf{F}$ a $\Kstrat^2$-constructible bisheaf.
The bisheaf map $\Sigma^\star \bisheaf{F}_\ast \to \bisheaf{F}_\ast$ follows from
Proposition \ref{prop:dilation_bisheaf_map}.

Every second-countable, Hausdorff space is metrizable.
Choose a metric on $M$.
For each stratum $\sigma \in \Kstrat$, we have
$\st\; \big[ [ \sigma] \big]  \subseteq \st\; \sigma.$
By compactness of $M$, $\Kstrat$ is finite.
Let	
	\begin{equation}
	\label{eq:injectivity}
	\rho := \min_{\sigma \in \Kstrat} \Haus \big( \st\; \big[ [\sigma] \big] ,
	\st \; \sigma \big)
	\end{equation}
where $\Haus$ is the Hausdorff distance between the two sets.
The set 
	\begin{equation}
	\label{eq:open_set}
	U := \Big \{ f' \in \mathfrak{W}(X, M) \Big | \sup_{x \in X} \Dist \big( f(x), f'(x) \big) < \rho \Big \}
	\end{equation}
is an open neighborhood of $f$ in $\mathfrak{W}(X, M)$. 

Choose a map $g \in \mathcal{U}$ and suppose it is $(\Sstrat', \Jcontrol')$-constructible
making $\bisheaf{G}_\ast$ an $\Sstrat'$-constructible bisheaf.
Choose a triangulation $(M, \Lstrat)$ of $(M, \Sstrat', \Jcontrol')$.
For each $\tau \in \Lstrat$, we assume there is a 
$$\sigma = \Big[ [\sigma_{i_0} \leq \cdots \leq \sigma_{i_l}] \leq 
\cdots \leq [\sigma_{j_0} \leq \cdots \leq \sigma_{j_m}] \leq \cdots <
[\sigma_{k_0} < \cdots < \sigma_{k_n}] \Big] \in \Kstrat^2$$
such that $\st\; \tau \subseteq \st\; \sigma$.
If this is not the case, subdivide $\Lstrat$ until this is true.
Note that there may be many $\sigma$ satisfying this relation.
In this case, choose the unique top dimensional simplex~$\sigma$.
We have the following inclusions:
$$\st\; \tau \subseteq \st\; \sigma \subseteq
\st\; \big[ [\sigma_{i_0} \leq \cdots \leq \sigma_{i_l}] \big] \subseteq 
\st\; \big[ [\sigma_{i_0} ] \big]
	\subseteq \st\; \sigma_{i_0}.$$
Choose an $(\Sstrat, \Jcontrol)$-basic open $U \subseteq M$ containing 
$\st \; \sigma_{i_0}$ such that both open sets are associated to a common
stratum in $\Sstrat$.
Choose an $(\Sstrat', \Jcontrol')$-basic open $V \subseteq M$ contained in 
$\st \; \tau$ such that both open sets are associated to a common
stratum in $\Sstrat'$.
The above inclusions imply an inclusion $i : g^{-1}(V) \to f^{-1}(U)$.
Recall $\Sigma \big( \big[ [\sigma_{i_0} ] \big] \big) = [\sigma_{i_0}]$.
Thus we have the following commutative diagram of solid arrows:
\begin{equation*}
\xymatrix{
\Hgroup_{\ast+m} \big( X, X - f^{-1}  ( U ) \big) 
\ar[r]^-\cong \ar[d]^{i_{\ast+m}} &
\sheaf{F}_{\ast + m} ( \st\; \sigma_{i_0} ) \ar[r]^-{\cong} &
\Sigma^\star \sheaf{F}  \big( \st\; \big[ [ \sigma_{i_0} ] \big] \big ) 
\ar@{-->}[d]^{\ubar{\alpha}( \st\; \tau )} \\
\Hgroup_{\ast+m} \Big( X, X - g^{-1} ( V ) \Big) 
\ar[rr]^-\cong \ar[d]^{\frown}  &&
\sheaf{G}_{\ast + m} (\st\; \tau ) \ar[d]^{\Gfunc_\ast ( \st\; \tau ) } \\
\Hgroup_{\ast} \Big( g^{-1} ( V ) \Big) \ar[d]^{i_\ast}
&& \ar[ll]^-\cong  \cosheaf{G}_{\ast} ( \st\; \tau )  
\ar@{-->}[d]^{\lbar{\alpha} ( \st\; \tau ) }\\
\Hgroup_{\ast} \big( f^{-1} ( U ) \big)
&
\ar[l]^-\cong  \cosheaf{F}_{\ast} \big( \phi ( \st\; \sigma_{i_0} ) \big) & \ar[l]^-{\cong} 
\Sigma^\star \cosheaf{F} \Big( \st\; \big[ [ \sigma_{i_0} ] \big] \big ) \Big) 
}
\end{equation*}
The bisheaf map $\Sigma^\star \bisheaf{F}_\ast \to \bisheaf{G}_\ast$
is generated by defining, for each $\tau \in \Lstrat$, the unique maps
$\ubar{\alpha} ( \st\; \tau )$ and $\lbar{\alpha}( \st\; \tau )$
that make the above diagram commute.
\end{proof}

\begin{corr}
\label{corr:main}
Every map $f \in \mathfrak{W}(X, M)$ has an open neighborhood $\mathcal{U} \subseteq \mathfrak{W}(X, M)$
such that for each map $g \in \mathcal{U}$ their isobisheaf stacks
$\Fstack_\ast$ and $\Gstack_\ast$ are related by canonical stack maps 
in~$\Stack(M)$:
$$\Fstack_\ast \leftarrow \Sigma^\star \Fstack_\ast \to \Gstack_\ast .$$
\end{corr}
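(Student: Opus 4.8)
The plan is to deduce the corollary from Theorem \ref{thm:main} by pushing the span of bisheaves it produces through the functorial passage from bisheaves to persistence stacks of Example \ref{ex:stack_example}. I would take the same open neighborhood $U \subseteq \mathfrak{W}(X, M)$ supplied by Theorem \ref{thm:main}, so that for each $g \in U$ we already have canonical bisheaf maps $\bisheaf{F}_\ast \leftarrow \Sigma^\star \bisheaf{F}_\ast \to \bisheaf{G}_\ast$ in $\Bisheaf(M)$. The first thing to pin down is that these three bisheaves, though not constructible for a single stratification coming from a single map, are all constructible with respect to one common stratification: in the notation of the proof of Theorem \ref{thm:main}, $\bisheaf{F}_\ast$ is $\Kstrat^1$-constructible, $\Sigma^\star \bisheaf{F}_\ast$ is $\Kstrat^2$-constructible, and $\bisheaf{G}_\ast$ is $\Lstrat$-constructible where $\Lstrat$ is chosen to refine $\Kstrat^2$ (each $\Lstrat$-star lies inside a $\Kstrat^2$-star). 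Hence all three are $\Lstrat$-constructible and the two bisheaf maps lie in $\Bisheaf(M, \Lstrat)$.

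Next I would apply Example \ref{ex:stack_example}: each $\Lstrat$-constructible bisheaf over $M$ has a persistence stack, with value $\Iso\big(a^\star\bisheaf{F}\big)$ over an \'etale open $a$ and with transition maps along \'etale maps furnished by the universal properties of $\Epi$ and $\Mono$; and a bisheaf map induces a map of persistence stacks compatible with \'etale maps in the sense of Diagram \ref{dgm:stack_map_map}. Writing $\Fstack_\ast$, $\Gstack_\ast$ for the persistence stacks of $\bisheaf{F}_\ast$, $\bisheaf{G}_\ast$ and declaring $\Sigma^\star \Fstack_\ast$ to be the persistence stack of the bisheaf $\Sigma^\star \bisheaf{F}_\ast$, the bisheaf span of Theorem \ref{thm:main} is carried to canonical stack maps $\Fstack_\ast \leftarrow \Sigma^\star \Fstack_\ast \to \Gstack_\ast$ in $\Stack(M)$, as required. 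The one point needing a word is that these stack maps, initially defined on $\Kstrat$- resp.\ $\Lstrat$-constructible \'etale opens, extend to all of $\Etal(M)$; this is immediate from Proposition \ref{prop:con-etale} together with Proposition \ref{prop:constructible_stack}, exactly as in the construction of the stacks themselves.

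I do not expect a serious obstacle here: everything is a formal consequence of Theorem \ref{thm:main} and the functoriality already established in Sections \ref{sec:stacks} and \ref{sec:dilation}. The only place that demands care is the stratification bookkeeping of the first paragraph — verifying that the whole span of bisheaves really is defined over a single common refinement, so that the bisheaf-to-stack functor of Example \ref{ex:stack_example} applies to it verbatim. (Proposition \ref{prop:shrinking} is not needed for the statement, but it is worth recording alongside it, since it identifies the persistent local system of the middle term $\Sigma^\star \Fstack_\ast$, on each shrunk \'etale open, with that of $\Fstack_\ast$, which is what makes the span a meaningful comparison of $f$ with its perturbation $g$.)
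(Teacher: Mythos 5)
Your proposal is correct and follows essentially the same route as the paper: the paper's proof of Corollary \ref{corr:main} simply applies the bisheaf-to-stack functoriality of Example \ref{ex:stack_example} to the span of bisheaf maps produced by Theorem \ref{thm:main}. The extra stratification bookkeeping you supply is reasonable care but not strictly needed, since the category $\Stack(M)$ (and the definition of a stack map) already permits the two stacks to be constructible with respect to different stratifications.
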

\begin{proof}
A bisheaf map gives rise to a canonical map of isobisheaf stacks as constructed
in Example \ref{ex:stack_example}.
The two stack maps follow from the two bisheaf maps of Theorem~\ref{thm:main}.
\end{proof}

We now discuss how Theorem \ref{thm:main} and Corollary \ref{corr:main} imply the property of \emph{Stability}
for persistent local systems mentioned in Section \ref{sec:introduction}.
For every map $f \in \mathfrak{W}(X,M)$, there is a triangulation $(M, \Kstrat)$
such that $f$ is $\Kstrat$-constructible.
Choose a metric on $M$ and recall $\rho > 0$ of Equation \ref{eq:injectivity}.
Note that $\rho$ is a measure of the coarseness of~$\Kstrat$.
The finer the triangulation, the smaller $\rho$ gets.
Given any any \'etale
open $a : A \to M$, the metric on $M$ lifts to a metric on $A$.
Define the distance between $a$ and its shrinking $\dot a$ as the Hausdorff
distance between $A$ and $\dot A$ along the inclusion $\dot A \mono A$.
The Hausdorff distance between $a$ and $\dot a$ is at most $\rho$.
Let $\Fstack_\ast$ be the isobisheaf stack associated to $f$ and let $\Gstack_\ast$
be the isobisheaf stack associated to any $g \in \mathfrak{W}(X,M)$ such that $\sup_{x \in X} d ( fx, gx ) < \rho$.
By Corollary \ref{corr:main} and Equation \ref{eq:subquotient}, the persistent colocal
system $\image \Ffunc_a$ restricted to $\dot A$ is a subquotient of $\image \Gfunc_{\dot a}$.


\section{Examples}
\label{sec:examples}
We have carefully chosen three examples to illustrate key behaviors of persistent (co)local systems.

\begin{ex}
\label{ex:one}
Let $\Rspace^2$ be the plane parameterized by polar coordinates $(r, \theta)$ and
$\Sstrat$ the stratification of $\Rspace^2$ consisting of the following two strata:
the origin $(0,0)$ is the $0$-stratum and $\Rspace^2 - \{ (0,0) \}$ is the $2$-stratum.
The stratification $\Sstrat$ is a Whitney stratification \cite[\S 5]{Mather} of the plane thus
admitting control data $(\Rspace^2, \Sstrat, \Jcontrol)$ \cite[\S 7]{Mather}.
Let $\Sspace^1$ be the circle parameterized by $[0, 2\pi)$
and let $X := [0, \infty) \times \Sspace^1 \times \Sspace^1$.
Define the map
$f : X \to \Rspace^2$ as $f(r, \phi, \theta) = (r, \theta)$.
The map $f$ is $(\Sstrat, \Jcontrol)$-constructible.

We now examine the bisheaf $\bisheaf{F}_1$ of $f$ in dimension one
as constructed in Example~\ref{ex:bisheaf}.
Let $V \subseteq U \subseteq \Rspace^2$ be two 
$(\Sstrat, \Jcontrol)$-basic opens
where $U$ is associated to the $0$-stratum and $V$ to the $2$-stratum.
Since all of $\Rspace^2$ is an $\Sstrat$-basic open, 
$\bisheaf{F}_1$ is, by \cite[Proposition 4.11]{curry_patel} and \cite[Theorem 6.1]{curry_patel},
 uniquely determined (up to an isomorphism) by the following
commutative diagram:
	\begin{equation*}
	\xymatrix{
    \sheaf{F}_3(U) \cong 0 \ar[rr]^0 \ar[d] && 
    \Zspace \cong \sheaf{F}_3(V) \ar[d]^{\id}\\
    \cosheaf{F}_1(U) \cong \Zspace \oplus \Zspace && 
    \Zspace \cong \cosheaf{F}_1(V). \ar[ll]^-{1 \mapsto (1,0)}
    }
\end{equation*}
The restriction $f |_{f^{-1}(V)} : f^{-1}(V) \to V$ is a fiber bundle.
By Proposition \ref{prop:thom_isomorphism}, $\Ffunc_1(V)$ is an isomorphism. 

Now consider the isobisheaf stack $\Fstack_1$ of the bisheaf $\bisheaf{F}_1$
as constructed in Example~\ref{ex:stack_example}.
For any \'etale open $a : A \to \Rspace^2$ that covers
the origin, $\image \Fstack_1(a) = 0$.
For any \'etale open $b : B \to \Rspace^2$ that avoids the origin,
$\image \Fstack_1(b)$ is the constant colocal system $\Zspace$.


Note that $\Hfunc_1 \big( f^{-1}(0) \big)$ is isomorphic to $\Zspace \oplus \Zspace$
but $\image \Ffunc(U)$ is zero indicating that $\Hfunc_1\big(f^{-1}(0)\big)$ is not stable.
Indeed, we can make an arbitrarily small perturbation to~$f$, with respect to the Euclidean metric on $\Rspace^2$, 
so that the pre-image of the origin is empty.

\end{ex}

\begin{ex}
\label{ex:two}
Let $\Rspace^2$ be the plane parameterized by polar coordinates $(r, \theta)$ and
$\Sstrat$ the stratification of $\Rspace^2$ consisting of the following two strata:
the origin $(0,0)$ is the $0$-stratum and $\Rspace^2 - \{ (0,0) \}$ is the $2$-stratum.
The stratification $\Sstrat$ is a Whitney stratification of the plane thus
admitting control data $(\Rspace^2, \Sstrat, \Jcontrol)$.
Let $\Sspace^1$ be the circle parameterized by $[0, 2\pi)$,
let $X := [0, \infty) \times \Sspace^1 \times \Sspace^1$, and
let $X_0 := \{ 0\} \times \Sspace^1 \times \Sspace^1$.
Define the map
$f : X/X_0 \to \Rspace^2$ as $f(r, \phi, \theta) = (r, \theta)$.
The map $f$ is $(\Sstrat, \Jcontrol)$-constructible.

We now examine the bisheaf $\bisheaf{F}_1$ of $f$ in dimension one
as constructed in Example~\ref{ex:bisheaf}.
Let $V \subseteq U \subseteq \Rspace^2$ be two 
$(\Sstrat, \Jcontrol)$-basic opens
where $U$ is associated to the $0$-stratum and $V$ to the $2$-stratum.
Since all of $\Rspace^2$ is an $\Sstrat$-basic open, 
$\bisheaf{F}_1$ is, by \cite[Proposition 4.11]{curry_patel} and \cite[Theorem 6.1]{curry_patel},
uniquely determined (up to an isomorphism) by the following
commutative diagram:
	\begin{equation*}
	\xymatrix{
    \sheaf{F}_3(U) \cong \Zspace \ar[rr]^\id \ar[d] && 
    \Zspace \cong \sheaf{F}_3(V) \ar[d]^{\id}\\
    \cosheaf{F}_1(U) \cong 0 &&
    \Zspace \cong \cosheaf{F}_1(V). \ar[ll]
    }
	\end{equation*}
The restriction $f |_{f^{-1}(V)} : f^{-1}(V) \to V$ is a fiber bundle.
By Proposition \ref{prop:thom_isomorphism}, $\Ffunc_1(V)$ is an isomorphism. 

Now consider the isobisheaf stack $\Fstack_1$ of the bisheaf $\bisheaf{F}_1$
as constructed in Example~\ref{ex:stack_example}.
For any \'etale open $a : A \to \Rspace^2$ that covers
the origin, $\image \Fstack_1(a) = 0$.
For any \'etale open $b : B \to \Rspace^2$ that avoids the origin,
$\image \Fstack_1(b)$ is the constant persistent colocal system~$\Zspace$.

\end{ex}

\begin{ex} 
Let $X_0$ be the torus parameterized by $[0,2\pi) \times [0, 2 \pi)$
and 
$$D := \{ (r,\theta) \subseteq \Rspace^2 \; | \; r \leq 1 \text{ and } 0 \leq \theta < 2\pi \}$$
the closed disk of radius one.
Once again, we are using polar coordinates to label points in the plane.
Let $x \in X_0$ be the distinguished point $(0,0)$.
Let $A$ and $B$ be two copies of $D$.
Glue the boundary of $A$ to $X_0$ along the map 
$\phi_A : (1,\theta) \to (\theta, 0)$ and
glue the boundary of $B$ to $X_0$ along the map
$\phi_B : (1, \theta) \to (0,\theta)$.
Call the resulting space 
$X := X_0 \cup_{\phi_A} A \cup_{\phi_B} B$.

Let $\Sspace^2 := \Rspace^2 \cup \{ \infty \}$ be the $2$-sphere
with the following stratification.
Let $S_0 \subset \Sspace^2$ be the point $(1,0)$, $S_1 \subset \Sspace^2$ the arc
$\{ (1, \theta) \; | \; 0 < \theta < 2\pi \}$,
$S_2$ the connected component of $\Sspace^2 - S_1$ containing the origin,
and $S_3$ the connected component of $\Sspace^2 - S_1$ containing infinity.
The poset $\Sstrat := \{ S_0, S_1, S_2, S_3 \}$ is a 
Whitney stratification of $\Sspace^2$ thus admitting control data 
$(\Sspace^2, \Sstrat, \Jcontrol)$.
Finally, define $f : X \to \Sspace^2$ as the $(\Sstrat, \Jcontrol)$-constructible 
map that takes $x$ to $S_0$, the interior of $A$ homeomorphically to $S_2$, the interior of $B$ 
homeomorphically to $S_3$, and the torus
$X_0$ to the circle $S_0 \cup S_1$ as shown in Figure \ref{fig:example_3}.
	\begin{figure}
	\centering
	\includegraphics{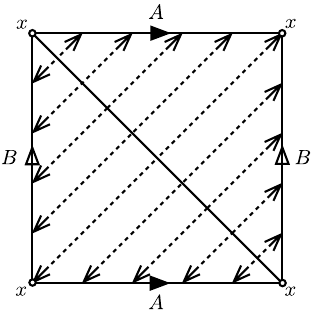}
	\caption{Here we have an illustration of the torus $X_0$ as a square with opposite sides glued.
	The boundary of the disk $A$ is glued to the torus along the horizontal circle and the boundary of $B$ is glued
	to the torus along the vertical circle as indicated.
	The map $f$ takes $x$ to $X_0$ and takes the two circles $(0, \theta)$ and $(\theta, 0)$
	around the circle $S_0 \cup S_1$.
	For a point $p$ in the rest of the torus $X_0$, project~$p$ away from the diagonal to one of the circles
	and apply $f$.
	}
	\label{fig:example_3}
	\end{figure}

Now consider the bisheaf $\bisheaf{F}_0$ of $f$ in dimension zero
as constructed in Example \ref{ex:bisheaf}.
Let $U_0 \subseteq \Sspace^2$ be an $(\Sstrat, \Jcontrol)$-basic open associated to the stratum $S_0$,
$U_1 \subseteq U_0$ an $(\Sstrat, \Jcontrol)$-basic open associated
to the stratum $S_1$, $U_2 \subseteq U_1$ an $(\Sstrat, \Jcontrol)$-basic open
associated to the stratum $S_2$, and $U_3 \subseteq U_1$ an $(\Sstrat, \Jcontrol)$-basic
open associated to the stratum $S_3$.
All four strata are contractible and therefore 
$\bisheaf{F}_0$ is uniquely determined (up to an isomorphism) by
the following commutative diagram:
	\begin{equation*}
    \xymatrix{
    && \sheaf{F}_2 (U_0) \cong \Zspace \oplus \Zspace \ar[lld]^{(1,0)} \ar[d]^{\id} \ar[rrd]^{(1,0)} && \\
    \sheaf{F}_2(U_2) \cong \Zspace \ar[d]^{\id} 
    && \sheaf{F}_2(U_1) \cong \Zspace \oplus \Zspace \ar[d]^{ (1,0) } \ar[ll]^{(1,0)} 
    \ar[rr]^{ (1, 0)}
    && \sheaf{F}_2(U_3) \cong \Zspace \ar[d]^{\id}  \\
    \cosheaf{F}_0(U_2) \cong \Zspace \ar[rr]^{\id} \ar[rrd]^{\id} && 
    \cosheaf{F}_0(U_1) \cong \Zspace \ar[d]^{\id}
    && \cosheaf{F}_0(U_3) \cong \Zspace \ar[ll]^{\id}  \ar[lld]^{\id} \\
    && \cosheaf{F}_0(U_0) \cong \Zspace. &&
    }
    \end{equation*}
The restrictions $f |_{f^{-1}(U_2)} : f^{-1}(U_2) \to U_2$ and $f |_{f^{-1}(U_3)} : f^{-1}(U_3) \to U_3$
are fiber bundles.
By Proposition \ref{prop:thom_isomorphism}, $\Ffunc_0(U_2)$ and $\Ffunc_0(U_3)$ are isomorphisms.
Let $\Fstack_0$ be the isobisheaf stack of $\bisheaf{F}_0$
as constructed in Example~\ref{ex:stack_example}.
For any \'etale open $a : A \to \Sspace^2$, $\image \Fstack_0(a)$
is the constant persistent colocal system $\Zspace$.

We now construct a second constructible map $h : X \to \Sspace^2$.
Let $\Sstrat'$ be the stratification on $\Sspace^2$ consisting of the origin 
as the $0$-stratum $S'_1$ and $\Sspace^2 - S_1'$ as the 2-stratum $S_2'$.
Once again, $(\Sspace^2, \Sstrat')$ is a Whitney stratification and
therefore admits control data $(\Sspace^2, \Sstrat', \Jcontrol')$.
Define $h$ as the map that takes the interior of $B$ homeomorphically 
to $S_2'$ and the rest of $X$ to the origin $S_1'$.
Now consider the bisheaf $\bisheaf{H}_0$ of $h$ in dimension $0$
as constructed in Example \ref{ex:bisheaf}.
Let $U \subseteq \Sspace^2$ be an $\Sstrat$-basic open associated
to $S_1'$ and $V \subseteq U$ an $\Sstrat$-basic open associated
to $S_2'$.
Then $\bisheaf{H}_0$ is uniquely determined (up to an isomorphism) by
the following commutative diagram:
\begin{equation*}
    \xymatrix{
    \sheaf{H}_2(U) \cong \Zspace \ar[rr]^0 \ar[d]^{0} && 
    \sheaf{H}_2(V) \cong \Zspace \ar[d]^{\id} \\
    \cosheaf{H}_0(U) \cong \Zspace  && 
    \cosheaf{H}_0(V) \cong \Zspace.
    \ar[ll]^{\id}
    }
    \end{equation*}
The support of a class in $\sheaf{H}_2(U)$ is the torus $X_0$
whereas the support of a class in $\sheaf{H}_2(V)$ is in the interior of $B$.
Thus the top horizontal map is zero.
The left vertical map is the cap product with the pullback of the orientation
to $\cosheaf{H}^2(U)$.
This pullback is zero making the cap product zero.
Let $\Hstack_0$ be the isobisheaf stack of $\bisheaf{H}_0$
as constructed in Example~\ref{ex:stack_example}.
For any \'etale open $a : A \to \Sspace^2$ that covers the origin,
$\image \Hstack_0(a)$ is zero because the cap product over $U$ is zero.
This zero is explained by the fact that we may perturb $h$
by an arbitrarily small amount, with respect to any metric on $\Sspace^2$
as in the proof of Theorem \ref{thm:main}, so that the pre-image
of the origin is empty.

By Theorem \ref{thm:main}, $f$ has an open neighborhood $W \subseteq \mathfrak{W}(X, \Sspace^2)$ 
such that for each map $g \in W$ their bisheaves 
are related by canonical bisheaf maps 
$$\bisheaf{F}_0 \leftarrow \Sigma^\star \bisheaf{F}_0 \to \bisheaf{G}_0.$$
By Corollary \ref{corr:main}, their isobisheaf stacks are related by
canonical stack maps
$$\Fstack_0 \leftarrow \Sigma^\star \Fstack_0 \to \Gstack_0.$$
Consider the \'etale open $\id : \Sspace^2 \to \Sspace^2$.
The shrinking of $\id$ is $\id$ itself.
This means that the persistent colocal system
$\image \Fstack_0(\id)$ is a subquotient of $\image \Gstack_0(\id)$.
However, $\image \Hstack_0(\id) = 0$ and 
therefore $h$ cannot be in the open set $W$.
Our stability theorem is inherently local.
\end{ex}


\bibliographystyle{acm}
\bibliography{main}

\appendix

\newpage

\section{Sheafification}
\label{sec:sheafification}

Fix a stratified space $(X, \Sstrat)$ as in Definition \ref{defn:stratifed_space}.
The poset $\Basic(X, \Sstrat)$ of all $\Sstrat$-basic opens is a basis for the topology on $X$.
Choose a sub-basis $\Acat \subseteq \Basic(X, \Sstrat)$ and consider a
contravariant functor $\Ffunc :  \Acat \to \Ab$ such that for every pair $J \subseteq I$
of $\Sstrat$-basic opens associated to a common stratum, the map $\Ffunc(J \subseteq I)$
is an isomorphism.
We call such a contravariant functor $\Ffunc$ an \emph{$\Sstrat$-constructible contravariant functor}.
Given such an~$\Ffunc$, we may extend it to an $\Sstrat$-constructible sheaf $\sheaf{F}$ over $X$
as follows.
For any open set $U \subseteq X$, denote by $\Acat(U) \subseteq \Acat$ be the subposet consisting of all open
sets contained in $U$.
Let $\sheaf{F}(U) := \lim \Ffunc |_{\Acat(U)}$.
For every pair of open sets $V \subseteq U$, the inclusion $\Acat(V) \subseteq \Acat(U)$ induces a 
canonical map $\sheaf{F}(V \subseteq U)$ between the two limits.
In this section, we use the equivalence between the category of $\Sstrat$-constructible sheaves
$\Sheaf(X, \Sstrat)$ and the category of functors $\big[ \Exit(X, \Sstrat), \Ab \big]$
from the exit path category $\Exit(X,\Sstrat)$ to show that $\sheaf{F}$ is an 
$\Sstrat$-constructible sheaf as in see Definition~\ref{defn:constructible_sheaf}.

An \emph{exit path} in $(X, \Sstrat)$ is a continuous map $\gamma : [0,1] \to X$ such that the dimension
of the stratum containing the point $\gamma(t)$ is non-decreasing with increasing~$t$.
Two exit paths $\alpha , \beta : [0,1] \to X$, where $\alpha(0) = \beta(0)$ and $\alpha(1) = \beta(1)$,
are \emph{equivalent} if, roughly speaking, there is a homotopy of exit paths taking $\alpha$ to $\beta$.
See \cite[Definition 4.5]{curry_patel} for a precise definition of equivalence in the dual setting of entrance paths, 
\cite[Definition 7.4]{treumann} for equivalence in the $2$-category setting, 
and \cite[Definition A.6.2]{higher_algebra} for equivalence in the $\infty$-category setting.
The \emph{exit path category} $\Exit(X, \Sstrat)$ consists of points of $X$ as objects 
and a morphism from $x$ to $x'$ is an equivalence class of exit paths starting at $x$ and ending at~$x'$.

The category $\Sheaf(X, \Sstrat)$ of $\Sstrat$-constructible sheaves over $X$ is equivalent to the 
category of functors $\big [ \Exit(X, \Sstrat), \Ab \big]$.
See \cite[Theorem 6.1]{curry_patel} for 
the equivalence in the dual setting of constructible cosheaves, \cite[Theorem 7.14]{treumann} for the $2$-category setting, 
and \cite[Theorem A.9.3]{higher_algebra} for the $\infty$-category setting.
There are two functors 
	\begin{equation*}
	\begin{tikzcd}
	\Sheaf(X, \Sstrat) \ar[rr, bend left, "\Phi"] && \big [ \Exit(X, \Sstrat), \Ab \big] \ar[ll, "\Psi", bend left]
	\end{tikzcd}
	\end{equation*}in this equivalence, but we only
need $\Psi$, which we now describe on an object $\Hfunc : \Exit(X, \Sstrat) \to \Ab$.
For an open set $U \subseteq X$, denote by $\Exit(U, \Sstrat |_U)$ the full subcategory
of $\Exit(X, \Sstrat)$ restricted to the stratified space $(U, \Sstrat |_U)$.
Then $\Psi(\Hfunc)(U) := \lim_{\Exit(U, \Sstrat |_U)} \Hfunc$.
For every pair of open sets $V \subseteq U$, $\Psi(\Hfunc)(V \subseteq U)$ is the universal
homomorphism between the two limits.

\begin{prop}
Let $\Acat \subseteq \Basic(X, \Sstrat)$ be a sub-basis and $\Ffunc : \Acat \to \Ab$ 
an $\Sstrat$-constructible contravariant functor.
Then the contravariant functor $\sheaf{F} : \Open(X) \to \Ab$, as constructed above from $\Ffunc$,
is an $\Sstrat$-constructible sheaf.
\end{prop}

\begin{proof}
First, we construct a functor $\Hfunc : \Exit(X, \Sstrat) \to \Ab$ from $\Ffunc$.
Next, we show that the two contravariant functors $\Psi(\Hfunc), \sheaf{F} : \Open(X) \to \Ab$ 
are naturally isomorphic thus proving $\sheaf{F}$ is an $\Sstrat$-constructible sheaf. 

For a point $x \in X$, let $\Hfunc(x)$ be the colimit of $\Ffunc$ over all open sets containing~$x$.
For an open set $I \in \Acat$ containing $x$, denote by $\Hfunc(x \in I) : \Ffunc(I) \to \Hfunc(x)$
the canonical homomorphism to the colimit.
Note that if $I$ is associated to the stratum containing~$x$, then $\Hfunc(x \in I)$ is an isomorphism.
Now consider an exit path $\alpha$ in $(X, \Sstrat)$ that meets at most two strata
and satisfies the following condition.
Suppose the point $\alpha(0)$ lies on a stratum $S_1 \in \Sstrat$ and the point $\alpha(1)$
lies on a stratum $S_2 \in \Sstrat$.
Then we require that there is an $\Sstrat$-basic open $I \in \Acat$ associated to $S_1$ containing
the path~$\alpha$.
Define $\Hfunc(\alpha) : \Hfunc(\alpha(0)) \to \Hfunc(\alpha(1))$ 
as the following composition:
	\begin{equation*}
	\begin{tikzcd}
	\Hfunc(\alpha(0)) \ar[rr, "\Hfunc( \alpha(0) \in I )^{-1}", "\cong"']
	&& \Ffunc(I) \ar[rr, "\Hfunc(\alpha(1) \in I)"]
	&& \Hfunc(\alpha(1)).
	\end{tikzcd}
	\end{equation*}
The homomorphism $\Hfunc(\alpha)$ is independent of the choice of $I$; see proof of
\cite[Theorem 6.1]{curry_patel}.
An arbitrary exit path $\beta$ in $(X, \Sstrat)$ can be written as a composition of simpler exit paths
each satisfying the conditions imposed on $\alpha$ above.
The homomorphism $\Hfunc(\beta)$ is simply the composition of homomorphisms
associated to each of these simpler exit paths.
Furthermore, if $\beta'$ is a second exit path equivalent to $\beta$, then
$\Hfunc(\beta) = \Hfunc(\beta')$; see proof of \cite[Theorem 6.1]{curry_patel}.

We now build a natural isomorphism $\mu : \Psi(\Hfunc) \Rightarrow  \sheaf{F}$.
Suppose $I \in \Acat$ is associated to a stratum $S \in \Sstrat$.
Then, by \cite[Proposition 4.11]{curry_patel}, every point $x \in I \cap S$ is initial in $\Exit(I , \Sstrat |_I)$.
This implies $\Psi(\Hfunc)(I)$ is canonically isomorphic to $\Hfunc(x)$, which is canonically
isomorphic to $\Ffunc(I)$, which is canonically isomorphic to $\sheaf{F}(I)$.
Let $\mu(I) : \Psi(\Hfunc)(I) \to \sheaf{F}(I)$ be this canonical isomorphism.
For every pair $J \subseteq I$ in $\Acat$, there is, by \cite[Proposition 4.11]{curry_patel}, 
a unique exit path~$\alpha$ in $I$ from $x \in I \cap S$ to
$y \in J \cap S'$, where $S'$ is the stratum associated to $J$.
Combined with the fact that both $x$ and $y$ are initial in $\Exit(I, \Sstrat |_I)$ and $\Exit(J, \Sstrat |_J)$,
respectively, the following diagram commutes:
	\begin{equation*}
	\begin{tikzcd}
	\Psi(\Hfunc)(I)  \ar[d, "\mu(I)", "\cong"'] \ar[rr, "\Psi(\Hfunc)(J \subseteq I)"] && 
	\Psi(\Hfunc)(J) \ar[d, "\mu(J)", , "\cong"'] \\
	\sheaf{F}(I) \ar[rr, "\sheaf{F}(J \subseteq I)"] && \sheaf{F}(J).
	\end{tikzcd}
	\end{equation*}
Now let $U \subseteq X$ be an arbitrary open set.
Since $\Psi(\Hfunc)$ satisfies the gluing axiom and $\Acat(U)$ is an open cover of $U$, 
the universal homomorphism
$\Psi(\Hfunc)(U) \to \lim \Psi(\Hfunc) |_{\Acat(U)}$ is an isomorphism.
Furthermore, there is a canonical isomorphism $\lim \Psi(\Hfunc) |_{\Acat(U)} \to \sheaf{F}(U)$
because both groups are limits over naturally isomorphic diagrams $\Psi(\Hfunc) |_{\Acat(U)}$ and
$\sheaf{F} |_{\Acat(U)}$, respectively.
Thus for every pair of open sets $V \subseteq U$, the following diagram commutes:
	\begin{equation*}
	\begin{tikzcd}
	\Psi(\Hfunc)(U)  \ar[d, "\mu(U)", , "\cong"'] \ar[rr, "\Psi(\Hfunc)(V \subseteq U)"] && 
	\Psi(\Hfunc)(V) \ar[d, "\mu(V)", , "\cong"'] \\
	\sheaf{F}(U) \ar[rr, "\sheaf{F}(V \subseteq U)"] && \sheaf{F}(V).
	\end{tikzcd}
	\end{equation*}
Therefore, $\sheaf{F}$ is canonically isomorphic to $\Psi(\Hfunc)$.

\end{proof}

\newpage
\section{Cosheafification}
\label{sec:cosheafification}

Fix a stratified space $(X, \Sstrat)$ as in Definition \ref{defn:stratifed_space}.
The poset $\Basic(X, \Sstrat)$ of all $\Sstrat$-basic opens is a basis for the topology on $X$.
Choose a sub-basis $\Acat \subseteq \Basic(X, \Sstrat)$ and consider a
covariant functor $\Ffunc :  \Acat \to \Ab$ such that for every pair $J \subseteq I$
of $\Sstrat$-basic opens associated to a common stratum, the map $\Ffunc(J \subseteq I)$
is an isomorphism.
We call such a covariant functor $\Ffunc$ an \emph{$\Sstrat$-constructible covariant functor}.
Given such an~$\Ffunc$, we may extend it to an $\Sstrat$-constructible cosheaf $\cosheaf{F}$ under $X$
as follows.
For any open set $U \subseteq X$, denote by $\Acat(U) \subseteq \Acat$ be the subposet consisting of all open
sets contained in $U$.
Let $\cosheaf{F}(U) := \colim \Ffunc |_{\Acat(U)}$.
For every pair of open sets $V \subseteq U$, the inclusion $\Acat(V) \subseteq \Acat(U)$ induces a 
canonical map $\cosheaf{F}(V \subseteq U)$ between the two colimits.
In this section, we use the equivalence between the category of $\Sstrat$-constructible cosheaves
$\Cosheaf(X, \Sstrat)$ and the category of functors $\big[ \Ent(X, \Sstrat), \Ab \big]$
from the entrance path category $\Ent(X,\Sstrat)$ to show that $\cosheaf{F}$ is an 
$\Sstrat$-constructible cosheaf as in Definition~\ref{defn:constructible_sheaf}.
The entrance path category $\Ent(X, \Sstrat)$ is simply the opposite of the exit path
category $\Exit(X, \Sstrat)$ from Appendix \ref{sec:sheafification}.

The category $\Cosheaf(X, \Sstrat)$ of $\Sstrat$-constructible cosheaves under $X$ is equivalent to the 
category of functors $\big [ \Ent(X, \Sstrat), \Ab \big]$;
see \cite[Theorem 6.1]{curry_patel}.
There are two functors 
	\begin{equation*}
	\begin{tikzcd}
	\Cosheaf(X, \Sstrat) \ar[rr, bend left, "\Phi"] && \big [ \Ent(X, \Sstrat), \Ab \big] \ar[ll, "\Psi", bend left]
	\end{tikzcd}
	\end{equation*}
in this equivalence, but we only
need $\Psi$, which we now describe on an object $\Hfunc : \Ent(X, \Sstrat) \to \Ab$.
For an open set $U \subseteq X$, denote by $\Ent(U, \Sstrat |_U)$ the full subcategory
of $\Ent(X, \Sstrat)$ restricted to the stratified space $(U, \Sstrat |_U)$.
Then $\Psi(\Hfunc)(U) := \colim_{\Ent(U, \Sstrat |_U)} \Hfunc$.
For every pair of open sets $V \subseteq U$, $\Psi(\Hfunc)(V \subseteq U)$ is the universal
homomorphism between the two colimits.

\begin{prop}
Let $\Acat \subseteq \Basic(X, \Sstrat)$ be a sub-basis and $\Ffunc : \Acat \to \Ab$ 
an $\Sstrat$-constructible covariant functor.
Then the covariant functor $\cosheaf{F} : \Open(X) \to \Ab$, as constructed above from $\Ffunc$,
is an $\Sstrat$-constructible cosheaf.
\end{prop}

\begin{proof}
First, we construct a functor $\Hfunc : \Ent(X, \Sstrat) \to \Ab$ from $\Ffunc$.
Next, we show that the two covariant functors $\Psi(\Hfunc), \cosheaf{F} : \Open(X) \to \Ab$ 
are naturally isomorphic thus proving $\cosheaf{F}$ is an $\Sstrat$-constructible cosheaf.

We now construct a functor $\Hfunc : \Ent(X, \Sstrat) \to \Ab$ from the covariant 
functor~$\Ffunc$ given above.
For a point $x \in X$, let $\Hfunc(x)$ be the limit of $\Ffunc$ over all open sets containing~$x$.
For an open set $I \in \Acat$ containing $x$, denote by $\Hfunc(x \in I) : \Hfunc(x) \to \Ffunc(I)$
the canonical homomorphism from the limit.
Note that if $I$ is associated to the stratum containing~$x$, then $\Hfunc(x \in I)$ is an isomorphism.
Now consider an entrance path $\alpha$ in $(X, \Sstrat)$ that meets at most two strata
and satisfies the following condition.
Suppose the point $\alpha(0)$ lies on a stratum $S_2 \in \Sstrat$ and the point $\alpha(1)$
lies on a stratum $S_1 \in \Sstrat$.
Then we require that there is an $\Sstrat$-basic open $I \in \Acat$ associated to $S_1$ containing
the path~$\alpha$.
Define $\Hfunc(\alpha) : \Hfunc(\alpha(0)) \to \Hfunc(\alpha(1))$ 
as the following composition
	\begin{equation*}
	\begin{tikzcd}
	\Hfunc(\alpha(0)) \ar[rr, "\Hfunc( \alpha(0) \in I )"]
	&& \Ffunc(I) \ar[rr, "\Hfunc(\alpha(1) \in I)^{-1}", "\cong"']
	&& \Hfunc(\alpha(1)).
	\end{tikzcd}
	\end{equation*}
The homomorphism $\Hfunc(\alpha)$ is independent of the choice of $I$; see proof of
\cite[Theorem 6.1]{curry_patel}.
An arbitrary entrance path $\beta$ in $(X, \Sstrat)$ can be written as a composition of simpler entrance paths
each satisfying the conditions imposed on $\alpha$ above.
The homomorphism $\Hfunc(\beta)$ is simply the composition of homomorphisms
associated to each of these simpler entrance paths.
Furthermore, if $\beta'$ is a second entrance path equivalent to $\beta$, then
$\Hfunc(\beta) = \Hfunc(\beta')$; see proof of \cite[Theorem 6.1]{curry_patel}.

We now build a natural isomorphism $\mu : \cosheaf{F} \Rightarrow \Psi(\Hfunc)$.
Suppose $I \in \Acat$ is associated to a stratum $S \in \Sstrat$.
Then, by \cite[Proposition 4.11]{curry_patel}, every point $x \in I \cap S$ is final in $\Ent(I , \Sstrat |_I)$.
This implies $\Psi(\Hfunc)(I)$ is canonically isomorphic to $\Hfunc(x)$, which is canonically
isomorphic to $\Ffunc(I)$, which is canonically isomorphic to $\cosheaf{F}(I)$. 
Let $\mu(I) : \cosheaf{F}(I) \to \Psi(\Hfunc)(I)$ be this canonical isomorphism.
For every pair $J \subseteq I$ in $\Acat$, there is, by \cite[Proposition 4.11]{curry_patel}, 
a unique entrance path~$\alpha$ in $I$ from $y \in J \cap S'$ to
$x \in I \cap S$, where $S'$ is the stratum associated to $J$.
Combined with the fact that both $x$ and $y$ are final in $\Ent(I, \Sstrat |_I)$ and $\Ent(J, \Sstrat |_J)$,
respectively, the following diagram commutes:
	\begin{equation*}
	\begin{tikzcd}
	\cosheaf{F}(J)  \ar[d, "\mu(J)", "\cong"']  \ar[rr, "\cosheaf{F}(J \subseteq I)"] && \cosheaf{F}(I)  
	\ar[d, "\mu(I)", , "\cong"'] \\
	\Psi(\Hfunc)(J) \ar[rr, "\Psi(\Hfunc)(J \subseteq I)"] && \Psi(\Hfunc)(I).
	\end{tikzcd}
	\end{equation*}
Now let $U \subseteq X$ be an arbitrary open set.
Since $\Psi(\Hfunc)$ satisfies the gluing axiom and $\Acat(U)$ is an open cover of $U$, 
the universal homomorphism
$\colim \Psi(\Hfunc) |_{\Acat(U)} \to \Psi(\Hfunc)(U)$ is an isomorphism.
Furthermore, there is a canonical isomorphism $\colim \Psi(\Hfunc) |_{\Acat(U)} \to \cosheaf{F}(U)$
because both groups are colimits over naturally isomorphic diagrams $\Psi(\Hfunc) |_{\Acat(U)}$ and
$\cosheaf{F} |_{\Acat(U)}$, respectively.
Thus for every pair of open sets $V \subseteq U$, the following diagram commutes:
	\begin{equation*}
	\begin{tikzcd}
	\cosheaf{F}(V)  \ar[d, "\mu(V)", , "\cong"']  \ar[rr, "\cosheaf{F}(V \subseteq U)"] && \cosheaf{F}(U)  
	\ar[d, "\mu(U)", , "\cong"'] \\
	\Psi(\Hfunc)(V) \ar[rr, "\Psi(\Hfunc)(V \subseteq U)"] && \Psi(\Hfunc)(U).
	\end{tikzcd}
	\end{equation*}
Therefore, $\cosheaf{F}$ is canonically isomorphic to $\Psi(\Hfunc)$.

\end{proof}

\end{document}